\documentclass[]{article}

\usepackage[a4paper, portrait, margin=1.1811in]{geometry}
\usepackage[english]{babel}
\usepackage[utf8]{inputenc}
\usepackage[T1]{fontenc}
\usepackage{helvet}
\usepackage{etoolbox}
\usepackage{graphicx}
\usepackage{titlesec}
\usepackage[font=footnotesize]{caption}
\usepackage{booktabs}
\usepackage{xcolor} 
\usepackage[colorlinks, citecolor=teal, linkcolor=teal, urlcolor=teal]{hyperref}
\usepackage{caption}
\graphicspath{ {./images/} }
\usepackage{scrextend}
\usepackage{fancyhdr}
\usepackage{authblk}
\usepackage{multicol}
\usepackage{multirow}
\usepackage{subfig}
\usepackage{float}

\usepackage{changes}
\usepackage{longtable}

\usepackage{amsthm}
\usepackage{amsmath}
\usepackage{amssymb}
\usepackage{amsfonts}
\usepackage{mathtools}

\usepackage{xcolor}
\usepackage[linesnumbered,ruled,vlined]{algorithm2e}
\SetKwComment{Comment}{/* }{ */}

\usepackage{makeidx}
\usepackage[acronym, nonumberlist, style=long, toc]{glossaries}

\usepackage{xkeyval, xfor, amsgen, array}
\makeglossaries

\newacronym{Adam}{Adam}{Adaptive Moment Estimation}
\newacronym[longplural={Artifical Neural Networks}, \glsshortpluralkey={ANNs}]{ANN}{ANN}{Artificial Neural Network}

\newacronym{bagging}{bagging}{Bootstrap Aggregating}

\newacronym{CDF}{CDF}{Cumulative Distribution Function}
\newacronym{CII}{CII}{Carbon Intensity Indicator}
\newacronym[longplural={Confidence Intervals}, \glsshortpluralkey={CIs}]{CI}{CI}{Confidence Interval}
\newacronym{COG}{COG}{Course Over Ground}
\newacronym{CSO}{CSO}{Cleaning Schedule Optimization}

\newacronym{$R^2$}{$\text{R}^2$}{Coefficient of Determination}
\newacronym[longplural={Decision Trees}, \glsshortpluralkey={DTs}]{DT}{DT}{Decision Tree}

\newacronym{DDM}{DDM}{Dry Dock Maintenance}

\newacronym{DSC}{DSC}{Days Since Cleaning}
\newacronym{DSDDM}{DSDDM}{Days Since Dry Dock Maintenance}
\newacronym{DSIWS}{DSIWS}{Days Since In Water Survey}
\newacronym{ET}{ET}{Extremely Randomized Trees}
\newacronym{EEOI}{EEOI}{Energy Efficiency Operating Indicator}

\newacronym{FOC}{FOC}{Fuel Oil Consumption}

\newacronym[longplural={Green House Gases}, \glsshortpluralkey={GHGs}]{GHG}{GHG}{Green House Gas}
\newacronym{GUI}{GUI}{Graphical User Interface}

\newacronym{HFO}{HFO}{Heavy Fuel Oil}
 
\newacronym{IMO}{IMO}{International Maritime Organization}
\newacronym[longplural={In-Water Surveys}, \glsshortpluralkey={IWSs}]{IWS}{IWS}{In Water Survey}

\newacronym{kNN}{kNN}{k-Nearest Neighbors}

\newacronym{LASSO}{LASSO}{Least Absolute Shrinkage and Selection Operator}
\newacronym{LinReg}{LinReg}{Linear Regression}
\newacronym{LFO}{LFO}{Light Fuel Oil}
\newacronym{LSHFO}{LSHFO}{Low Sulfur Heavy Fuel Oil}

\newacronym{ML}{ML}{Machine Learning}

\newacronym{MGO}{MGO}{Marine Gasoil}

\newacronym{MLP}{MLP}{Multilayer Perceptron}

\newacronym{MAE}{MAE}{Mean Absolute Error}
\newacronym{MAPE}{MAPE}{Mean Absolute Percentage Error}

\newacronym{MRV}{MRV}{Monitoring, Reporting, and Verification}

\newacronym{MSE}{MSE}{Mean Squared Error}

\newacronym{MedAE}{MedAE}{Median Absolute Error}

\newacronym[longplural={Principal Components}, \glsshortpluralkey={PCs}]{PC}{PC}{Principal Component}
\newacronym{PCA}{PCA}{Principal Component Analysis}

\newacronym{RF}{RF}{Random Forest}

\newacronym{RMSE}{RMSE}{Root Mean Squared Error}
\newacronym{RMSPE}{RMSPE}{Root Mean Squared Percentage Error}

\newacronym{SHAP}{SHAP}{SHapley Additive exPlanations}
\newacronym{STW}{STW}{Speed Through Water}
\newacronym{SOG}{SOG}{Speed Over the Ground}
\newacronym{SVM}{SVM}{Support Vector Machine}
\newacronym{SVR}{SVR}{Support Vector Regression}

\newacronym{WUK}{WUK}{Water Under Keel}

\newacronym{XGB}{XGB}{eXtreme Gradient Boosting}

\newacronym{XAI}{XAI}{eXplainable Artificial Intelligence}

\usepackage{enumitem,amssymb}
\newlist{todolist}{itemize}{2}
\setlist[todolist]{label=$\square$}
\usepackage{pifont}

\usepackage{natbib}   
\setcitestyle{authoryear, open={((},close={)}}

\usepackage[framemethod=TikZ]{mdframed}
\usepackage{comment}

\usepackage{orcidlink}
\usepackage[misc]{ifsym}
\usepackage{fontawesome}

\usepackage{chngcntr}

\usepackage{lmodern}
\usepackage[bottom]{footmisc}

\usepackage{fancyvrb}
\usepackage{listings}
\usepackage{inconsolata}

\definecolor{lightgray}{gray}{0.95}
\definecolor{darkgray}{gray}{0.4}
\definecolor{keywordblue}{rgb}{0.26, 0.26, 0.8}
\definecolor{stringred}{rgb}{0.7, 0.0, 0.0}
\definecolor{specialorange}{RGB}{230,120,20}

\lstdefinestyle{custom}{
    backgroundcolor=\color{lightgray},
    basicstyle=\ttfamily\scriptsize,
    keywordstyle=\color{keywordblue}\bfseries,
    commentstyle=\color{gray},
    stringstyle=\color{stringred},
    showstringspaces=false,
    breaklines=true,
    frame=single,
    rulecolor=\color{darkgray},
    framerule=0.5pt,
    xleftmargin=0.5em,
    xrightmargin=0.5em,
    keepspaces=true,
    columns=fullflexible,
    tabsize=4,
    gobble=0,
    upquote=false,
    morekeywords={None,NaN,nan,True,true,False,false}, 
    emph={None,NaN,nan,True,true,False,false},
    emphstyle=\color{specialorange}
}

\theoremstyle{definition}

\SetCommentSty{mycommfont}

\SetKwInput{KwInput}{Input}                
\SetKwInput{KwOutput}{Output}              

\newcounter{defi}[section]
\renewcommand{\thedefi}{\arabic{section}.\arabic{defi}}
\newenvironment{defi}[2][]{%
\refstepcounter{defi}%
\ifstrempty{#1}%
{\mdfsetup{%
frametitle={%
\tikz[baseline=(current bounding box.east),outer sep=0pt]
\node[anchor=east,rectangle,fill=gray!20]
{\strut Definition~\thedefi};}}
}%
{\mdfsetup{%
frametitle={%
\tikz[baseline=(current bounding box.east),outer sep=0pt]
\node[anchor=east,rectangle,fill=gray!20]
{\strut Definition~\thedefi:~\textbf{#1}};}}%
}%
\mdfsetup{innertopmargin=10pt,linecolor=gray!20,%
linewidth=2pt,topline=true,%
frametitleaboveskip=\dimexpr-\ht\strutbox\relax
}
\begin{mdframed}[]\relax%
\label{#2} \vspace{-0.3cm}}{\end{mdframed}}

\newcounter{theo}[section]

\renewcommand{\thetheo}{\arabic{section}.\arabic{theo}}
\newenvironment{theo}[2][]{%
\refstepcounter{theo}%
\ifstrempty{#1}%
{\mdfsetup{%
frametitle={%
\tikz[baseline=(current bounding box.east),outer sep=0pt]
\node[anchor=east,rectangle,fill=gray!20]
{\strut Theorem~\thetheo};}}
}%
{\mdfsetup{%
frametitle={%
\tikz[baseline=(current bounding box.east),outer sep=0pt]
\node[anchor=east,rectangle,fill=gray!20]
{\strut Theorem~\thetheo:~#1};}}%
}%
\mdfsetup{innertopmargin=10pt,linecolor=gray!20,%
linewidth=2pt,topline=true,%
frametitleaboveskip=\dimexpr-\ht\strutbox\relax
}
\begin{mdframed}[]\relax%
\label{#2} \vspace{-0.3cm}}{\end{mdframed}}

\newcounter{prf}[section]

\newenvironment{prf}[2][]{%
\refstepcounter{prf}%
\ifstrempty{#1}%
{\mdfsetup{%
frametitle={%
\tikz[baseline=(current bounding box.east),outer sep=0pt]
\node[anchor=east,rectangle,fill=gray!20]
{\strut Proof};}}
}%
{\mdfsetup{%
frametitle={%
\tikz[baseline=(current bounding box.east),outer sep=0pt]
\node[anchor=east,rectangle,fill=gray!20]
{\strut Proof:~#1};}}%
}%
\mdfsetup{innertopmargin=10pt,linecolor=gray!20,%
linewidth=2pt,topline=true,%
frametitleaboveskip=\dimexpr-\ht\strutbox\relax
}
\begin{mdframed}[]\relax%
\label{#2} \vspace{-0.3cm}}{\qed\end{mdframed}}

\SetKwInput{KwInput}{Input}

\begin{document}

\pagestyle{fancy}
\lhead{Ward et al.}
\rhead{Biofouling Cleaning Schedule Optimization}
\setlength{\headheight}{32pt}

\newpage
\setcounter{page}{1}
\renewcommand{\thepage}{\arabic{page}}
	
\captionsetup[figure]{labelfont={bf},labelformat={default},labelsep=period,name={Figure }}	\captionsetup[table]{labelfont={bf},labelformat={default},labelsep=period,name={Table}}
\setlength{\parskip}{0.5em}



\fancypagestyle{plain}{
	\fancyhf{}
	\renewcommand{\headrulewidth}{0pt}
	\renewcommand{\familydefault}{\sfdefault}
	
	\lhead{\color{teal}\small \textbf{Preprint} (v1)\\ \color{black}
	\text{Operational Research Group, University of Southampton}\\ }
	
}

\makeatletter
\patchcmd{\@maketitle}{\LARGE \@title}{\fontsize{20}{22}\selectfont\@title}{}{}
\makeatother


\setlength{\affilsep}{2em}  
\newsavebox\affbox

\author{
    {\large \textbf{Samuel Ward}} {\normalsize \href{mailto:s.ward@soton.ac.uk}{\Letter} \href{https://www.southampton.ac.uk/people/5zv8fk/mr-samuel-ward}{\faHome} {\large \orcidlink{0009-0001-3084-7099}} \protect\\ \vspace{0.1cm}
     School of Mathematical Sciences, \protect\\ University of Southampton, \protect\\  SO17 1BJ Southampton, UK}  \vspace{0.2cm}
     
    {\large \textbf{Marah-Lisanne Thormann}} {\normalsize \href{mailto:m.-l.thormann@soton.ac.uk}{\Letter}  \href{https://www.southampton.ac.uk/people/5ztphy/ms-marah-thormann}{\faHome} \protect\\ \vspace{0.1cm}
     School of Mathematical Sciences, \protect\\ University of Southampton, \protect\\  SO17 1BJ Southampton, UK} \\ \vspace{-0.3cm}

    {\large \textbf{Julian Wharton}} {\normalsize \href{mailto:J.A.Wharton@soton.ac.uk}{\Letter} \href{https://www.southampton.ac.uk/people/5wy6z8/professor-julian-wharton}{\faHome} {\large \orcidlink{0000-0002-3439-017X
}} \protect\\
    \vspace{0.1cm} National Centre for Advanced Tribology at Southampton (nCATS), \protect\\ University of Southampton, \protect\\  SO17 1BJ Southampton, UK} \vspace{0.2cm}
    
    {\large \textbf{Alain Zemkoho}} {\normalsize \href{mailto:a.b.zemkoho@soton.ac.uk}{\Letter} \href{https://www.southampton.ac.uk/~abz1e14/index.html}{\faHome} {\large \orcidlink{0000-0003-1265-4178}} \protect\\
     \vspace{0.1cm} School of Mathematical Sciences, \protect\\University of Southampton, \protect\\  SO17 1BJ Southampton, UK} 
}




\titlespacing\section{0pt}{12pt plus 4pt minus 2pt}{0pt plus 2pt minus 2pt}
\titlespacing\subsection{12pt}{12pt plus 4pt minus 2pt}{0pt plus 2pt minus 2pt}
\titlespacing\subsubsection{12pt}{12pt plus 4pt minus 2pt}{0pt plus 2pt minus 2pt}

\titleformat{\section}{\normalfont\fontsize{10}{15}\bfseries}{\thesection.}{1em}{}
\titleformat{\subsection}{\normalfont\fontsize{10}{15}\bfseries}{\thesubsection.}{1em}{}
\titleformat{\subsubsection}{\normalfont\fontsize{10}{15}\bfseries}{\thesubsubsection.}{1em}{}


\title{\vspace{-0.6cm} \textbf{Data-Driven Hull-Fouling Cleaning Schedule Optimization to Reduce  Carbon Footprint of Vessels} \vspace{0.2cm}}

\date{}  

\begingroup
\let\center\flushleft
\let\endcenter\endflushleft

\maketitle

\vspace{-0.8cm}

	

\vspace{0.2cm}

\noindent\rule{1.25cm}{0.4pt}  \rlap{\color{gray}\vrule}%
  \fboxsep1.5mm\colorbox[rgb]{1,1,1}{\raisebox{-0.4ex}{%
    \large\selectfont\sffamily\bfseries\abstractname}} \noindent\rule{11.5cm}{0.4pt}

    \noindent
    In response to climate change, the International Maritime Organization has introduced regulatory frameworks to reduce greenhouse gas emissions from international shipping. 
    Compliance~with these regulations is increasingly expected from individual shipping companies, compelling vessel operators to lower the $\text{CO}_2$ emissions of their fleets while maintaining economic viability.
    An important step towards achieving this is performing regular hull and propeller cleaning; however, this entails significant costs.
    As a result, assessing whether ship performance has declined sufficiently to warrant cleaning from an environmental and economic standpoint is a critical task to ensure both long-term viability and regulatory compliance. 
    In this paper, we address this challenge by proposing a novel data-driven dynamic programming approach to optimize vessel cleaning schedules by balancing both environmental and economic considerations. 
    In numerical experiments, we demonstrate the usefulness of our proposed methodology based on real-world sensor data from ten tramp trading vessels. 
    The results confirm that over a four-year period, fuel consumption can be reduced by up to 5\%, even when accounting for the costs of one or two additional cleaning events.
    \\ \\
    \textbf{Keywords}: Biofouling; Cleaning Schedule Optimization; Dynamic Programming; Fuel Consumption; Machine Learning
        
    \vspace{0.2cm}
  
    \noindent \textbf{Mathematics Subject Classification (2020)}: 90B06; 90B25; 90C27; 90C39

    \vspace*{\fill}

    \noindent \rule{15cm}{0.2pt}
    {\footnotesize \noindent 
    \textbf{Funding Acknowledgement}: 
    The work presented in this paper was funded by both the SMMI Research Collaboration Stimulus Fund 2022/23 and an EPSRC Impact Acceleration Account grant with reference 9060958. 
    Samuel Ward's PhD is jointly funded by DAS Ltd and the School of Mathematical Sciences at the University of Southampton.
    Marah-Lisanne Thormann receives a PhD Studentship from the School of Mathematical Sciences at the University of Southampton. 
    Alain Zemkoho is supported by the \href{https://www.ukri.org/councils/epsrc/}{EPSRC} with grant reference EP/X040909/1. }

\endgroup

\newpage
\renewcommand{\baselinestretch}{0.9}\normalsize
\tableofcontents
\renewcommand{\baselinestretch}{1.0}\normalsize
\newpage


\section{Introduction}\label{sec:Intro}


``Global warming is not a prediction. It is happening'' [\cite{hansen2012gameover}]. 
A key driver of this global climate change is the accumulation of $\text{CO}_2$ in the atmosphere [\citet{Nakamura2025}].
As one of the biggest contributors to global $\text{CO}_2$ emissions, the transportation industry is therefore required to reduce its environmental impact [\citet{Din2023}; \citet[pp. 354--355]{CHAPMAN2007}]. 
Within this sector, international shipping plays a central role, since approximately 90\% of global goods are transported by sea [\citet[p.~50]{parviainen2018};  \citet[p. 362]{CHAPMAN2007}]. 
In 2012, global shipping emissions amounted to about 938 million tonnes of  $\text{CO}_2$, and 961 million tonnes of $\text{CO}_{2e}$ that combines $\text{CO}_2$, $\text{CH}_4$ and $\text{N}_2 \text{O}$ [\citet[p. 1]{HUANG2022}]. 
These emissions represented roughly 2.2\% of global anthropogenic \glspl{GHG} [\citet[p. 1]{HUANG2022}]. 
In response, the \gls{IMO} has set targets to reduce \gls{GHG} emissions by at least 20\% by 2030, 70\% by 2040, and to achieve net-zero by 2050 [\citet[p. 6]{MEPC_Resolutions2023}]. 
These objectives are reinforced by the EU’s \gls{MRV} regulation, which mandates reductions in maritime $\text{CO}_2$ emissions and introduces penalties for non-compliance [\citet[p. 3]{MRV}]. 


Alongside these more stringent regulations, the \gls{IMO} has increasingly held individual shipping companies accountable to meet the industry's \gls{GHG} emission targets. 
As a result, ship operators are now required to report performance ratings including the \gls{CII}, which ranks vessels on a scale from A to E based on their energy efficiency [\cite{IMO2016}]. 
Under this regulatory framework, lower vessel ratings may result in financial penalties and fewer charter opportunities. 
A key component of the \gls{CII} is the vessel-specific fuel efficiency, which can be improved through lower operating speeds, advanced engine technology, and optimized hull design [\citet[p.~362]{CHAPMAN2007}]. 
In particular, the condition of the hull is crucial, as it begins to deteriorate once vessels operate in water due to the buildup of hull and propeller fouling, which increases hydrodynamic drag [\citet[pp.~1--2]{DEMIREL2019}]. 
At present, the main strategy to mitigate this deterioration is to regularly perform costly hull inspection, cleaning, and re-painting [\cite{Granhag2023}]. 
Consequently, shipping companies face the following trade-off: cleaning frequently to reduce environmental impact versus conducting maintenance only when the associated costs are offset by fuel efficiency gains. 
To reconcile both considerations, one of the major open issues is determining the optimal cleaning timing, as the hull’s condition cannot be assessed without costly in-water inspections.
This paper addresses this challenge and proposes a data-driven optimization algorithm based on dynamic programming, which identifies the optimal cleaning schedule by approximating the impact of hull fouling on fuel efficiency.


\subsection{Ship Performance Modeling}


Optimizing the energy efficiency of maritime transportation has long been a complex challenge, which is typically addressed through measures related to ship design and operation [\citet[p. 1]{LANG2022110387}]. 
While numerous indicators exist for operational measures, those derived from fuel consumption and speed remain the most prevalent [\citet[p. 545]{soner2019}]. 
Specifically, the \gls{FOC} is directly integrated into the \gls{CII} and represents more than a quarter of a vessel’s operating costs [\citet[p. 1]{GKEREKOS2019}]. 
As \gls{GHG} emissions are also directly proportional to this consumption [\citet[p. 351]{CORADDU2017}], \gls{FOC} is adopted as the primary metric for evaluating operational costs and energy efficiency within our data-driven optimization approach introduced in Section~\ref{sec:CSO}.


Given that \gls{FOC} serves as a key metric in this study, its accurate prediction under varying operational conditions is essential, which requires a mathematical model capable of incorporating multiple variables and their interdependencies.
In the literature, modeling of \gls{FOC} is an active research area, with numerous recent studies employing \gls{ML}-based approaches for prediction [\citet{AGAND2023, ZHOU2022, moreira2021, YAN2020, soner2019, GKEREKOS2019, CORADDU2017}]. 
For example, \citet{GKEREKOS2019} conducted a comparative study employing a diverse set of \gls{ML} algorithms for \gls{FOC} modeling.
Across multiple performance metrics, \gls{SVM} and \gls{ET} achieved the highest predictive accuracy. 
Conversely, \citet{AGAND2023} reported \gls{XGB} as the top-performing model in their comparison, while \citet{YAN2020} identified \gls{RF} as the best performer.
These examples underscore that no consensus has been reached on an overall best modeling approach, primarily due to the heterogeneity of datasets, variable selection, and performance metrics across existing studies.
Accordingly, this paper provides a brief overview of commonly used \gls{ML} models in Subsection~\ref{sec:reg_models}, and later compares their performance in Subsection \ref{sec:selection_of_fuel_prediction_function} to identify the most suitable approach for predicting \gls{FOC} on the given empirical datasets. 


Once an appropriate mathematical framework for predicting \gls{FOC} is established, the influence of the underlying variables can be assessed.
For linear models, this is typically achieved by interpreting regression coefficients, whereas for most of the other \gls{ML}-based approaches, interpretability remains considerably more challenging.
These so-called black-box algorithms are difficult to interpret and often lack transparency for domain experts due to their reliance on numerous complex non-linear transformations [\citet[p. 154097]{Loyola2019}].
One way to mitigate this black-box characteristics is through tools from \gls{XAI}, which have been developed in recent years and are gaining increasing attention in the academic literature.
In particular, a widely adopted technique is the use of \gls{SHAP} values, which have been applied by several researchers to enhance the interpretability of \gls{FOC} models [\citet{handayani2023}; \citet{ma2023}; \citet{WANG2023}; \citet[ch. 9.6]{molnar2022}].
In Subsection~\ref{sec:shap}, this approach is explained in more detail, and in Subsection~\ref{sec:selection_of_fuel_prediction_function} it is applied to verify whether the independent variables exert the expected influence on \gls{FOC} predictions.
The main novelty of this aspect of our study lies in computing and evaluating \gls{SHAP} values for input variables approximating the effects of hull and propeller fouling on \gls{FOC}.


\subsection{Hull Fouling and its Impacts}


Before discussing suitable approaches for approximating and detecting hull fouling, we define the phenomenon as accumulation of living organisms on the submerged surface of a vessel. 
It begins with a thin biofilm of bacteria that roughens the hull and propellers.
Later in the process, larger macroalgae, plants and hard foulers (i.e., barnacles and mussels) will attach to the surface [\citet[p. 3]{alsawaftah2022}]. 
From an environmental perspective, hull fouling has two major negative impacts.
The first relates to invasive species carried within the ship's fouling, which can outcompete local species or transmit diseases between normally disconnected habitats [\citet{Lewis2009}; \citet[p. 116]{Floerl2003}].
For instance, \citet{Darwin1851} had already noted the spread of barnacles transported by trading ships more than a century ago. 
Similarly, a  review by \citet[p. 485]{Molnar2008} concluded that invasive marine species are a major threat to biodiversity, with profound ecological and economic consequences. 
In line with these findings, the \gls{IMO} has identified this aspect of biofouling ``... as one of the greatest threats to the ecological and the economic well being of the planet ...'' [\citet{IMO2023}]. 


In addition to the spread of invasive species, a second negative environmental impact of hull fouling originates from increased frictional resistance between the ship's hull and the surrounding water, which can raise \gls{FOC} by approximately 5–17\% [\citet{Adland2018, deHaas2023, bakka2022, Nikolaidis2022, CORADDU2019, CORADDU2019b, UZUN2019, DEMIREL2017}]. 
Depending on the fuel type, this may correspond to a substantial increase in emissions of sulfur, particles, and carbon dioxide to the atmosphere [\citet[p.~3]{Oliveira2022}]. 
This highlights the direct relevance of hull fouling to climate change and its conflict with the \gls{IMO}'s \gls{GHG} reduction targets.
Consequently, from an environmental perspective, maintenance activities such as hull inspections and cleanings should be carried out as frequently as practicable. 
However, since most of this maintenance work is very expensive [\citet[p. 1]{GUPTA2022}], the actual degree of fouling on a vessel often remains unknown in practice.
In particular, smaller shipping companies might neither have the budget for specialized equipment such as underwater cleaning robots nor to frequently hire professional divers to perform in-water inspections.


To address the financial constraints of hull inspections, an emerging research direction focuses on the use of inexpensive underwater cameras to enable reliable fouling assessment whenever vessels are in port.
For instance, \citet{bloomfield2021} and \citet{first2021} already demonstrated how images captured by these cameras can be combined with automated image classification techniques to quantify fouling. 
Although this approach appears promising, its integration into our \gls{FOC} model was not feasible because the vessel datasets available for this study do not contain sufficient imaging data.
To still be able to account for the hull fouling effect in our predictive models, we decided to approximate it by counting the \gls{DSC}, which was suggested by \citet{bakka2022} and \citet{LAURIE2021}.
A major advantage of this approach is the inexpensive derivation of the corresponding variable based on vessel-specific cleaning event records in the provided datasets. 
This \gls{DSC} variable can also be supplemented by the hours a vessel spends at anchor, or in low-speed and high-speed transit, which can easily be computed as well.
For these types of operational states, it is well established that they result in differing rates of biofouling build up [\citet{deHaas2023}; \citet[p. 350]{Lewis2009}], especially in the case of self-polishing paints [\citet{Bressy2009}]. 
Overall, we therefore construct a vector of biofouling measures rather than relying on a single variable, which would likely only partially capture the targeted latent effect.
To the best of our knowledge, this is a novel approach.
Once the \gls{FOC} prediction model is estimated, the influence of hull fouling can then be assessed by simulating a reset of this vector to zero.


\subsection{Hull Cleaning Events and FOC Optimization}\label{sec:hull_cleaning}


To compute the variable representing the \gls{DSC}, a cleaning event must be performed and recorded.
Over time, various hull-cleaning methods and devices, applied either on land or in water, have been introduced, as summarized in the comprehensive review by \citet{song2020}.
In principle, all these cleaning options could be incorporated as separate explanatory variables.
However, since this study is limited to the cleaning records available in the datasets, we differentiate between two types of events. 
The first type is dry-dock cleaning, where the vessel is removed from the water for extensive maintenance on land. 
During this process, the hull is typically treated with sandblasting and high-pressure water to remove fouling [\citet[p.~249]{HUA2018}].
Afterwards, a fresh anti-fouling paint may be applied. 
Traditionally, copper-based coatings dominated the market, but environmental concerns [\citet[pp. 9--10]{Townsin2003}] have driven a shift toward silicone-based alternatives [\cite{Srinivasan2007}] or non-anti-fouling paints, which both foul quicker but offer less initial resistance.


The second cleaning option considered in this study is in-water maintenance, where divers or specialized machines scrub and water-jet the hull as needed. 
While this approach provides operational flexibility, its effectiveness is comparatively lower [\cite{Tamburri2020}].
Regardless of effectiveness, both cleaning methods impose costs on the operator. 
Dry-dock cleaning is particularly expensive, not only due to service charges but also because the vessel loses several days of operation that could otherwise generate revenue.
For example, \citet{Schultz2011} estimated the average cost of a cleaning for a US Naval vessel at approximately \$26,808. 
Other studies report costs ranging from \$5,000 to \$50,000, depending on  vessel size [\cite{Wu2022}]. 


Given the available cleaning methods, the question remains when to perform them.
In practice, many ship operators adopt a policy of re-coating the hull every three to five years or when a significant performance drop is observed [\citet[p.~416]{song2020}; \citet[p.~255]{HUA2018}]. 
While this provides a reasonable heuristic, it is far from optimal. 
From a mathematical perspective, the procedure could be substantially improved through a simple sensitivity analysis that varies an input variable to assess how strongly predicted fuel consumption changes.
If a variable is identified as critical for reducing \gls{FOC}, a more advanced optimization framework can then be developed.
For example, \citet{Huotari2021} demonstrate an approach where voyage speeds are varied to minimize fuel consumption using convex optimization combined with Dijkstra’s algorithm. 
Similarly, \citet{Fagerholt2010} aim to reduce fuel emissions by optimizing vessel speed along shipping routes.
A different optimization approach is presented by \citet{GorenHuber2017}, who utilize a linear regression model to predict the log-transformed fuel consumption based on several input variables, including features related to propeller polishing, hull cleaning and dry dock events.
This data-driven model then serves as the core component of a mixed-integer linear program that aims to minimize operational costs by determining optimal values for the mentioned maintenance-related variables.
Inspired by these speed- and maintenance-based methodologies, we propose a novel optimization framework based on dynamic programming that determines the optimal time points for performing cleanings to mitigate the negative effects of hull fouling.


\subsection{Main Contributions \& Paper Outline}


This paper presents the following contributions, which, to the best of our knowledge, are each new to the literature: 
\vspace{-0.15cm}
\begin{enumerate}\setlength\itemsep{0.25em}
    \item We introduce a well-posed mathematical formulation for the problem of choosing an optimal hull cleaning schedule, which is presented in Section~\ref{sec:CSO} as~\eqref{eq:cleaning-2}.
    Based on this formulation, small to medium ship operating companies can form data-driven plans for their hull cleaning schedules under consideration of an environmental and economic impact.
    This reduces the dependence on expensive underwater surveys.

    \item We show that \eqref{eq:cleaning-2} satisfies the optimal substructure property.
    Based on this result, we design a novel dynamic programming algorithm to efficiently compute solutions of~\eqref{eq:cleaning-2}.  
    In particular, the algorithm includes a data-driven function to predict fuel consumption, which can be represented by \gls{ML} models.     
    In our numerical analysis, we demonstrate the computational efficiency of our approach, where the cleaning schedules of vessels are determined over a period of four years (approx. 30{,}000 observations) with up to two minutes of computation time.
    This represents a significant improvement compared to a brute force method, which would take several hours.
    
    \item We present how \gls{SHAP} values can be utilized to analyze and verify the influence of different biofouling metrics on \gls{FOC} predictions made by a range of \gls{ML} models. 

    \item We prepare and present a large dataset of around $800{,}000$ ship sensor observations across ten commercial tramp trading vessels measured over $1{,}000$ voyages.
    This data has never been analyzed in the literature before and allows us to conduct a unique case study of the proposed methodology, which demonstrates that our optimized hull cleaning schedules can reduce \gls{FOC} by up to 5\%. 
\end{enumerate}
\vspace{-0.15cm}


The remaining sections of this paper are organized as follows.
Subsection \ref{sec:reg_models} briefly introduces several \gls{ML} algorithms that are later used to predict \gls{FOC}.
Afterwards, the mathematical core idea of \gls{SHAP} values is explained in Subsection~\ref{sec:shap}. 
In Section \ref{sec:CSO}, the data-driven optimization algorithm is then introduced, and important theoretical properties are derived. 
This is followed by the description of the empirical datasets in Section~\ref{sec:empirical_data}. 
Subsequently, Section \ref{sec:numerical_experiments}  presents the results of our numerical studies that are performed based on the previously derived theoretical setup. 
The impact of this work is then summarized in Section~\ref{sec:conclusion}.



\section{Fuel Predictions with Explainable Machine Learning}\label{sec:fuel_prediction_with_explainable_ML}


To introduce our data-driven \gls{CSO} for ships that balances environmental and economic considerations, 
it is necessary to construct a data‑driven function $g: \mathbb{R}^k \rightarrow \mathbb{R}$ that predicts fuel consumption for a given ship profile with $k \in \mathbb{N}$ independent variables such as vessel speed, cargo load or trim. 
Accordingly, this section outlines the methodological foundation required to construct such a function and to interpret its predictions. 
Specifically, the first subsection briefly introduces common \gls{ML} algorithms used in the literature to estimate~$g$. 
Since most of these algorithms are black-box models, the second subsection presents an \gls{XAI} tool that is becoming increasingly popular in the literature on \gls{FOC} modeling. 
In the numerical application in Subsection~\ref{sec:selection_of_fuel_prediction_function}, this tool is then used to verify whether the features of the ship profile exert a plausible influence on the \gls{FOC} prediction. 


\subsection{Preliminaries on Regression Models}\label{sec:reg_models}


The estimation of any regression model requires the availability of a dataset containing $m \in \mathbb{N}$ observations, each with $k$ covariates and a corresponding dependent variable.
In this paper, we denote this dataset by $\mathcal{D} \coloneqq \bigl \{ (\mathbf{x}_1, y_1), \ldots, (\mathbf{x}_m, y_m)  \bigr\}$, where $\mathbf{x}_t \coloneqq \begin{bmatrix} x_{t1}, \ldots, x_{tk} \end{bmatrix}\in \mathbb{R}^{1 \times k}$ represents the ship profile at time point $t \in \{1, \ldots, m\}$, and $\mathbf{y} \coloneqq \begin{bmatrix} y_1, \ldots, y_m \end{bmatrix}^\top \in \mathbb{R}_{+}^m$ denotes the observed fuel consumption. 
To refer to the collection of ship profiles for a given vessel, we use the following notation, sometimes called a design matrix,
\begin{equation*}
    \mathbf{X} \coloneqq   \begin{bmatrix}
                       x_{11} & x_{12} & \cdots & x_{1k} \\ 
                       x_{21} & x_{22} & \cdots & x_{2k} \\
                       \vdots & \vdots & \ddots & \vdots \\
                       x_{m1} & x_{m2} & \cdots & x_{mk} 
                    \end{bmatrix}
                =   \begin{bmatrix}
                       \mathbf{x}_{1}^\top   \\ 
                       \mathbf{x}_{2}^\top   \\
                       \vdots  \\
                       \mathbf{x}_{m}^\top 
                    \end{bmatrix} \in \mathbb{R}^{m \times k},
\end{equation*}
where each row corresponds to the profile of the vessel at a given time point, and each column contains the realizations of a variable $l$ with $l \in \{1, \ldots, k\}$. 
Lastly, the collection of all predicted fuel consumptions is denoted by $\hat{\mathbf{y}} \coloneqq \begin{bmatrix} \hat{y}_1, \ldots, \hat{y}_m \end{bmatrix}^\top \in \mathbb{R}^m$.


Based on the previously described model components, we can introduce a diverse set of regression estimators used in the literature to predict \gls{FOC} for a given ship profile $\mathbf{x}_t$.
In particular, we propose two simpler approaches, \gls{LinReg} and \gls{LASSO}, which generate predictions by linearly combining the input variables, i.e., $\hat{y}_t = \beta_1 \cdot x_{t1} + \ldots + \beta_m \cdot x_{tk}$, without and with variable selection through regularization, respectively. 
This has the advantage that the resulting regression coefficients $\boldsymbol{\beta} \coloneqq \begin{bmatrix} \beta_1, \ldots, \beta_k \end{bmatrix}^\top \in \mathbb{R}^k$ can be interpreted to assess the influence of individual covariates. 
Besides these two linear approaches, more advanced \gls{ML} techniques are frequently used in the literature, which apply several non-linear transformation to the input variables to obtain a prediction $\hat{y}_t$.
Among these techniques we select three popular tree-based ensemble estimators, \gls{RF}, \gls{ET}, and \gls{XGB}, whose core idea is to partition the observations into subgroups based on their covariate values. 
Another non-parametric \gls{ML} approach considered in this paper is \gls{kNN}. 
It predicts $\hat{y}_t$ for a new instance by identifying, based on a distance measure, a neighborhood of similar previously observed cases and aggregating their corresponding $y_t$-values.
Lastly, we also evaluate two parametric non-linear \gls{ML} models in our comparison shown in Subsection~\ref{sec:selection_of_fuel_prediction_function}, namely the \gls{MLP} and \gls{SVR}.
For the sake of brevity, we omit their detailed description here.
Interested reader can find an explanation of their core ideas in Appendix~\hyperref[subsec:Linear_Regression]{\ref*{subsec:Linear_Regression}-A.11}, which also provide additional information on the other models.
Because both the analysis of biofouling-variable effects and optimization formulation are agnostic to the choice of regression technique, we refer in what follows only to the predictor $g$, rather than to any specific model.  

\vspace{-0.15cm}


\subsection{Shapley Additive Explanations}\label{sec:shap}


In the previous subsection, different \gls{ML} algorithms were briefly introduced. 
While the predictions of the two linear methods discussed above are easy to interpret, the remaining methods apply more complex transformations, resulting in predictions that cannot be verified by analyzing regression coefficients [\citet[p. 33--36]{Gianfagna2021}]. 
One way of mitigating this black-box characteristics is to use modern \gls{XAI} tools [\citet[pp. 2--4]{Kamath2021}]. 
These approaches aim to construct a simpler explanation model $h: \mathbb{R}^{k} \rightarrow \mathbb{R}$ that can be any interpretable approximation of the original model denoted by $g$ [\citet[p. 2]{Lundberg2017}]. 


Within the landscape of \gls{XAI} tools, a distinction can be made between global and local explanation approaches [\citet[pp. 81--82]{Gianfagna2021}]. 
Local techniques aim to explain an individual prediction $g(\mathbf{x}_t)$, whereas global methods seek to summarize the effects of input variables across the entire dataset [\citet[Chapter~6]{molnar2022}]. 
One frequently used local model-agnostic approach are \acrfull{SHAP}, which aim to explain the prediction for an instance $\mathbf{x}_t$ by calculating the contribution of each variable to the model output [\citet[Chapter~9.6]{molnar2022}]. 
The idea is closely related to Shapley values, which were developed in game theory to fairly distribute the total payout among players in a coalition [\citet[p. 117]{Rothman2020}]. 
In this game-theoretical framework, the individual contribution $\phi_l$ of an arbitrary player $l$ to the total payout $f_x$ generated by a group of players $M \in \mathbb{N}$ can be calculated as
\begin{equation}\label{eq:shap}
    \phi_l = \sum_{S \subseteq N \setminus \{l\}} \frac{|S|!(M - |S| - 1)!}{M!}[f_x(S \cup \{l\}) - f_x(S)],
\end{equation}
where $N \setminus \{l\}$ are all coalitions of players that are possible without player $l$. 
Specifically, the individual contribution of player $l$ is calculated as the difference in payouts between coalitions with and without this player, i.e., $f_x(S \cup \{l\}) - f_x(S)$. 
These differences are then weighted via $\frac{|S|!(M - |S| - 1)!}{M!}$, which assigns a higher importance to cases in which player $l$ either acts nearly alone or joins a group with many others. 
To connect this framework with the prediction process of a black-box model, \gls{SHAP} replaces the players in a coalition with the individual variable realizations of an observation  $\mathbf{x}_t$, and redefines the payout as the corresponding model prediction $\hat{y}_t$ [\citet[p.~94]{Gianfagna2021}].
Under additional assumptions, the resulting \gls{SHAP} values are then the solution to  \eqref{eq:shap} [\citet[pp.~4--5]{Lundberg2017}]. 
For each observation $t$ and feature $l$, a corresponding $\phi_{tl}$ value can therefore be calculated, which can subsequently be interpreted as the contribution of that feature value to the difference between the instance-specific prediction $\hat{y}_t$ and the mean prediction [\citet[Chapter~9.6]{molnar2022}; \citet[pp. 191--192]{Kamath2021}]. 

\vspace{-0.2cm}


\section{Data-Driven Hull Fouling Cleaning Schedule Optimization}\label{sec:CSO}

\newcommand{\bb}{\mathbf{b}}
\newcommand{\cc}{\mathbf{c}}
\newcommand{\uu}{\mathbf{u}}
\newcommand{\vv}{\mathbf{v}}
\newcommand{\ww}{\mathbf{w}}
\newcommand{\xx}{\mathbf{X}}
\newcommand{\zz}{\mathbf{z}}

Section~\ref{sec:fuel_prediction_with_explainable_ML} presented several approaches  to estimate the data-driven function $g$, which predicts \gls{FOC} based on a given ship profile $\mathbf{x}_t$ at time point $t$.
Based on these insights, this section develops the \acrfull{CSO} algorithm, where $g$ serves as a central component and whose objective is to make cleaning attractive from both an environmental and economic perspective.
Towards this end, Definition~\ref{defi:cleaning1} introduces a general optimization model, which is parameterized by $n \in \mathbb{N}$ chronologically ordered voyages.
The decision variables are represented by $\mathbf{z} \coloneqq \begin{bmatrix} z_1, \ldots, z_n \end{bmatrix}^\top \in \{0,1\}^n$, and defined as 
\begin{equation}
    z_j =
    \begin{cases}
    1 \quad & \text{we clean the ship before voyage } j \text{ for cost } c_j, \\
    0 \quad & \text{otherwise,}
    \end{cases}
\end{equation}
where $c_j \in \mathbb{R}_+$ denotes the cost of cleaning in the port before setting out on voyage $j \in \{1, \dots, n \}$.
The objective function of the minimization problem is the summation of two components: fuel costs and cleaning costs. 
The fuel costs are computed based on a function $f:\mathbb{X} \times \mathbb{H} \rightarrow \mathbb{R}$ whose inputs are a voyage-$j$-specific collection of ship profiles $\mathbf{X}_{j} \in \mathbb{X}$ and a corresponding cleaning schedule history $\zz_{:j} \coloneqq \begin{bmatrix} z_1,\dots,z_{j}  \end{bmatrix}^\top \in \mathbb{H}$. 
The space $\mathbb{X}\coloneqq \bigsqcup_j\mathbb{R}^{m_j\times k}$ is the disjoint union over design matrices with different number of time-indexed rows $m_j\in\mathbb{N}$ and the space $\mathbb{H}\coloneqq\bigsqcup_{j}\{0, 1\}^{j}$ is defined similarly as the union of cleaning schedule decision vectors of varying lengths $j=1,\dots,n$.
The function $f$ must then map the inputs to a prediction of the total fuel costs for the voyage, which could take the following form 
\begin{equation}
f(\mathbf{X}_{j}, \zz_{:j}) \coloneqq \left(\sum_{t=1}^{m_j} g(\mathbf{x}_{t}, \mathbf{z}_{:j}) \cdot\text{[fuel cost]} + \text{[overhead costs]}\right),
\end{equation}
where the notation $\mathbf{z}_{:j}$ requires that the cleaning and fuel costs of voyage $j$ are independent of all future cleaning decisions $z_{j+1}, ..., z_{n}$.  
The second component of the objective function shown in Definition~\ref{defi:cleaning1} simply adds up the costs $c_j$ of any performed cleaning $z_j=1$, i.e., $\mathbf{c}^\top \mathbf{z} = \sum_{j=1}^{n} c_j \cdot z_j$.  
It is noteworthy that not every port may offer a cleaning service such that in practice it might not be possible to clean the hull before certain voyages. 
Nevertheless, the proposed framework would still be applicable in this scenario, one just has to combine multiple voyages such that $n$ represents the number of time points where it is possible to perform a cleaning.


\begin{defi}[Problem CLEANING-1]{defi:cleaning1}
  Given a dimension $n\in\mathbb{N}$ and parameters $\left(\cc, \mathbf{X}, f\right)\in \left(\mathbb{R}^n, \mathbb{X}^n, {\mathbb{X}\times\mathbb{H}} \rightarrow \mathbb{R} \right)$, we define the following problem 
\begin{align}
\label{eq:cleaning}
\begin{split}
    \underset{\zz \in \{0,1\}^n}{\text{minimize}} \quad & \underbrace{\sum_{j=1}^{n}f(\mathbf{X}_{j}, \zz_{:j})}_{\text{Fuel Costs}} \ + \underbrace{\sum_{j=1}^{n} c_j \cdot z_j}_{\text{Cleaning Costs}},
    \\
\end{split}
\tag{CLEANING-1}
\end{align}  
where $\mathbf{c} \coloneqq  \begin{bmatrix}c_1, \ldots, c_n\end{bmatrix}^\top \in \mathbb{R}^n$ and $\mathbf{X} \coloneqq \begin{bmatrix}\mathbf{X}_1^\top, \ldots, \mathbf{X}_n^\top \end{bmatrix}^\top \in \mathbb{R}^{(m_1 + \ldots + m_n) \times k}$.
\end{defi}

\vspace{-0.2cm}


A global minimum of \eqref{eq:cleaning} corresponds to an optimal cleaning schedule that provides the overall lowest costs and is therefore the most desirable outcome. 
Although the problem formulation is general and mathematically well-posed, it remains hard to solve. 
Theoretically, we prove in Theorem~\ref{theorem:np-hard} [see Appendix~\ref{sec:appendix}] that \eqref{eq:cleaning} belongs to the class of NP-hard problems, which implies that no polynomial time algorithm can be expected to compute its global minimum.
Furthermore, based on Theorem~\ref{theo:algo_1} we show that no other algorithm can do better than the brute force approach, which exhaustively searches through all possible schedules.
In particular, the combinatorial nature of $\mathbf{z}$ means that the decision space grows exponentially with the number of voyages. 
Thus, finding a global minimum of \eqref{eq:cleaning} by a brute force approach, as is shown in Algorithm~\ref{alg:brute_force}, requires $n\cdot2^n$ calls to $f$.
To emphasize the impracticality of this approach, note that even a small instance with only 20 voyages already requires more than 1,000,000 calls to the predictive function $f$.


\begin{algorithm}[H]
    \caption{Brute Force Search}
    \label{alg:brute_force}
    \textbf{input} $\mathbf{X}_{1}, \ldots, \mathbf{X}_{j},  \ldots, \mathbf{X}_{n}$\;
    $\phi^{*} \gets \infty$\;
    \For{$\zz\in \{0,1\}^n$} 
    {
    \label{line:brute_for_loop}
    $\phi \gets \sum_{j=1}^{n} f(\mathbf{X}_{j}, \zz_{:j}) + \sum_{j=1}^{n} c_j \cdot z_j$\;
    \label{line:brute_obj_evaluation}
    \If{$\phi^{*} > \phi$}{
          $\phi^{*} \gets \phi$\;
          $\zz^{*} \gets  \zz$\;  
        }
    }
    \Return $\zz^*$
\end{algorithm}


To obtain a \gls{CSO} problem that can be solved for a realistic number of voyages~$n$, we develop a more tractable construction from \eqref{eq:cleaning}. 
For this purpose, we introduce a new variable $B_j \in \mathbb{B}$, which measures the amount of hull fouling that accumulates over voyage~$j$.
This could be the count of the \gls{DSC} but could also be generalized to a vector that contains a collection of more sophisticated measures. 
Moreover, we add the decision variable $b_j \in \mathbb{R}$, which represents the cumulative sum of  hull fouling before voyage $j$ and resets to zero if the ship is cleaned beforehand. 
With these two new components, we can then derive the mixed-integer problem shown in Definition~\ref{defi:cleaning_2}. 
As before, the objective function sums up the fuel and cleaning costs. 
However, the former component is now computed based on $\Tilde{f}:\mathbb{X}\times\mathbb{B}\rightarrow\mathbb{R}$, which no longer takes the entire cleaning history as input. 
Instead, it predicts the fuel costs for a given voyage $j$ with a collection of $m_j$ profiles $\mathbf{X}_j$ by taking into account the accumulated hull fouling $b_j$. 
Besides this adjustment, the new problem also introduces another constraint that ensures that the cumulative hull fouling measure $b_j$ resets to zero if the decision is made to clean, or is equivalent to the sum of $b_{j-1}$ and $B_{j-1}$ if the vessel continues operating without a cleaning. 
Under the reasonable assumption that the function $\Tilde{f}$ is monotonously increasing if the same voyage is undertaken with more hull fouling, i.e.,  $\Tilde{f}(\mathbf{X}_j, 0) \leq \Tilde{f}(\mathbf{X}_j, b_j)\leq \Tilde{f}(\mathbf{X}_j, b'_j)$ for all $0\leq b_j\leq b'_j$, it is then possible to show that a solution of \eqref{eq:cleaning-2} can be found in polynomial time with dynamic programming.


\vspace{0.2cm}

\begin{defi}[Problem CLEANING-2]{defi:cleaning_2}
    Given a dimension $n\in\mathbb{N}$ and parameters $(\cc, \mathbf{X}, \mathbf{B},\Tilde{f})\in (\mathbb{R}^n, \mathbb{X}^n, \mathbb{B}^{n +1},  {\mathbb{X}\times\mathbb{B}} \rightarrow \mathbb{R})$, we define the following problem
    \begin{align}
    \label{eq:cleaning-2}
    \begin{split}
    \underset{\zz \in \{0,1\}^n, \mathbf{b} \in \mathbb{R}^n}{\text{minimize}} \quad & \underbrace{\sum_{j=1}^{n} \Tilde{f} \left (\mathbf{X}_j, b_j \right)}_{\text{Fuel Costs}} \ + \underbrace{\sum_{j=1}^{n} c_j \cdot z_j}_{\text{Cleaning Costs}}
    \\
    \text{subject to} \quad &  b_j = \begin{cases}
        b_{j-1} + B_{j-1} & \text{if } z_j=0,\\
        0 & \text{if } z_j=1,
    \end{cases}
    \\
    \end{split}
    \tag{CLEANING-2}
    \end{align}
    where $\mathbf{c} \coloneqq  \begin{bmatrix}c_1, \ldots, c_n\end{bmatrix}^\top$, $\mathbf{B} \coloneqq \begin{bmatrix}B_0, B_1, \ldots, B_n\end{bmatrix}^\top$, and $\mathbf{X} \coloneqq \begin{bmatrix}\mathbf{X}_1^\top, \ldots, \mathbf{X}_n^\top \end{bmatrix}^\top \in \mathbb{R}^{m_1 + \ldots + m_n \times k}$.
\end{defi}

\vspace{-0.2cm}


In the field of ocean engineering, dynamic programming is a powerful approach with multiple applications. 
For instance, the famous Dijkstra’s algorithm and the A* algorithm are both based on dynamic programming principles, and have been used to optimize weather routing and speed profiles
[
    \citet{Fagerholt2010};
    \citet{Zis2020};
    \citet{Huotari2021}
].
In general, the programming technique can be applied to all problems for which a recursive decomposition into simpler sub-problems reveals a sequence of solutions that can be re-combined to find the overall solution [\citet[Chapter~6]{Dasgupta2006}]. 
This is known as the \emph{optimal substructure property}; see the statement of Theorem~\ref{theo:sub_problem} for a more formal construction in our context.
Even though many problems, such as \eqref{eq:cleaning}, do not satisfy this property, the situation is different for \eqref{eq:cleaning-2}.
Specifically, the decoupling of the dependence of $g$ on $\mathbf{z}$ in favor of a cumulative variable $\mathbf{b} \coloneqq \begin{bmatrix} b_1. \ldots, b_n \end{bmatrix}^\top$ allows dynamic programming to be applied, which is summarized by Theorem~\ref{theo:sub_problem}. 
The corresponding sub-problems can then be constructed as shown in Definition~\ref{defi:sub_problem}, and are denoted by SUB-PROBLEM$[i,j]$ with $0 \leq j < i < n$. 
Intuitively, they can be interpreted as planning ahead to voyage $i$, and considering the optimal cleaning schedule for the future voyages $i,...,n$ supposing that the last cleaning was performed before voyage~$j$.


\vspace{0.2cm}

\begin{defi}[{SUB-PROBLEM[i, j]}]{defi:sub_problem}
    Given  an instance of \eqref{eq:cleaning-2} of size $n$ and parameterized by $(\cc,\mathbf{X}, \mathbf{B}, \Tilde{f})$. 
    The sub-problem SUB-PROBLEM[$i,j$] is defined as a new instance of \eqref{eq:cleaning-2} of size $n-i+1$ and parameterized by $(\cc', \mathbf{X}', \mathbf{B}', \Tilde{f})$, where $\cc'= \begin{bmatrix}c_i,\ldots,c_n\end{bmatrix}^\top$ and $\mathbf{X}'= \begin{bmatrix}\mathbf{X}_i^\top,\ldots,\mathbf{X}_n^\top\end{bmatrix}^\top$ and $\mathbf{B}'= \begin{bmatrix}\sum_{k=j}^{i-1}B_k, B_i,...,B_n \end{bmatrix}^\top$.
\begin{itemize}
    \item Let $\Phi[i,j]\in\mathbb{R}$ denote the optimal objective value for the SUB-PROBLEM$[i,j]$.
    \item Let $\Psi[i,j] \coloneqq \begin{bmatrix} z_{i}^*, \ldots, z_n^*\end{bmatrix}^\top \in\{0,1\}^{(n-i+1)}$ be an optimal cleaning schedule for \\ SUB-PROBLEM$[i,j]$.
\end{itemize}
\end{defi}

\vspace{-0.2cm}


From a technical perspective, each SUB-PROBLEM$[i,j]$ aims to find the optimal subset of decision variables $\Psi[i,j] = \begin{bmatrix} z_{i}^*, \ldots, z_n^*\end{bmatrix}^\top \in \{0, 1\}^{n - i + 1}$. 
Specifically, these problems assume that the vessel is cleaned before voyage $j$ [i.e., $z_{j}=1$ and $b_j = 0$], and that no additional cleanings are performed prior to voyages $j + 1$ through $i - 1$ [i.e, $z_{j+1}=\dots=z_{i-1}=0$].  
Based on these assumptions, Theorem~\ref{theo:sub_problem} shows that the subproblems satisfy the optimal substructure property, where the proof has been deferred to Appendix~\ref{sec:proof_substructure}.
This property then enables the application of Algorithm~\ref{alg:dynamic_programming} to compute the optimal overall costs $\Phi[i,j]$ for each SUB-PROBLEM$[i,j]$.
For more details on this technique, see \citet[Section 15.3]{Cormen2022}. 


\begin{theo}{theo:sub_problem}
    The \eqref{eq:cleaning-2} problem satisfies the optimal substructure property; that is, the objective value of SUB-PROBLEM$[i,j]$ can be written in terms of the objective value of SUB-PROBLEM$[i+1,i]$ and SUB-PROBLEM$[i+1,j]$ for each pair of voyage indices $0\leq j < i < n$:
\begin{align}
\Phi[i,j] = \min \biggl\{  \Tilde{f}(\mathbf{X}_i, 0) + c_i  + \Phi[i+1,i],  \quad \Tilde{f}\bigl (\mathbf{X}_i, \sum_{k=j}^{i-1} B_k\bigr)  + \Phi[i+1,j] \biggr\}.
\end{align}
\end{theo}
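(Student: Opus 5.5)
The plan is a direct first-step decomposition: fix SUB-PROBLEM$[i,j]$, split its feasible set according to the value of the first decision variable $z_i$, and identify each half with one of the two smaller sub-problems.

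\emph{Conventions.} First I make the indexing of Definition~\ref{defi:sub_problem} explicit. SUB-PROBLEM$[i,j]$ is an instance of \eqref{eq:cleaning-2} on voyages $i,\dots,n$ with leading accumulated-fouling parameter $B'_0=\sum_{k=j}^{i-1}B_k$. I adopt the convention $b'_0=0$ implicit in \eqref{eq:cleaning-2}, so the constraint gives the following for the first voyage $i$:
\begin{itemize}
\item $b_i=0$ if $z_i=1$;
\item $b_i=S_{ij}\coloneqq\sum_{k=j}^{i-1}B_k$ if $z_i=0$.
\end{itemize}
For every later voyage $l>i$, the recursion is $b_l=b_{l-1}+B_{l-1}$ if $z_l=0$ and $b_l=0$ otherwise. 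Hence the whole vector $\bb$ is determined by $\zz$, and the objective is a function of $(z_i,\dots,z_n)$ alone. I also set $\Phi[n+1,\cdot]\coloneqq 0$, so that the argument covers the boundary step where only voyage $n$ remains.

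\emph{Case $z_i=1$.} The objective splits as $\Tilde f(\mathbf{X}_i,0)+c_i$ plus the cost of voyages $i+1,\dots,n$. For $l=i+1$, the fouling value is $b_{i+1}=0+B_i$ if $z_{i+1}=0$, and $0$ otherwise. This is exactly the initial condition of SUB-PROBLEM$[i+1,i]$, whose leading parameter is $\sum_{k=i}^{i}B_k=B_i$. The later recursions also coincide, since both instances use the same $B_{i+1},\dots,B_n$, $\cc$, $\mathbf{X}$ and $\Tilde f$. So the map $(1,z_{i+1},\dots,z_n)\mapsto(z_{i+1},\dots,z_n)$ is a bijection onto the feasible set of SUB-PROBLEM$[i+1,i]$ that preserves the tail cost. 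Minimizing over this branch therefore gives $\Tilde f(\mathbf{X}_i,0)+c_i+\Phi[i+1,i]$.

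\emph{Case $z_i=0$.} The first term is $\Tilde f(\mathbf{X}_i,S_{ij})$ and there is no cleaning cost. If $z_{i+1}=0$, the next fouling value is $b_{i+1}=S_{ij}+B_i=\sum_{k=j}^{i}B_k$, which is precisely the leading parameter of SUB-PROBLEM$[i+1,j]$. The same bijection argument then gives a branch minimum of $\Tilde f(\mathbf{X}_i,S_{ij})+\Phi[i+1,j]$.

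\emph{Conclusion.} The feasible set is the disjoint union of the two branches, so the overall minimum is the smaller of the two branch minima. This is the claimed recursion, and optimal schedules $\Psi[i,j]$ are obtained by prepending the minimizing choice of $z_i$ to the corresponding $\Psi[i+1,\cdot]$.

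The main obstacle is bookkeeping rather than analysis. The key check is that the tail cost depends on the prefix $(z_j,\dots,z_i)$ only through the single number $b_{i+1}$. This is exactly what fails for \eqref{eq:cleaning}, where $f$ depends on the full history $\zz_{:j}$. It must be verified carefully against the off-by-one shifts in $\mathbf{B}'$, matching the sum $\sum_{k=j}^{i}B_k$ with the leading entry of $\mathbf{B}'$ for SUB-PROBLEM$[i+1,j]$. The monotonicity of $\Tilde f$ is not needed for this identity; it only matters for later complexity or pruning arguments.
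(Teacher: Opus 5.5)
Your proposal is correct and follows the same route as the paper: you split on $z_i$ and identify the tail of each branch with SUB-PROBLEM$[i+1,i]$ or SUB-PROBLEM$[i+1,j]$ by matching the fouling value $b_{i+1}$ against the leading entry of $\mathbf{B}'$. Your bijection argument is slightly more complete than the paper's proof, which only evaluates the particular schedules $(1,\Psi[i+1,i])$ and $(0,\Psi[i+1,j])$; it neither argues explicitly that no other tail does better nor that the overall optimum is the minimum of the two branches.
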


\vspace{-0.2cm}


Conceptually, Algorithm~\ref{alg:dynamic_programming}  has an outer for-loop that starts at Line~\ref{line:outer_loop} and an inner for-loop that begins at Line~\ref{line:inner_loop}.
The outer loop has exactly $n$ iterations, while the inner has at most $n$.
Since the operations inside these two loops consist only of constant-time statements and function calls to $\Tilde{f}$, the computational complexity is characterized by $\mathcal{O}(n^2)$ calls to $\Tilde{f}$. 
In the first iteration of the outer loop, the algorithm solves the most simple sub-problems with $i=n$, and $j=0,...,n$. 
As these are single-voyage problems, the solution is just a choice between cleaning or not cleaning. 
After this initial round, the algorithm then works backwards through time, solving SUB-PROBLEM$[i,j]$ according to \eqref{eq:lambda_clean} and \eqref{eq:lambda_not}. 
This procedure enables the reuse of the previously computed solutions, which reduces computation time by avoiding a full re-evaluation of the objective function. 
In particular, the optimal objective value for the voyage subset $i,...,n$ and a cleaning before voyage~$i$ is computed in Line~\ref{line:m_clean} and denoted by $\phi^{(clean)}$.
It consists of the sum of three components:  the cost of cleaning before voyage $i$ [i.e., $c_i$], the cost of voyage $i$ with zero hull fouling $f(\mathbf{X}_i, 0)$, and the optimal overall costs $\Phi[i+1, i]$  for the voyage subset $i+1,...,n$ assuming we cleaned before voyage~$i$. 
Afterwards, the costs of the scenario where no cleaning is performed before voyage $i$ is similarly computed in Line~\ref{line:m_not} and denoted by $\phi^{(not)}$. 
In Line 11 to 16, the two different outcomes are then compared and the solution for SUB-PROBLEM$[i,j]$ with lower costs is selected.
Overall, this procedure provides a practical way to solve \eqref{eq:cleaning-2}, as demonstrated in Section~\ref{sec:numerical_experiments}, where we apply and solve the optimization framework using real-world datasets. 


\begin{algorithm}[H]
    \footnotesize
    \caption{Dynamic \acrfull{CSO}}\label{alg:dynamic_programming}
    \textbf{input} $\mathbf{X}_{1}, \ldots, \mathbf{X}_{j},  \ldots, \mathbf{X}_{n}$\;
    $\Psi \in \{0,1\}^{n\times n \times n}$ \;
    $\Phi \in \mathbb{R}^{n\times n}$ \;
    \For{$i = n, \dots , 1$}  
    {
        \label{line:outer_loop}
        $\phi^{(\text{clean})} \gets c_i + \Tilde{f}(\mathbf{X}_i, 0)  + \Phi[i+1,i]$ \;
        \label{line:m_clean}
        $\zz^{(\text{clean})} \gets \left( 1, \Psi[i+1, i] \right)$ \;
        \label{line:z_clean}
        \For{$j = 0,1, \dots, i$}
        {
        \label{line:inner_loop} 
         $b_i \gets \sum_{k=j}^{i} B_k$\;
         $\phi^{(\text{not})} \gets \Tilde{f}(\mathbf{X}_i, b_i)  + \Phi[i+1,j]$ \;
         \label{line:m_not}
         $\zz^{(\text{not})} \gets \left(0, \Psi[i+1, j]\right)$ \;
          \label{line:z_not}
        \If{$\phi^{(\text{clean})} \leq \phi^{(\text{not})}$}{
              $\Phi[i,j] \gets \phi^{(\text{clean})}$ \;
              $\Psi[i,j] \gets \zz^{(\text{clean})}$\;  
            }
        \Else{
              $\Phi[i,j] \gets \phi^{(\text{not})}$ \;
              $\Psi[i,j] \gets \zz^{(\text{not})}$\;  
        }
        }
    }
    \Return $\Psi[1,0]$, $\Phi[1,0]$
\end{algorithm}


\section{Empirical Data Sources}\label{sec:empirical_data}

After proposing a mathematical framework for optimizing the cleaning schedule of ships, this section introduces the empirical data used to evaluate its performance. 
Specifically, the empirical analysis relies on real-world vessel data obtained through a cooperation agreement with \href{https://carisbrooke.co/}{Carisbrooke Shipping Ltd.~\faExternalLink}, which is a UK-based shipping company that operates a worldwide-trading  fleet of over 27 modern dry cargo and multi-purpose vessels.
As part of this agreement, the industry partner granted us access to hourly voyage data for ten vessels from 2016 to 2025, which we refer to as $V_1,\dots,V_{10}$.  
While vessel $V_8$ was excluded from the empirical evaluation due to data irregularities, each of the remaining nine ships provides approximately $80{,}000$ observations. 
As these vessels do not operate based on a repetitive schedule and can be chartered by different customers, important ship characteristics like the routes, the loads and the time spent in harbor can vary greatly. 
This also affects the hull fouling, which develops differently over time for each vessel.
Thus, the results of our models are likely less comparable to studies based on ship data with fixed schedules, e.g., ferries. 
In the upcoming subsections, the company data is separated into two sources and described in more detail. 
Afterwards, we introduce relevant weather and marine variables, which were obtained from \href{https://open-meteo.com/}{Open-Meteo~\faExternalLink}.
This subsection also includes non-linear variable transformations that integrate company and weather data.


\subsection{Cleaning Reports}\label{sec:cleaning_reports}


The first data source consists of the cleaning reports, which are generated within the company to document the cleaning procedures carried out on each vessel.
Based on these reports, two types of manual cleaning events can be distinguished: \gls{IWS} and \gls{DDM}.
As explained in Subsection~\ref{sec:hull_cleaning}, divers are deployed during an \gls{IWS} to photograph and inspect the vessel, remove major biofouling, and perform necessary repairs.
In contrast, \gls{DDM} is a far more involved process during which the vessel is docked for several days. 
In this period, the ship's entire hull is grit-blasted to abrasively remove biofouling as well as any remaining paint [\citet[p. 249]{HUA2018}]. 
Once the hull has been cleaned, any necessary repairs are carried out and the surface is subsequently recoated [\citet[p. 3] {Adland2018}]. 
Figure~\ref{fig:after_dry_dock} shows one of the ten vessels after having been cleaned using this procedure.
Compared to an \gls{IWS}, a \gls{DDM} is more costly and time-consuming, but it is typically also more effective in removing biofouling [\citet[p. 5]{VALCHEV2022}].
For our empirical data, we use the cleaning records to construct two variables for each vessel: \gls{DSIWS} and \gls{DSDDM}.
Additionally, we create the variable \gls{DSC}, which combines \gls{DSIWS} and \gls{DSDDM} by counting the days since the most recent cleaning event. 


\begin{figure}[H]
    \centering
    \includegraphics[width=0.4\linewidth]{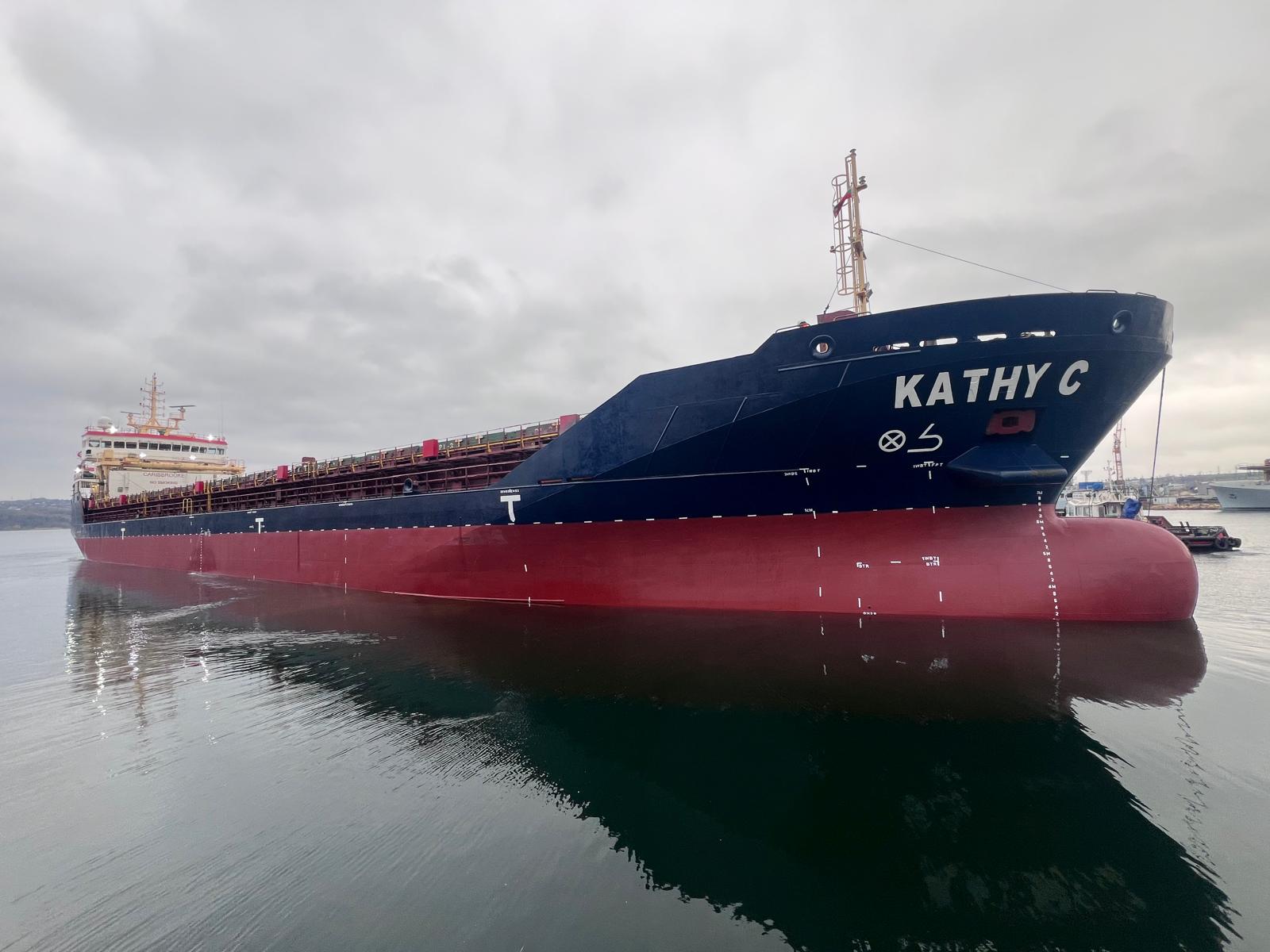}
    \caption{One of the ten sister ships included in the study, photographed immediately after \gls{DDM}.}
    \label{fig:after_dry_dock}
\end{figure}


\subsection{Ship Sensor Data}\label{sec:ship_sensor_data}


Nowadays, all ships of 5,000 gross tonnage and above must record fuel consumption data, broken down by fuel type, along with other specified information due to Regulation 22A of \citet[pp. 4--5]{IMO2016}. 
To comply with this requirement, our industry partner has installed a Marine Fuel Efficiency Monitoring System on each vessel. 
This system is our second data source and enables the tracking of the \gls{FOC} and several related variables, such as the \gls{STW}, \gls{SOG}, \gls{WUK} and \gls{COG}, on an hourly basis using multiple onboard sensors. 
Table \ref{tab:ship_sensor_data} provides a brief description of all relevant variables derived from this data source and specifies their respective data types. 
To link the ship sensor data to the variables from Subsection~\ref{sec:cleaning_reports}, we match the \textit{Date \& Time} entries with the time stamps of the cleaning records.


\begin{table}[H]
    \scriptsize
    \centering
    \captionsetup{justification=centering}
    \caption{Description of Ship Sensor Data Variables.}
    \label{tab:ship_sensor_data}
        \begin{tabular}{lll}
            \toprule
            Variable        & Type    (Unit) &  Description \\  \midrule
            Voyage ID       & Integer        & Unique identifier of the voyage per vessel.  \\ 
            IMO Number      & Integer        & A seven-digit integer unique to each ship.  \\ 
            Date \& Time    & Time Stamp     & The time stamp when the entry was created.   \\
            Position        & String         & Geographical coordinates of the ship when the entry was created.  \\
            \gls{FOC}       & Float    (kg/h)  & Total \gls{FOC} measured by flow meters on the fuel pipes. \\
            \gls{SOG}       & Float    (kn)    &  The average speed relative to the surface of the earth in knots.    \\
            \gls{STW}       & Float    (kn)    &  The average speed with respect to the water in knots.     \\
            Cargo Loaded    & Float    (tons)  &  Weight of cargo loaded onto the vessel.    \\
            Draught         & Float    (m)     &  Average depth of the vessel below the waterline in meters.  \\
            Trim            & Float    (m)     &  Average difference between the aft and the forward draft in meters.          \\
            \gls{WUK}       & Float    (m)     &  Average space between the ship's bottom and the seabed in meters. \\
            \gls{COG}       & Float    (\textdegree)  &  The actual direction of progress w.r.t. the earth surface in degrees.   \\
            Wind Speed      & Float    (m/s)          & The average speed of the wind in meters per second.        \\
            Wind Direction  & Float    (\textdegree)  & The direction of the wind in degrees.           \\
            Fuel Oil Type   & String         & The used fuel oil: HFO, LFO, LSHFO or MGO.        \\
            Propeller Shaft & Float    (rpm)  & Propeller shaft speed in rotations per minute.\\
            Pitch Propeller & Float    (\%)  & Propeller angle as percentage of maximum angle.\\
            \bottomrule
        \end{tabular}
\end{table}

\vspace{-0.2cm}


A subset of the features listed in Table \ref{tab:ship_sensor_data} are categorical or represent a time stamp. 
Since most \gls{ML} algorithms require numerical input, we transform these features using common preprocessing techniques. 
In particular, the \textit{Fuel Oil Type} is encoded based on four indicator variables, \gls{LFO}, \gls{LSHFO} and \gls{MGO}, with \gls{HFO} serving as the reference category. 
Similarly, season-specific indicator variables are generated from the \textit{Date \& Time} entries, where the winter months serve as a reference. 
An overview of the transformations is provided by Table~\ref{tab:ship_sensor_data_2}.
This summary also includes several additional variables based on \gls{STW} that are designed to partially capture the effect of hull fouling. 
Specifically, these variables measure the hours spent in certain speed ranges since the most recent cleaning in order to account for self-polishing effects.
The feature \textit{HaS0} is inspired by the findings of \citet{deHaas2023}, who showed that the number of anchorage days since the last vessel cleaning provides an alternative way to capture the hull fouling effect.  
Moreover, since we observe that the ship sensor data occasionally contains longer gaps between hourly observations due to maintenance work or other factors, we also create a feature named \textit{HU}, which counts these unaccounted hours. 
Ultimately, we also add lagged versions of the \gls{SOG} and \gls{STW} variables as additional features to the dataset, which simply take the corresponding value from the previous time index. 
This allows the models to better capture an acceleration/deceleration process and may also make the dataset more robust against one-off faulty readings.


\begin{table}[H]
    \scriptsize
    \centering
    \captionsetup{justification=centering}
    \caption{Transformations of Non-Numeric Variables.}
    \label{tab:ship_sensor_data_2}
        \begin{tabular}{lll}
            \toprule
            Variable & Type (Unit) &  Description \\  \midrule
            \gls{LFO}       & Integer & Indicator that is ``1'' if \gls{LFO}  is used, otherwise ``0''. \\
            \gls{LSHFO}     & Integer & Indicator that is ``1'' if \gls{LSHFO} is used, otherwise ``0''.\\
            \gls{MGO}       & Integer & Indicator that is ``1'' if \gls{MGO} is used, otherwise ``0''. \\
            Spring    & Integer & Indicator that is ``1'' if it is spring, otherwise ``0''. \\
            Summer    & Integer & Indicator that is ``1'' if it is summer, otherwise ``0''. \\
            Autumn    & Integer & Indicator that is ``1'' if it is autumn, otherwise ``0''. \\
            HU        & Float (h) & The unaccounted hours since cleaning.\\
            HaS0      & Float (h) & The hours at speed $[0, 1]$ knots since the last cleaning. \\
            HaS6      & Float (h) & The hours at speed $(1, 6]$ knots since the last cleaning.\\
            HaS9      & Float (h)& The hours at speed $(6, 9]$ knots since the last cleaning. \\
            HaS12     & Float (h) & The hours at speed $(9, \infty)$ knots since the last cleaning. \\
            \bottomrule
        \end{tabular}
\end{table}


Beyond the transformation of variables, we also perform common data quality and plausibility checks for the ship sensor data. 
This includes checking for missing or implausible values and, if necessary, performing an imputation. 
To limit the impact of extreme observations, we additionally apply winsorizing to selected variables. 
A first impression of the preprocessed data  is provided by Figure~\ref{fig:FOC_per_speed_and_DSC}, which illustrates the effect of the discretized \gls{DSDDM} on the \gls{FOC} through boxplots based on the data of vessels $V_1, V_3$ and $V_7$. 
Overall, the cleaned data for the selected vessels already indicates that the median \gls{FOC} increases as more time has passed since the last cleaning.
However, it is worth mentioning that such consistent plots cannot be produced for all vessels and we shall use more advanced techniques to properly model this relationship. 


\begin{figure}[H] 
    \centering
    \includegraphics[width = \linewidth]{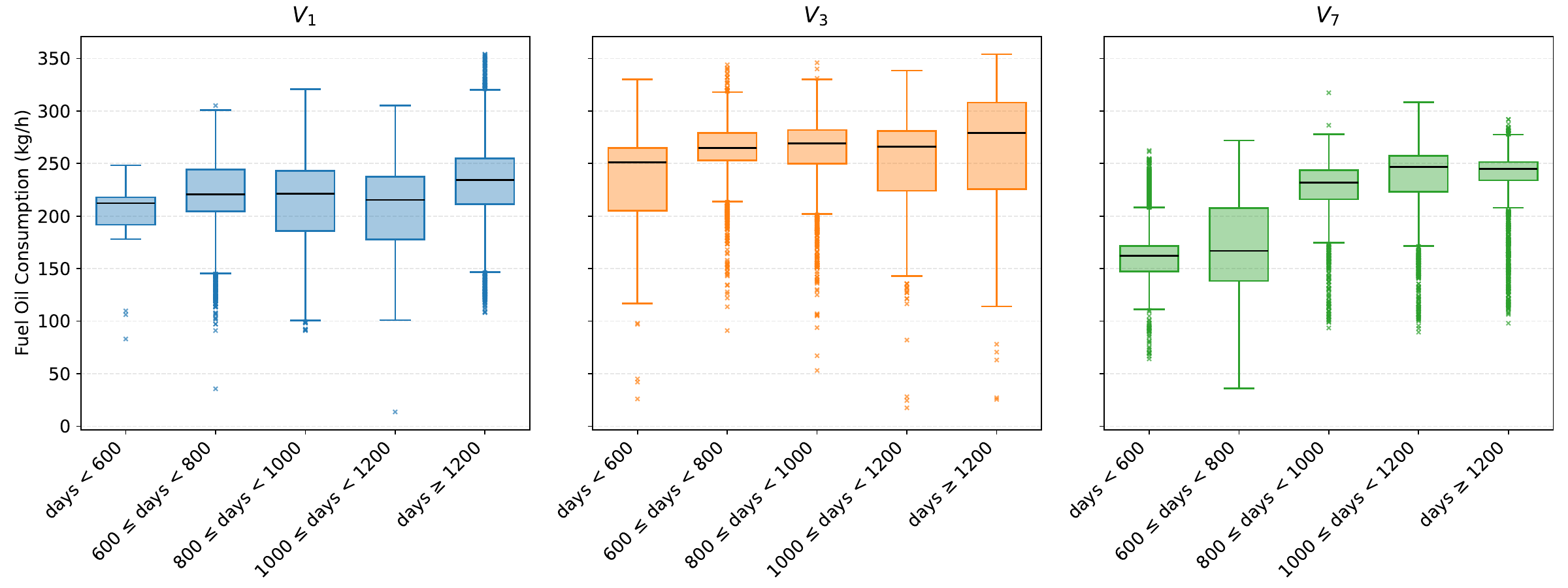}
    \caption{Three examples (left to right vessels $V_1$, $V_3$ and $V_7$) where the influence of the independent variable \acrfull{DSDDM} can be observed against the \acrfull{FOC} through box and whisker plots.}
\label{fig:FOC_per_speed_and_DSC}
\end{figure}


\subsection{Weather and Marine Data}


After presenting the ship sensor data, this subsection introduces the final data source: weather and marine variables obtained from \href{https://open-meteo.com/}{Open-Meteo~\faExternalLink}. 
We chose this freely available data source for our empirical analysis, as the data provider used by the industry partner did not grant permission to use their system for research purposes. 
Table \ref{tab:Weather_Marine_Data} provides an overview of the obtained variables, which are linked to other data sources based on the \textit{Date \& Time} and \textit{Location} entries.
It is noteworthy that the open source data does not entirely match the time period of the ship sensor data, which is available from 2016.
Specifically, the weather variables can first be extracted for the second half of 2021, while the marine features become available starting in October of the same year.
More information about the data can be found in the documentation of the Historical Weather and Marine Forecast API on the \href{https://open-meteo.com/}{Open-Meteo~\faExternalLink} website.


\begin{table}[H]
    \scriptsize
    \centering
    \captionsetup{justification=centering}
    \caption{Description of Weather and Marine Variables.}
    \label{tab:Weather_Marine_Data}
        \begin{tabular}{lll}
        \toprule
        Variable                    & Type (Unit) & Description \\
        \midrule
        Date \& Time                        & Time Stamp  & Time point of the data measurement.  \\
        Location                    & String     & Geographical coordinates of measured data. \\
        Weather code                & Float     & Weather condition based on the WMO code. \\
        Temperature 2m              & Float (\textdegree C) & Air temperature at 2 meters above ground.  \\
        Wind speed 10m              & Float (m/s) & Wind speed at 10 meters above ground. \\
        Wind direction 10m          & Float (\textdegree) & Wind direction at 10 meters above ground. \\
        Wave height                 & Float (m) & Wave height of significant mean waves.  \\
        Wave direction              & Float (\textdegree) & Mean direction of mean waves. \\
        Wave period                 & Float (s) & Period between mean waves. \\
        Ocean current velocity      & Float (m/s) & Velocity of ocean current. \\
        Ocean current direction     & Float (\textdegree) & Direction following the flow of the current. \\
        \bottomrule
        \end{tabular}
\end{table}


Similar to Subsection~\ref{sec:ship_sensor_data}, we derive additional variables based on the weather and marine features. 
For example, we observe that the \gls{FOC} can be sensitive to high wind speeds and ocean currents, particularly in the English Channel.
Depending on the situation, these forces may act in the direction of travel or against it.  
To inlcude this effect, we introduce two variable transformations to our dataset, which are calculated in combination with the ship sensor data.
Specifically, we compute the new features based on
\begin{equation*}
    r_{\text{wind/current}} = v_\text{wind/current}\cdot\cos\left(|\theta_{\text{course}}-\theta_{\text{wind/current}}|\right)
\end{equation*}
where $\theta_{\text{course}}$ corresponds to the \gls{COG} variable from Table~\ref{tab:ship_sensor_data}, and $v_{\text{wind/current}}$ and $\theta_{\text{wind/current}}$ are the wind speed and direction (respectively ocean current velocity and direction) variables as described in Table~\ref{tab:Weather_Marine_Data}.


\begin{figure}[H]
    \centering
    \includegraphics[width=0.3\linewidth]{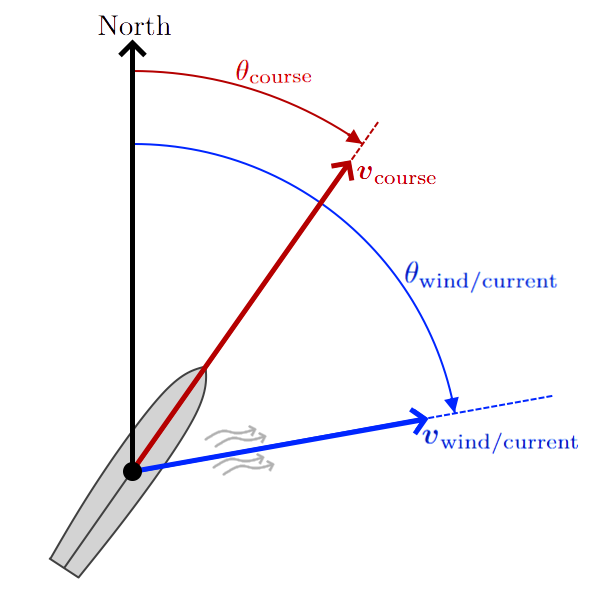}
    \caption{Visualization of the relative angle between \gls{COG} and wind/current direction.}
    \label{fig:course_wind}
\end{figure}


\section{Practical Application \& Numerical Studies}\label{sec:numerical_experiments}


After explaining the mathematical framework for optimizing the cleaning schedule of ships and introducing the data sources, this section presents the findings of the empirical study. 
In particular, the first subsection focuses on comparing the performance of the \gls{ML} algorithms using the novel datasets and selects the best model to represent the data-driven function $g$.  
Based on the chosen \gls{ML} algorithm, the \gls{SHAP} values of several variables are then illustrated and interpreted.
Subsequently, the second subsection evaluates the results of the \gls{CSO} to empirically verify whether the application of the proposed algorithm leads to additional cleaning events in the considered time period that are economically justified and reduce fuel consumption.


\subsection{Selection of Fuel Prediction Function}
\label{sec:selection_of_fuel_prediction_function}


To compare the performance of the \gls{ML} algorithms introduced in Subsection~\ref{sec:reg_models}, hyperparameter tuning is required beforehand. 
In contrast to model parameters, hyperparameters are not learned during model estimation but need to be specified manually prior to training [\citet[pp.~35--36]{geron2022}].  
In our study, these parameters are chosen via a random search, which samples 100 different configurations from specified distributions [see Appendix~\hyperref[subsec:Linear_Regression]{\ref*{subsec:Linear_Regression}-A.11}] and re-estimates the model for each one.
The impact of different configurations can then be compared based on the model-specific validation scores.
An alternative to this procedure would be a grid search, which exhaustively explores the parameter space but becomes impractical for models with many hyperparameters, as in our case [\citet[p.~283]{bergstra2012}].


Having addressed the hyperparameter tuning, we now turn to the variable selection process in our study. 
Including all transformations and indicator variables presented in Section~\ref{sec:empirical_data}, the datasets contain a total of 54 variables. 
This high-dimensional setup can be problematic for three reasons: (i) the models may over-fit the data, (ii) training times may become long, and (iii) there may be high co-variance among variables.
A common approach to mitigate such issues is \gls{PCA}.
However, this method is incompatible with the type of analysis we wish to perform, as it removes the ability to study the effects of individual variables. 
To avoid this limitation, we apply forward stepwise feature selection [\citet[Section 3.3.2]{hastie2009}]. 
At each step, we trial adding one unselected variable at a time, run a new hyperparameter search, and then select the variable that results in the lowest validation score.


To perform the hyperparameter tuning and the variable selection, a validation set must be constructed.
For this purpose, we first split the instances for each vessel into a training and a testing set using an approximate 85-15 ratio to retain unseen data for the final model evaluation.
Specifically, we construct the subsets such that all the observations from a particular voyage [e.g., all 33 hourly instances from Portsmouth to Gijón] are assigned to either the training or the testing set, but never both, in order to avoid introducing data leakage into our comparison~[\citet{Rumala2023}].
Based on a similar procedure, we then further split the training data of each vessel into five folds to perform the feature selection and hyperparameter tuning as discussed previously.
This allows us to fit the model for each configuration five times by cycling through the folds one to five as the validation set while using the remaining four folds for training [see Figure~\ref{fig:k-fold-cross-val}].
The average validation score can then be used to assess the quality of a feature or hyperparameter choice [\citet[pp. 32--33]{Bishop2006}; \citet[pp. 241--242]{hastie2009}].


Consistent with the methodology described above, Figure~\ref{fig:foward_selection} illustrates the results of the variable selection for vessels $V_1$, $V_2$ and $V_3$ based on the \gls{XGB} model.
In this comparison, the \gls{RMSE} is used as performance metric and is shown on the y-axis [\citet{chai2014}].
The number of selected variables is shown on the x-axis, including the name of the variable selected at each step per vessel.
Overall, we find that selecting 13-15 features results in the best validation performance and that important variables like the \gls{STW} or \textit{Propeller Shaft} are selected first.
Moreover, we observe that several variables constructed to approximate the effect of hull fouling on the \gls{FOC} are selected across the different vessels.
For instance, the selection paths of $V_1$ and $V_3$ show that the \gls{DSDDM} and \gls{DSC} are chosen as fourth and third variable in the regression model, respectively.
For the hyperparameter selection, we find that the best configuration for each model varies across the nine vessels, but the optimized selection consistently offers a significant improvement over the default values. 


To compare the performance of the \gls{ML} algorithms, we select multiple performance metrics, which are commonly used in the literature [\citet[p. 10]{GUPTA2022}; \citet[p.~50]{botchkarev2019}]. 
Table~\ref{tab:summary_R2} summarizes the model results for each vessel using the $\text{R}^2$ score computed based on the unseen test data.
In this overview, the best result for each vessel is highlighted in bold, which indicates that \gls{XGB} delivers the best performance regardless of the dataset.
Similar, though slightly worse, results are provided by the \gls{RF}, \gls{ET}, and \gls{SVR}.
This ranking suggests the presence of non-linear relationships between some input variables and the \gls{FOC}, as well as potential interaction effects, since the tree-based models in particular are very effective in identifying such patterns.
A first noticeable performance drop is then observed for the \gls{MLP}.
In principle, this model can also be very powerful in detecting non-linear effects and interactions between variables.
However, the predefined structure of the hidden layers might limit this capability for the data in our study, preventing the model from fully playing to its strengths.
Another performance drop is then observed for the two linear models, \gls{LASSO} and \gls{LinReg}, which further promotes the previously made suppositions about the non-linear data structure and interaction effects.
The overall worst $\text{R}^2$ scores for the test data are achieved by \gls{kNN}.
For this model, we observe that fewer than five variables are selected and that the number of neighbors is very close to the upper limit, which may lead to overly generalized \gls{FOC} predictions. 


\begin{figure}[H]
    \centering
    \includegraphics[width=1\linewidth]{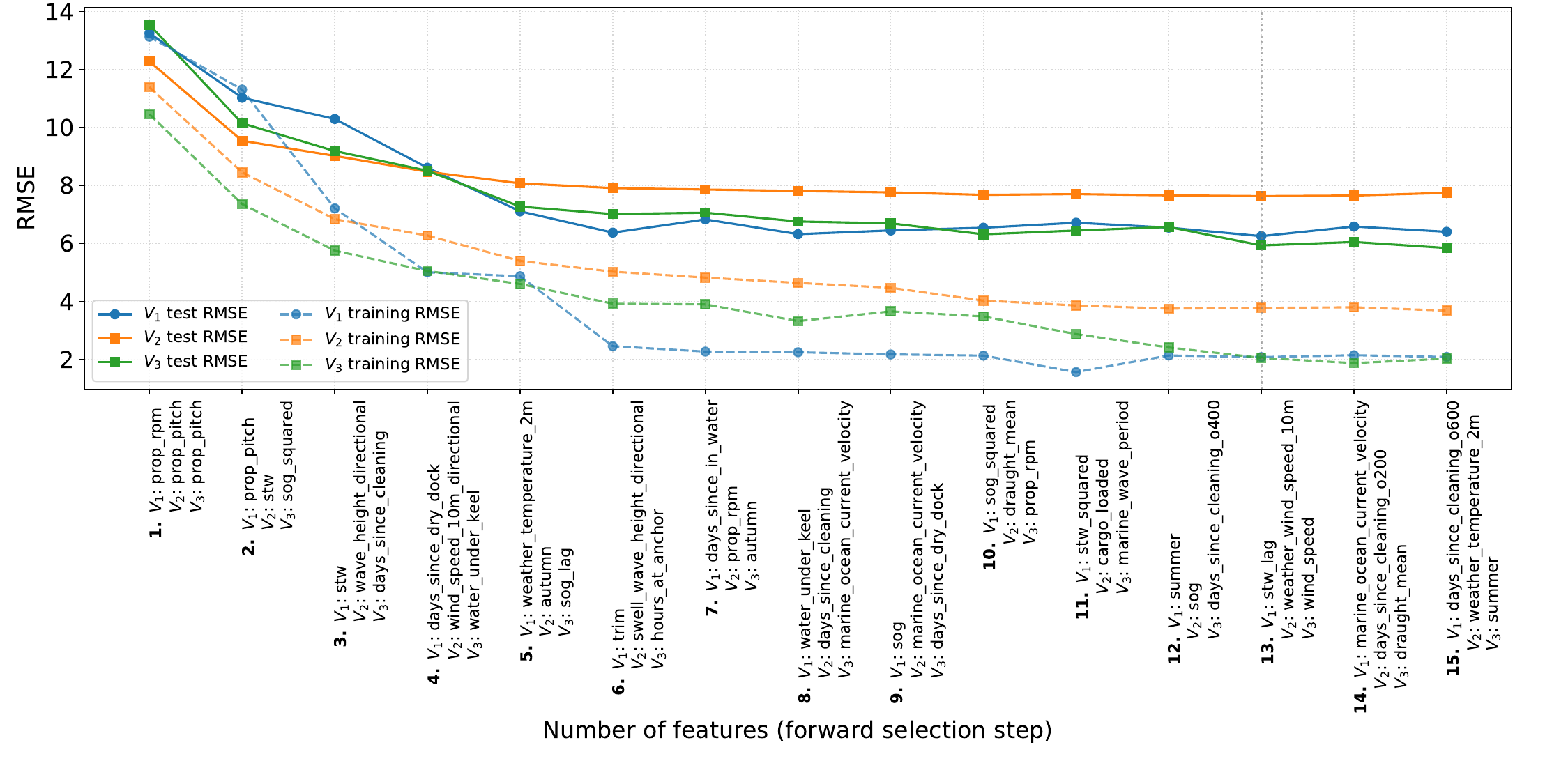}
    \caption{The forward feature selection process visualized through the training and test error of the \gls{XGB} model for vessels $V_1$, $V_2$ and $V_3$.}
    \label{fig:foward_selection}
\end{figure}

\vspace{-0.4cm}


Besides using the $\text{R}^2$ score as performance metric, we also compare the model predictions based on the \gls{RMSE} and the \gls{MAE}.
The corresponding results are shown in Table~\ref{tab:summary_RMSE} and Table~\ref{tab:summary_MAE}, respectively. 
While the \gls{RMSE} scores suggests an identical model ranking, the best \gls{MAE} scores for each vessel are distributed across the tree-based models.
This indicates that \gls{XGB} generates fewer very large prediction errors, since the $\text{R}^2$  and \gls{RMSE} penalize such mispredictions more strongly than the \gls{MAE}. 


\begin{table}[H]
    \scriptsize
    \centering
    \captionsetup{justification=centering}
    \caption{$\text{R}^2$ scores of the \gls{ML} models on the test datasets.}
\begin{tabular}{l|ccccccccc}
\toprule
& \acrshort{LinReg}
& \acrshort{LASSO}
& \acrshort{kNN}
& \acrshort{MLP}
& \acrshort{SVR}
& \acrshort{ET}
& \acrshort{RF}
& \acrshort{XGB}\\
\midrule
$V_1$   & 0.5110 & 0.5148 & 0.3900 & 0.6929 & 0.9552 & 0.9671 & 0.9672 & \textbf{0.9686}\\
$V_2$   & 0.7107 & 0.7085 & 0.3669 & 0.6670 & 0.8817 & 0.9049 & 0.9038 & \textbf{0.9054}\\
$V_3$   & 0.7134 & 0.7139 & 0.3345 & 0.7732 & 0.9785 & 0.9854 & 0.9860 & \textbf{0.9866}\\
$V_4$   & 0.8980 & 0.8960 & 0.6568 & 0.9034 & 0.8438 & 0.8961 & 0.9101 & \textbf{0.9174}\\
$V_5$   & 0.7852 & 0.7842 & 0.3820 & 0.9260 & 0.9260 & 0.9774 & 0.9785 & \textbf{0.9799}\\
$V_6$   & 0.3608 & 0.3638 & 0.3123 & 0.6758 & 0.8907 & 0.9104 & 0.9210 & \textbf{0.9229}\\
$V_7$   & 0.7348 & 0.7330 & 0.5565 & 0.7875 & 0.9260 & 0.9303 & 0.9293 & \textbf{0.9394}\\
$V_9$   & 0.5436 & 0.5447 & 0.3296 & 0.8511 & 0.9881 & 0.9915 & 0.9916 & \textbf{0.9926}\\
$V_{10}$& 0.7601 & 0.7602 & 0.4100 & 0.6744 & 0.9673 & 0.9793 & 0.9796 & \textbf{0.9861}\\
\bottomrule
\end{tabular}

    \label{tab:summary_R2}
\end{table}

\vspace{-0.3cm}


Overall, the best model performance is provided by \gls{XGB}.
Thus, we select this model to represent the data-driven function $g$ for the remainder of this paper. 
To obtain a complete picture of the model performance, Table~\ref{tab:XGB_detailed} reports the performance scores on both the training and testing datasets for each vessel. 
The corresponding \glspl{CI} are shown in Appendix~\ref{sec:bootstrapping}.


\begin{table}[H]
    \scriptsize
    \centering
    \captionsetup{justification=centering}
    \caption{Detailed report of the \gls{XGB} performance in predicting \gls{FOC}.}
\begin{tabular}{l | ccc | ccc }
\toprule
& \multicolumn{3}{c}{Training} & \multicolumn{3}{c}{Test} \\
& \gls{RMSE} (kg/h) & \gls{MAE} (kg/h)    & $\text{R}^2$ score & \gls{RMSE} (kg/h) & \gls{MAE} (kg/h)   & $\text{R}^2$ score \\
\midrule
$V_1$   & 2.0070   & 1.2537   & 0.9973   & 6.2752   & 6.1702   & 0.9686 \\
$V_2$   & 3.8249   & 4.8752   & 0.9806   & 7.7433   & 7.5517   & 0.9054 \\
$V_3$   & 0.6237   & 0.6190   & 0.9996   & 4.0179   & 3.9315   & 0.9866 \\
$V_4$   & 2.7452   & 1.7175   & 0.9918   & 6.8672   & 5.5446   & 0.9174 \\
$V_5$   & 1.1335   & 1.1385   & 0.9986   & 6.6745   & 6.8213   & 0.9799 \\
$V_6$   & 1.7083   & 1.8577   & 0.9905   & 8.5962   & 7.8703   & 0.9229 \\
$V_7$   & 3.7351   & 2.0250   & 0.9836   & 8.2419   & 5.8392   & 0.9394 \\
$V_9$   & 2.5520   & 1.4508   & 0.9959   & 4.1277   & 2.9728   & 0.9926 \\
$V_{10}$& 1.8322   & 1.5519   & 0.9973   & 5.4880   & 5.0052   & 0.9861 \\
\midrule
\bottomrule
\end{tabular}
    \label{tab:XGB_detailed}
\end{table}

\vspace{-0.3cm}


After evaluating the \gls{ML} algorithms using performance metrics, we now examine \gls{SHAP} values to further verify the effects of the input variables on the \gls{FOC}.
For this purpose, we apply the theoretical framework introduced in Subsection~\ref{sec:shap} to compute \gls{SHAP} values based on the training datasets, where it becomes advantageous that \gls{XGB} was selected as the underlying model.
The tree‑specific Python implementation enables an efficient computation of \gls{SHAP} values, even for several thousand instances.
These values can then be visualized through \gls{SHAP} dependence plots, where the variable realizations and the \gls{SHAP} values are on the horizontal and vertical axis, respectively.
Figure~\ref{fig:plausi_shap_plots} illustrates six examples of such plots using the data of vessel $V_7$.
Each plot is automatically colored based on the strongest interaction with another variable, and a local smoothing function highlights the main trends of the point cloud. 
Overall, the dependence plots indicate plausible relationships between the input variables and \gls{FOC}. 
For instance, higher values of \textit{Pitch Propeller} or \textit{Wind Speed} are associated with higher \gls{SHAP} values.
This indicates that the difference between the instance-specific \gls{FOC} prediction and the mean prediction increases for higher pitches and wind speeds, which is in line with the expectation.
An opposite relationship is observed for \textit{Weather Temperature}, i.e., the warmer it is, the lower the \gls{SHAP} values become. 
This effect is further amplified when combined with higher \textit{Pitch Propeller} values.


\begin{figure}[H] 
    \includegraphics[width=.49\linewidth]{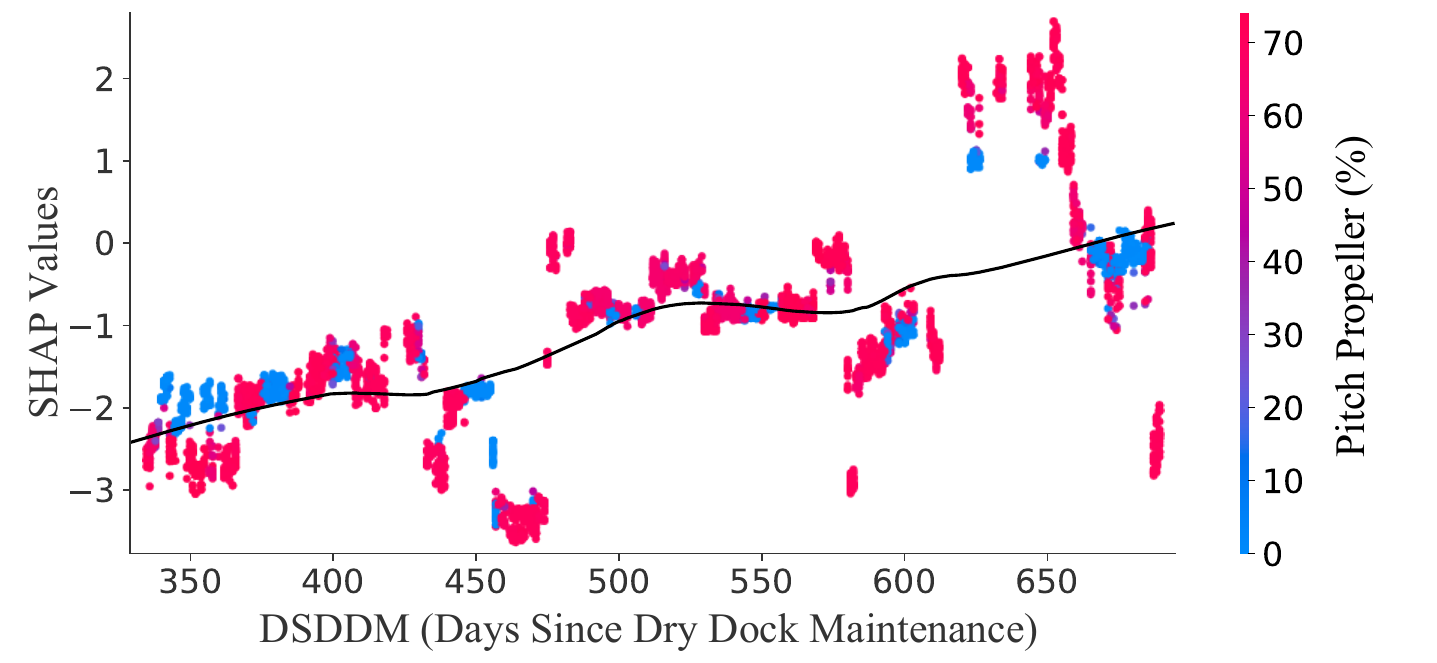}
    \includegraphics[width=.49\linewidth]{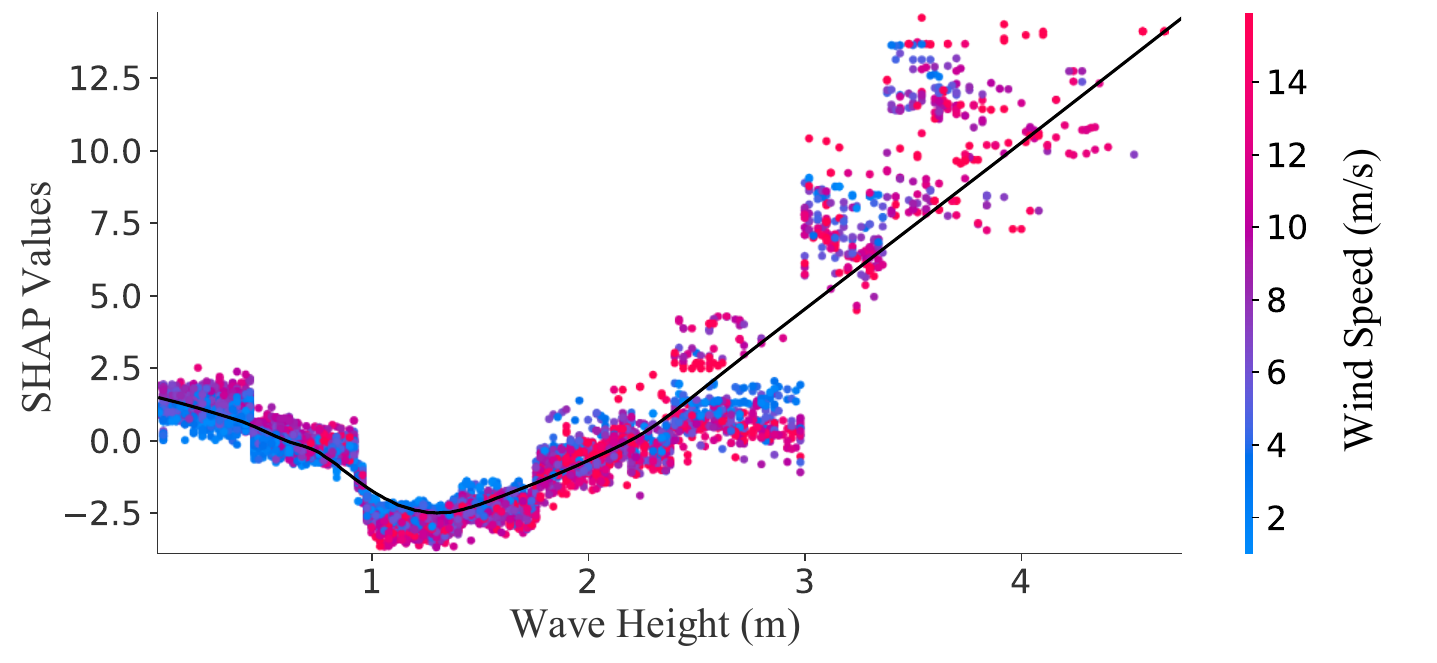}\\
    \includegraphics[width=.49\linewidth]{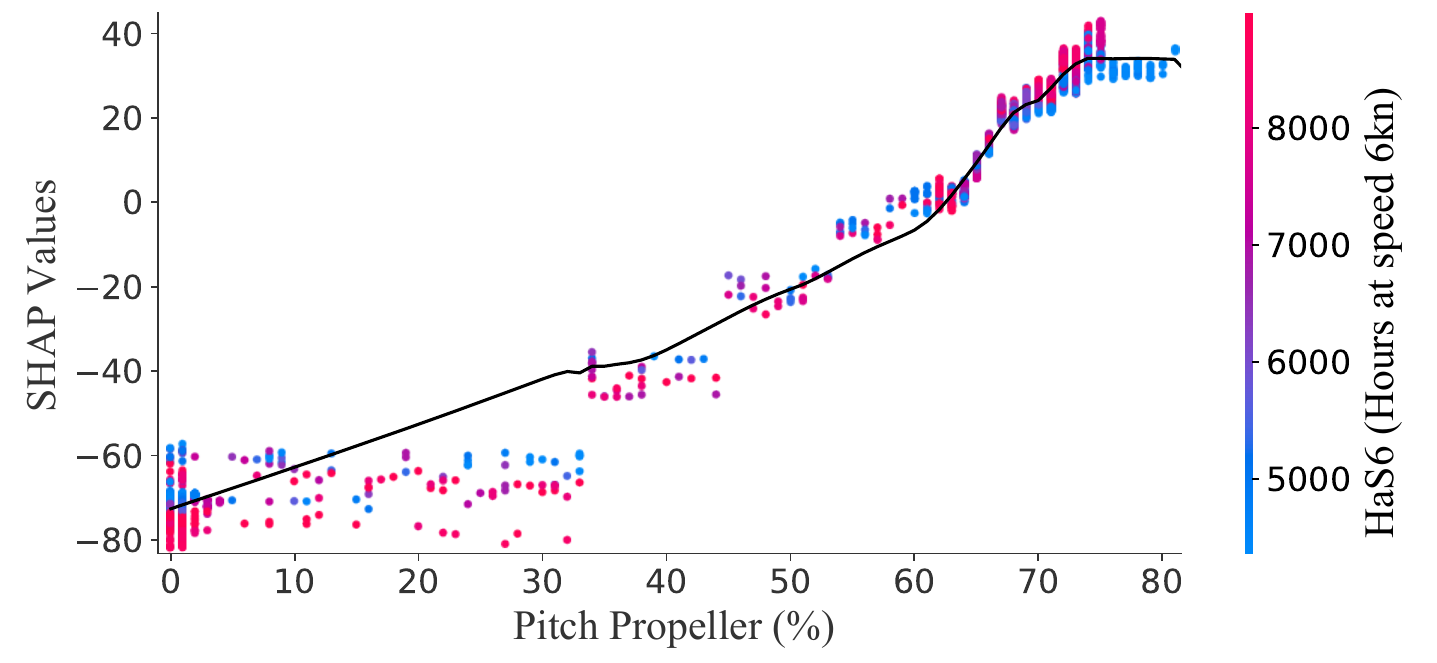}
    \includegraphics[width=.49\linewidth]{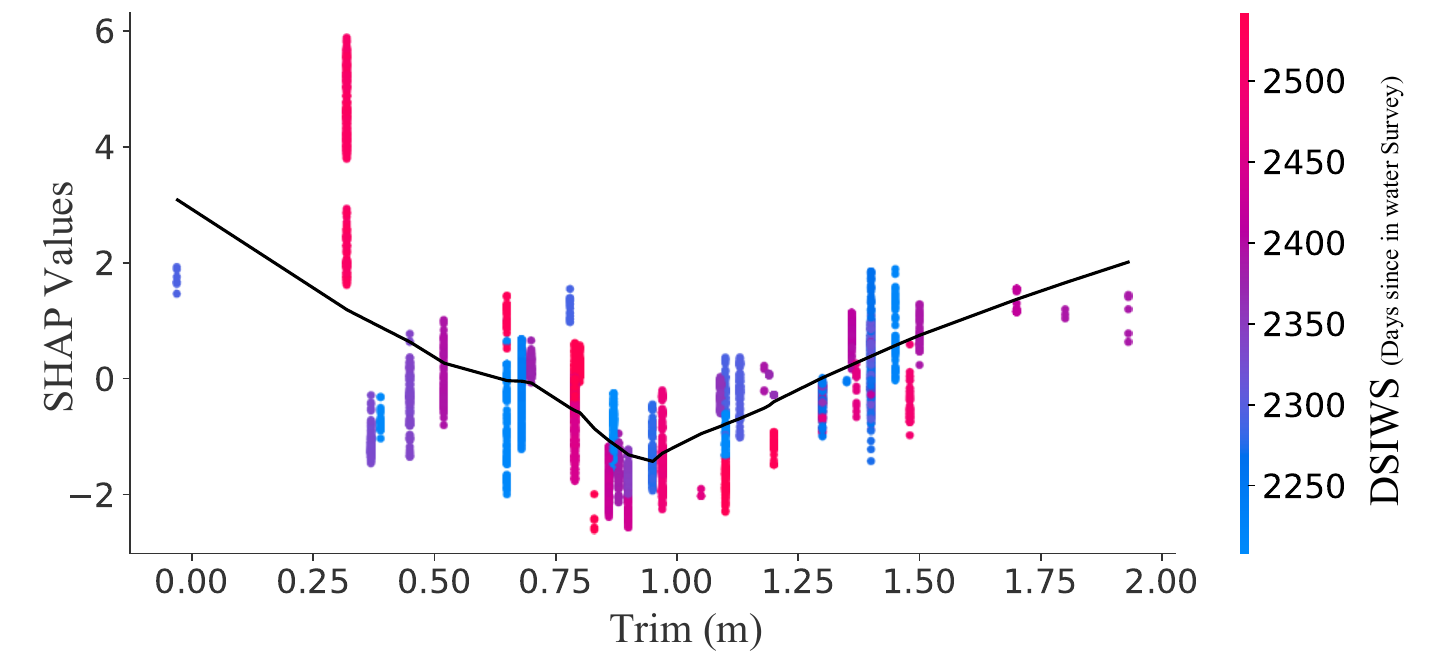}\\
    \includegraphics[width=.49\linewidth]{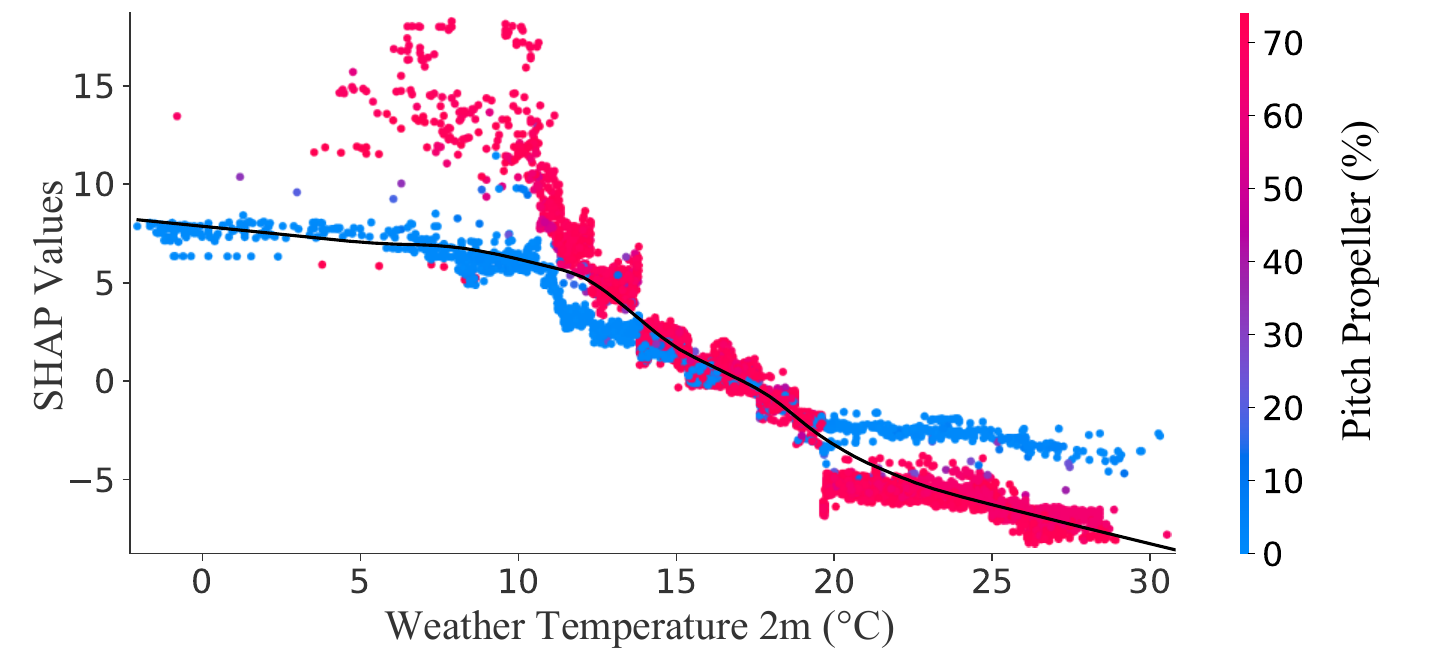}
    \includegraphics[width=.49\linewidth]{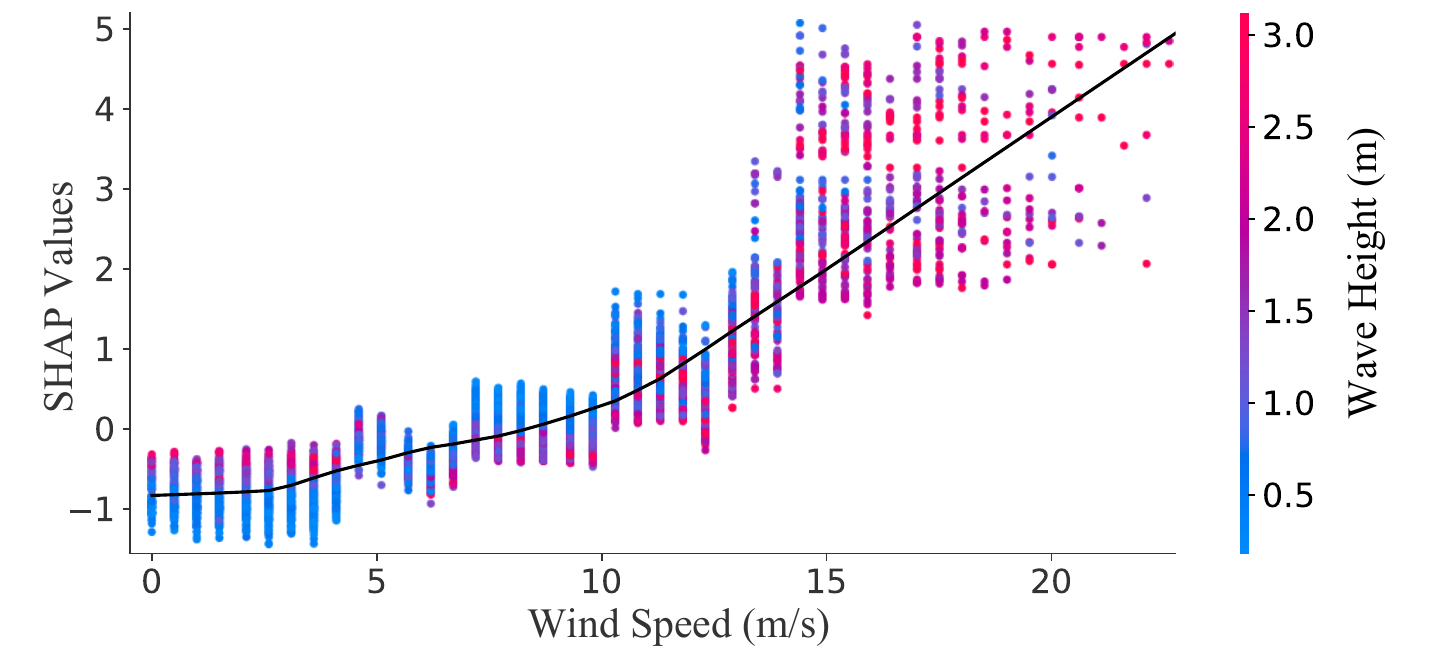}
    \caption{Six example \gls{SHAP} dependence plots using the \gls{XGB} model trained on the vessel $V_7$ data.
    On the horizontal axes are six different input variables and on the vertical axes are corresponding \gls{SHAP} values, which can be interpreted as the contribution of an individual feature value to the difference between the instance-specific \gls{FOC} prediction and the mean prediction.
    The coloring allows to check for interactions to a second input variable.  
    }
    \label{fig:plausi_shap_plots}
\end{figure}


Ultimately, we also examine the dependence plot of the \gls{DSDDM}, which is one of the selected biofouling measures.
Even though its \gls{SHAP} values are more scattered, the local smooth function still indicates the anticipated effect. 
We predominately observe that a higher count of the \gls{DSDDM} coincides with a greater difference between the instance-specific \gls{FOC} prediction and the mean prediction.
At this point, it is worth noting that not all generated \gls{SHAP} plots exhibit patterns as consistent as those shown in Figure~\ref{fig:plausi_shap_plots}.
To explain this behavior, a detailed inspection of the individual split points in the regression trees would be required.
However, this becomes increasingly challenging for deeper tree structures and is beyond the scope of this study.

\vspace{-0.2cm}


\subsection{Empirical Results of Cleaning Schedule Optimization}


After developing a prediction model for the \gls{FOC}, this subsection showcases how the optimization framework introduced in Section~\ref{sec:CSO} can be applied to determine optimal cleaning schedules for the vessels of our industrial partner.
In particular, we address the question of when a ship cleaning should be performed in order to minimize the overall operational expenditures, which consist of the cleaning and fuel costs.
For this purpose, we subset the data of each ship to an approximately four-year period that begins with a dry-dock cleaning event.
This has the advantage that the biofouling measures are all initiated at zero and that the \href{https://open-meteo.com/}{Open-Meteo~\faExternalLink} data is predominantly available. 
We then further cut the data for each vessel into time ordered distinct voyages, which are enumerated $j=1,\dots,n$.
The selected features relating to those voyages (e.g., \textit{Trim}, \textit{Cargo Load}, etc.) are stored in matrices~$\mathbf{X}_{j}$, where each row represents an hourly observation within voyage~$j$.
To predict the \gls{FOC} for a given instance, we select the \gls{XGB} model as data-driven function $g$, which was discussed in more detail in  Subsection~\ref{sec:selection_of_fuel_prediction_function}. 


Based on the previously described setup, it remains to determine the input parameters of the optimization problem shown in \eqref{eq:cleaning-2}.
The biofouling metric itself is denoted by $\boldsymbol{b}_j$ and represents a multidimensional vector, i.e., $\boldsymbol{b}_j \coloneqq \begin{bmatrix}b_{j1}, b_{j2}, b_{j3}, \ldots \end{bmatrix}^\top$, in this study.
In particular, the vector has seven dimensions, defined as $    \boldsymbol{b}_j 
    =
    \big[
    \text{\gls{DSDDM}}, \
    \text{\gls{DSIWS}}, \
    \text{HU},          \
    \text{HaS0},        \
    \text{HaS6},        \
    \text{HaS9},        \
    \text{HaS12}         
    \big]^\top,$
and keeps track of the ship's time spent in different operational states since the last cleaning event.
These values allow the regression models to learn how fouling is built up over time, as shown in Figure~\ref{fig:plausi_shap_plots} for the \gls{DSDDM} variable.
The increase in biofouling metric $\mathbf{B}_j$ is then calculated according to the record of the voyage. 
For example, if voyage $j$ spent ten days at anchor and five days sailing at 10 kn, the vector could contain the following values: $\mathbf{B}_j = [15, 15, 0, 240, 0, 0, 120]^\top$. 
To obtain the overall fuel costs for a voyage, the function $f$ multiplies the \gls{FOC} predicted by $g$ with the fuel price as recorded on the website \url{https://shipandbunker.com/prices}.
If a cleaning is performed before voyage $j$, then the cost $c_j$ is added to the overall costs and $\mathbf{b}_j$ is adjusted accordingly.
Note that in our analysis, $c_j$ is set to $\$10{,}000$ at all ports, which is an estimate that was derived based on values from the literature in Subsection~\ref{sec:hull_cleaning} and can be fine-tuned in future applications. 
In this manner, we have defined all components for the problem of choosing the optimal cleaning schedule based on an instance $(\cc,\xx, \mathbf{B},f)$ of \eqref{eq:cleaning-2}.


To implement the brute-force search and dynamic programming algorithm, introduced in Section~\ref{sec:CSO}, we utilized Python~3.12. 
The most up-to-date code can be accessed through the project's GitHub repository \href{https://github.com/SamuelWardPhD/Cleaning-Schedule-Optimization}{Cleaning-Schedule-Optimization \faExternalLink}, but a legacy code is also provided in Appendix~\ref{sec:python_imple}.
Note that since the vessel data contains confidential information, it is not included in this repository.
In our local version, the data of our industry partner is efficiently loaded and manipulated using the \texttt{Pandas} library, and all linear algebra is computed through \texttt{NumPy}. 
Based on this setup, we can then run both algorithms on the data for each ship. 
The brute-force search, i.e., Algorithm~\ref{alg:brute_force}, is given an hour to run but always failed to terminate in our case study. 
In contrast, the dynamic programming approach consistently terminated in under two minutes.
Therefore, the remainder of this subsection will only focus on the results of Algorithm~\ref{alg:dynamic_programming}.


Figure~\ref{fig:fuel_savings} summarizes the optimization results in terms of fuel savings across the vessels of our industry partner. 
Overall, in seven out of nine cases, the algorithm produces a cleaning schedule that includes at least one additional cleaning event over the four-year period.
In four out of nine cases, the algorithm even outputs a second cleaning event within this time period.
For the vessels $V_2$, $V_3$, $V_5$, $V_6$, $V_7$, $V_9$, $V_{10}$, the optimized cleaning schedules then translate into a reduction of fuel consumption by 2.9\%, 2.1\%, 1.9\%, 3.2\%, 5.0\%, 3.5\%, 1.3\%, respectively.
Only for the two vessels, $V_1$ and $V_4$, the algorithm could not recommend an additional cleaning as the corresponding fuel savings would have bee too small to offset the cleaning costs.


\begin{figure}[H]
    \centering
    \includegraphics[width = 4in]{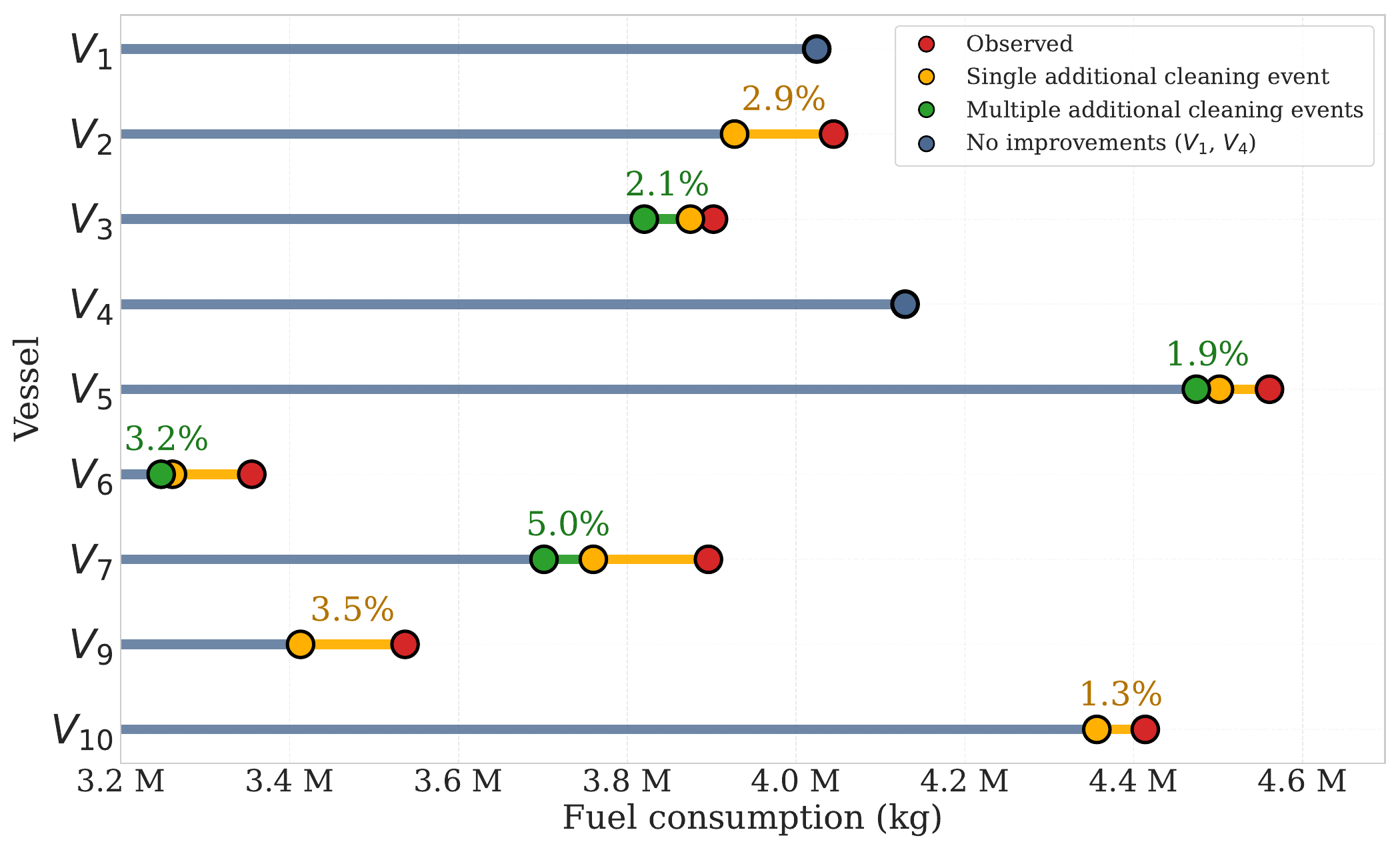}
    \caption{The fuel savings relative to the overall fuel consumption of the nine vessels in our study.}
\label{fig:fuel_savings}
\end{figure}

\vspace{-0.3cm}


As next step, the net savings of the company are examined based on the seven vessels, where the reduction in fuel costs outweighs the costs of cleaning. 
Specifically, our numerical results show that if the company had performed one extra cleaning on each of the aforementioned vessels during the four-year study period, this would have resulted in savings of approximately \$315{,}000. 
To visualize this financial impact, Table~\ref{tab:cleaning_optimisation_A} provides a detailed breakdown of the algorithmic output for the vessels $V_2$, $V_3$, $V_5$ and $V_9$.
It is interesting to note that a company could reasonably brute force a local solution to the problem for exactly one cleaning event without the innovation of Algorithm~\ref{alg:dynamic_programming}, which is shown for $V_3$ and $V_5$.
However, this naive approach would not unfold the full potential to save fuel and corresponding $\text{CO}_2$ emissions.
For instance, the results of $V_3$ demonstrate that fuel consumption would then only be reduced by 0.7\% instead of 2.1\%.


\vspace{-0.2cm}

\begin{table}[H]
\scriptsize
\caption{The output of our cleaning schedule optimization for four of the nine vessels.  
The percentages show the predicted savings in kilograms of fuel consumption (resp. costs in USD) as a percentage of the observed fuel consumption (resp. costs in USD).}
\label{tab:cleaning_optimisation_A}
\centering
\begin{tabular}{rrrrr}
\toprule
                         & $V_2$        & $V_3$         & $V_5$         & $V_9$      \\
    \midrule
\textbf{Parameterisation}\\
    Number of voyages   & 125           & 103           & 124           & 71      \\
    Hours of sailing    & 30,458        & 28,062        & 35,301        & 28,019  \\
    Start date          & 17 Jun 2021   & 14 Sep 2020   & 24 Apr 2020   & 07 Sep 2021  \\
    End date            & 28 Dec 2024   & 14 Sep 2024   & 24 Apr 2024   & 28 Dec 2024  \\
    \midrule
\textbf{No additional cleaning}\\
    Compute time (s)    & 1              & 1              & 1                 & 1             \\
    Fuel (kg)           & 4,044,410.50   & 3,902,133.58   & 4,560,798.18     & 3,536,718.12  \\
    Cost (\$)           & \$3,148,573.58 & \$3,037,810.99  & \$3,550,581.39   & \$2,753,335.06  \\
    \midrule
\textbf{One additional cleaning}\\
    Compute time (s)    & 70 & 54  & 65 & 25 \\
    Cleaning date       & 19 Jul 2022       & 05 Dec 2021    & 13 Jun 2022      & 12 Mar 2023  \\
    Fuel (kg)           & 3,927,073.92      & 3,874,779.62   & 4,501,393.87     & 3,413,095.09  \\
    Cost (\$)           & \$3,067,227.05    & \$3,026,515.94 & \$3,514,335.12   & \$2,667,094.52  \\
    $\Delta$ Fuel (kg)  & (2.9\%) \hfill 117,336.58   & (0.7\%) \hfill 27,353.96   & (1.3\%) \hfill 59,404.32      & (3.5\%) \hfill 123,623.04  \\
    $\Delta$ Cost (\$)  & (2.6\%) \hfill \$81,346.53  & (0.4\%) \hfill \$11,295.06 & (1.0\%) \hfill \$36,246.26    & (3.1\%) \hfill \$86,240.54  \\
    \midrule
\textbf{Multiple additional cleaning}\\
    Compute time (s)        & 70       & 54                 & 66                     &25 \\
    Cleaning dates (year)   & -        & 2021, 2023         & 2021, 2022, 2023        & - \\
    Fuel (kg)               & -        & 3,820,188.77       & 4,471,948.72          & - \\
    Cost (\$)               & -        & \$2,984,016.96     & \$3,511,412.08        & - \\
    $\Delta$ Fuel (kg)      & -        & (2.1\%)\hfill 81,944.81   & (1.9\%) 88,849.46    & - \\
    $\Delta$ Cost (\$)      & -        & (1.8\%)\hfill \$53,794.03 & (1.1\%) 39,169.31  & - \\
    \midrule
\bottomrule
\end{tabular}
\end{table}

\vspace{-0.2cm}


Altogether, the results of our study demonstrate that cleaning vessels more regularly does have a financial benefit for our industrial partner. 
Furthermore, performing those cleanings according to an optimal solution of \eqref{eq:cleaning-2} provides significant financial gains over a more naive schedule.  
In the future, when the company can provide a forecast of the upcoming voyages, Algorithm~\ref{alg:dynamic_programming} can then be applied to obtain optimal dates to schedule the next cleanings.

\vspace{-0.3cm}


\section{Conclusions}\label{sec:conclusion}


In this paper, we presented a novel optimization framework that determines vessel-cleaning schedules dynamically based on real-world voyage data. 
Compared to the fixed-time policies commonly used in industry, this framework not only quantifies the loss in fuel efficiency due to hull fouling mathematically but also adapts cleaning decisions to the vessel’s operational state. 
Specifically, the core components of the methodology are (i) a data-driven function that predicts vessel fuel consumption based on several input variables, including approximations of hull fouling, and (ii) an objective function that induces vessel cleaning when the associated costs can be offset by the resulting increase in fuel efficiency. 
Theoretically, we proved that the most general formulation of the \gls{CSO} problem is NP-hard, a brute-force search guarantees finding a globally optimal solution with exactly $n \cdot 2^n$ calls to $f$, and that no other deterministic algorithm can find an optimum with fewer calls. 
To ensure practicality of the approach for real‑world deployment, we introduced reasonable structural assumptions under which the problem becomes solvable in polynomial time via dynamic programming. 
In particular, we demonstrated that the adjusted framework satisfies the optimal substructure property, which enables the programming approach presented in Algorithm~\ref{alg:dynamic_programming} and ensures computational efficiency even for hourly data over several years.


In our numerical analysis, we evaluated the performance of the proposed optimization framework using real-world vessel data. 
Through a cooperation agreement with a UK‑based shipping company, we obtained access to voyage records from ten tramp‑trading vessels.
In the academic literature, these datasets had never been used and analyzed before, and offered more vessel-specific fouling profiles than data from ships operating on fixed schedules and routes.
This enabled us to contribute a novel performance comparison of several \gls{ML} techniques deployed for each vessel to predict fuel consumption and evaluated as candidates for the data-driven function in our optimization framework.
Across several linear and non-linear models, the tree-based approaches \gls{RF}, \gls{ET} and \gls{XGB} achieved the best performance.
In contrast, the linear models, \gls{LinReg} an \gls{LASSO}, generated much worse predictions, which pointed to substantial non-linear patterns and interaction effects in the underlying data.
This supposition was further supported by \gls{SHAP} dependence plots generated for each vessel based on the \gls{XGB} model, which outperformed the other approaches in terms of \gls{RMSE}, \gls{MAE} and $\text{R}^2$. 
In this part of the analysis, we also examined the effects of the variables constructed to approximate hull fouling and found predominately plausible impacts on fuel consumption, providing empirical evidence for the usefulness of the variables.
After selecting \gls{XGB} as data-driven function for the \gls{CSO}, we then also applied the dynamic programming algorithm to the data and derived vessel specific cleaning schedules. 
Over a time period of four years, these schedules showed that our optimization framework can reduce fuel consumption by up to 5\%, even when accounting for the costs of one or two additional cleaning events.


The empirical results confirm that the proposed optimization framework provides a meaningful step towards data‑driven cleaning schedules for sustainable and profitable ship operation.
Despite these promising results, several directions for future research and development remain.
For instance, the data-driven fuel prediction function could be combined with physical models to further improve predictive accuracy. 
Similarly, the approximation of hull fouling could be enhanced by incorporating and evaluating images from inexpensive underwater cameras. 
In terms of data, the performance of the optimization framework could also be assessed using ship data with repeating destinations, where fouling patterns are likely more predictable.


\vspace{-0.3cm}

\section*{Acknowledgments}

The authors would like to thank Carisbrooke Shipping Limited\footnote{
\href{https://carisbrooke.co/}{Carisbrooke Shipping Limited~\faExternalLink}, 38 Medina Road, Cowes, Isle of Wight, PO31 7DA, United Kingdom.\\
},
represented by Captain Simon Merritt and Natalia Walker, for the pleasant collaboration and the provided data. 

Furthermore, the authors would like to thank Selin Damla Ahipa{\c{s}}ao{\u{g}}lu for helpful remarks and discussions on the development of the dynamic programming approach.

\vspace{-0.2cm}


\addcontentsline{toc}{section}{References}

\setcitestyle{numbers}
\renewcommand{\bibnumfmt}[1]{#1.}

\bibliographystyle{dcu}
\setlength{\bibsep}{4.0pt}
\bibliography{03_references}

@inproceedings{MEPC_Resolutions2023,
    author = {IMO},
    title = {RESOLUTION {MEPC}.377(80) 2023 IMO STRATEGY ON REDUCTION OF GHG EMISSIONS FROM SHIPS},
    booktitle = {Marine Environment Protection Committee (MEPC) Resolutions},
    year = {2023},
    note = {\url{https://wwwcdn.imo.org/localresources/en/KnowledgeCentre/IndexofIMOResolutions/MEPCDocuments/MEPC.377(80).pdf}},
}

@inproceedings{IMO2016,
    author = {IMO},
    title = {RESOLUTION {MEPC}.278(70) AMENDMENTS TO THE ANNEX OF THE PROTOCOL OF 1997 TO AMEND THE INTERNATIONAL CONVENTION FOR THE PREVENTION OF POLLUTION FROM SHIPS, 1973, AS MODIFIED BY THE PROTOCOL OF 1978 RELATING THERETO},
    booktitle = {Marine Environment Protection Committee (MEPC)},
    year = {2016},
    note = {\url{https://wwwcdn.imo.org/localresources/en/KnowledgeCentre/IndexofIMOResolutions/MEPCDocuments/MEPC.278(70).pdf}},
}

@inproceedings{MRV,
    author = {EU},
    title = {Commission Delegated Regulation (EU) 2016/2071},
    booktitle = {Official Journal of the European Union},
    year = {2016},
    note = {\url{http://data.europa.eu/eli/reg_del/2016/2071/oj}},
}

@article{Adland2018,
         title = {The energy efficiency effects of periodic ship hull cleaning},
         journal = {Journal of Cleaner Production},
         volume = {178},
         number = {},
         pages = {1--13},
         year = {2018},
         note = {\url{https://doi.org/10.1016/j.jclepro.2017.12.247}},
         author = {Roar Adland and Pierre Cariou and Haiying Jia and Francois-Charles Wolff}
}

@article{AGAND2023,
         title = {Fuel consumption prediction for a passenger ferry using machine learning and in-service data: A comparative study},
         journal = {Ocean Engineering},
         volume = {284},
         number = {},
         pages = {115271},
         year = {2023},
         note = {\url{https://doi.org/10.1016/j.oceaneng.2023.115271}},
         author = {Pedram Agand and Allison Kennedy and Trevor Harris and Chanwoo Bae and Mo Chen and Edward J. Park}
}

@Article{alsawaftah2022,
         AUTHOR = {AlSawaftah, Nour and Abuwatfa, Waad and Darwish, Naif and Husseini, Ghaleb A.},
         TITLE = {{A Review on Membrane Biofouling: Prediction, Characterization, and Mitigation}},
         JOURNAL = {Membranes},
         VOLUME = {12},
         YEAR = {2022},
         NUMBER = {12},
         pages = {1271},
         note = {\url{https://doi.org/10.3390/membranes12121271}}
}

@article{bakka2022,
         title={Estimating the effect of biofouling on ship shaft power based on sensor measurements},
         author={Bakka, Haakon and Rognebakke, Hanne and Glad, Ingrid and Haff, Ingrid Hob{\ae}k and Vanem, Erik},
         journal={Ship Technology Research},
         pages={209--221},
         year={2022},
         volume = {70},
         number = {3},
         note={\url{https://doi.org/10.1080/09377255.2022.2159108}}
}

@article{bergstra2012,
         author  = {James Bergstra and Yoshua Bengio},
         title   = {{Random Search for Hyper-Parameter Optimization}},
         journal = {Journal of Machine Learning Research},
         year    = {2012},
         volume  = {13},
         number  = {10},
         pages   = {281--305},
         note     = {\url{http://jmlr.org/papers/v13/bergstra12a.html}}
}

@book{Bishop2006,
      author = {Bishop, Christopher M.},
      title = {{Pattern Recognition and Machine Learning}},
      series = {Information Science and Statistics},
      publisher = {Springer},
      year = {2006},
      note = {ISBN 978-0-387-31073-2},
      address = {New York},
      edition   = {1st}
}

@article{bloomfield2021,
         title={Automating the assessment of biofouling in images using expert agreement as a gold standard},
         author={Bloomfield, Nathaniel J and Wei, Susan and A. Woodham, Bartholomew and Wilkinson, Peter and Robinson, Andrew P},
         journal={Scientific Reports},
         volume={11},
         number = {},
         pages={2739},
         year={2021},
         note= {\url{https://doi.org/10.1038/s41598-021-81011-2}}
}

@article{botchkarev2019,
         title={{A New Typology Design of Performance Metrics to Measure Errors in Machine Learning Regression Algorithms}},
         author={Botchkarev, Alexei},
         journal={Interdisciplinary Journal of Information, Knowledge, and Management},
         volume={14},
         number = {},
         pages={45--79},
         year={2019},
         note={\url{https://doi.org/10.28945/4184}}
}

@incollection{Bressy2009,
              title = {18 - Tin-free self-polishing marine antifouling coatings},
              author = {C. Bressy and A. Margaillan and F. Fa{\"y} and I. Linossier and K. R{\'e}hel},
              editor = {Claire Hellio and Diego Yebra},
              booktitle = {Advances in Marine Antifouling Coatings and Technologies},
              publisher = {Woodhead Publishing},
              pages = {445-491},
              year = {2009},
              address = {Cambridge, UK},
              series = {Woodhead Publishing Series in Metals and Surface Engineering},
              note = {\url{https://doi.org/10.1533/9781845696313.3.445}},
}

@article{chai2014,
         title={{Root mean square error (RMSE) or mean absolute error (MAE)? -- Arguments against avoiding RMSE in the literature}},
         author={Chai, Tianfeng and Draxler, Roland R},
         journal={Geoscientific Model Development},
         volume={7},
         number={3},
         pages={1247--1250},
         year={2014},
         note={\url{https://doi.org/10.5194/gmd-7-1247-2014}}
}

@article{CHAPMAN2007,
         title = {Transport and climate change: a review},
         journal = {Journal of Transport Geography},
         volume = {15},
         number = {5},
         pages = {354--367},
         year = {2007},
         note = {\url{https://doi.org/10.1016/j.jtrangeo.2006.11.008}},
         author = {Lee Chapman},
}

@inproceedings{Chen2016,
               author = {Chen, Tianqi and Guestrin, Carlos},
               title = {{XGBoost: A Scalable Tree Boosting System}},
               year = {2016},
               publisher = {Association for Computing Machinery},
               address = {New York, NY},
               booktitle = {Proceedings of the 22nd ACM SIGKDD International Conference on Knowledge Discovery and Data Mining},
               pages = {785--794},
               numpages = {10},
               location = {San Francisco, California, USA},
               series = {KDD '16},
               note = {\url{https://doi.org/10.1145/2939672.2939785}},
}

@article{CORADDU2017,
         title = {Vessels fuel consumption forecast and trim optimisation: A data analytics perspective},
         journal = {Ocean Engineering},
         volume = {130},
         number = {},
         pages = {351--370},
         year = {2017},
         note = {\url{https://doi.org/10.1016/j.oceaneng.2016.11.058}},
         author = {Andrea Coraddu and Luca Oneto and Francesco Baldi and Davide Anguita}
}

@article{CORADDU2019,
         title = {A novelty detection approach to diagnosing hull and propeller fouling},
         journal = {Ocean Engineering},
         volume = {176},
         number = {},
         pages = {65--73},
         year = {2019},
         note = {\url{https://doi.org/10.1016/j.oceaneng.2019.01.054}},
         author = {Andrea Coraddu and Serena Lim and Luca Oneto and Kayvan Pazouki and Rose Norman and Alan John Murphy},
}

@article{CORADDU2019b,
         title = {Data-driven ship digital twin for estimating the speed loss caused by the marine fouling},
         journal = {Ocean Engineering},
         volume = {186},
         number = {},
         pages = {106063},
         year = {2019},
         note = {\url{https://doi.org/10.1016/j.oceaneng.2019.05.045}},
         author = {Andrea Coraddu and Luca Oneto and Francesco Baldi and Francesca Cipollini and Mehmet Atlar and Stefano Savio}
}

@book{Cormen2022,
      title={Introduction to Algorithms},
      author={Cormen, Thomas H and Leiserson, Charles E and Rivest, Ronald L and Stein, Clifford},
      year={2022},
      address = {Cambridge, Massachusetts},
      edition = {4th},
      publisher={The MIT Press},
      note={ISBN 978-0262046305}
}

@article{Dantzig1957,
         title={{Discrete-Variable Extremum Problems}},
         author={Dantzig, George B},
         journal={Operations Research},
         volume={5},
         number={2},
         pages={266--288},
         year={1957},
         publisher={INFORMS},
         note = {\url{https://doi.org/10.1287/opre.5.2.266}}
}

@book{Darwin1851,
      title = {A monograph on the sub-class Cirripedia, with figures of all the species},
      series ={Ray Society Publication},
      volume = {1},
      address = {London, UK},
      publisher = {Ray Society},
      author = {Darwin, Charles},
      year = {1851},
      note = {\url{https://doi.org/10.5962/bhl.title.2104}},
}

@book{Dasgupta2006,
      title={Algorithms},
      author={Dasgupta, Sanjoy and Papadimitriou, Christos H and Vazirani, Umesh Virkumar},
      year={2006},
      edition = {1st},
      address = {New York, NY, USA},
      publisher={McGraw-Hill Higher Education},
      note={ISBN 978-0073523408}
}

@inproceedings{deHaas2023,
               title={{Power Increase due to Marine Biofouling: a Grey-box Model Approach}},
               author={de Haas, Matthew and Coraddu, Andrea and El Mouhandiz, Abdel-Ali and Dimitra Charisi, Nikoleta and Kana, Austin A.},
               booktitle={Proceedings of the 4th International Conference on Modelling and Optimisation of Ship Energy Systems},
               year={2023},
               note = {\url{https://doi.org/10.59490/moses.2023.661}}
}

@article{DEMIREL2017,
         title = {Predicting the effect of biofouling on ship resistance using CFD},
         journal = {Applied Ocean Research},
         volume = {62},
         number = {},
         pages = {100--118},
         year = {2017},
         note = {\url{https://doi.org/10.1016/j.apor.2016.12.003}},
         author = {Yigit Kemal Demirel and Osman Turan and Atilla Incecik}
}

@article{DEMIREL2019,
         title = {Practical added resistance diagrams to predict fouling impact on ship performance},
         journal = {Ocean Engineering},
         volume = {186},
         number = {},
         pages = {106112},
         year = {2019},
         note = {\url{https://doi.org/10.1016/j.oceaneng.2019.106112}},
         author = {Yigit Kemal Demirel and Soonseok Song and Osman Turan and Atilla Incecik}
}

@Article{Din2023,
         AUTHOR = {Din, Ashraf Ud and Ur Rahman, Imran and Vega-Muñoz, Alejandro and Elahi, Ehsan and Salazar-Sepúlveda, Guido and Contreras-Barraza, Nicolás and Alhrahsheh, Rakan Radi},
         TITLE = {How Sustainable Transportation Can Utilize Climate Change Technologies to Mitigate Climate Change},
         JOURNAL = {Sustainability},
         VOLUME = {15},
         YEAR = {2023},
         NUMBER = {12},
         ARTICLE-NUMBER = {9710},
         note = {\url{https://doi.org/10.3390/su15129710}}
}

@article{dunn1961,
         title={{Multiple Comparisons Among Means}},
         author={Dunn, Olive Jean},
         journal={Journal of the American Statistical Association},
         volume={56},
         number={293},
         pages={52--64},
         year={1961},
         note={\url{https://doi.org/10.1080/01621459.1961.10482090}}
}

@article{Fagerholt2010,
         author = {Fagerholt, K and Laporte, G and Norstad, I},
         title = {Reducing fuel emissions by optimizing speed on shipping routes},
         journal = {Journal of the Operational Research Society},
         volume = {61},
         number = {3},
         pages = {523--529},
         year = {2010},
         note = {\url{https://doi.org/10.1057/jors.2009.77}}
}

@book{Fahrmeir2021,
      title={{Regression: Models, Methods and Applications}},
      author={Fahrmeir, Ludwig and Kneib, Thomas and Lang, Stefan and Marx, Brian D.},
      edition={2nd},
      year={2021},
      publisher={Springer},
      address={Berlin},
      note={\url{https://doi.org/10.1007/978-3-662-63882-8}}
}

@article{first2021,
         title={Rapid quantification of biofouling with an inexpensive, underwater camera and image analysis},
         author={First, Matthew R and Riley, Scott C and Islam, Kazi Aminul and Hill, Victoria and Li, Jiang and Zimmerman, Richard C and Drake, Lisa A},
         journal={Management of Biological Invasions},
         volume={12},
         number={3},
         pages={599--617},
         year={2021},
         note={\url{https://doi.org/10.3391/mbi.2021.12.3.06}}
}

@article{Floerl2003,
         author = {Floerl, Oliver and Inglis, Graeme J.},
         title = {Boat harbour design can exacerbate hull fouling},
         journal = {Austral Ecology},
         volume = {28},
         number = {2},
         pages = {116--127},
         note = {\url{https://doi.org/10.1046/j.1442-9993.2003.01254.x}},
         year = {2003}
}

@book{fox2015,
      title={{Applied Regression Analysis and Generalized Linear Models}},
      author={Fox, John},
      year={2015},
      edition= {3rd},
      publisher={SAGE Publications},
      address={Thousand Oaks, CA}
}

@book{geron2022,
      title={{Hands-on Machine Learning with Scikit-Learn, Keras, and TensorFlow}},
      author={G{\'e}ron, Aur{\'e}lien},
      year={2022},
      edition = {3rd},
      publisher={O'Reilly Media},
      address = {Sebastopol, CA}
}

@book{Gianfagna2021,
      author = {Gianfagna, Leonida and Di Cecco, Antonio},
      title = {{Explainable AI with Python}},
      publisher = {Springer},
      year = {2021},
      note = {\url{https://doi.org/10.1007/978-3-030-68640-6}},
      address = {Cham},
      edition   = {1st}
}

@article{GKEREKOS2019,
         title = {{Machine learning models for predicting ship main engine Fuel Oil Consumption: A comparative study}},
         journal = {Ocean Engineering},
         volume = {188},
         pages = {106282},
         year = {2019},
         note = {\url{https://doi.org/10.1016/j.oceaneng.2019.106282}},
         author = {Christos Gkerekos and Iraklis Lazakis and Gerasimos Theotokatos}
}

@inproceedings{GorenHuber2017,
               author = {Goren Huber, Lilach and Kunz, Simon and Dettling, Marcel},
               title = {{Condition-Based Maintenance Decision Making: a Practical Approach for Marine Vessels}},
               year = {2017},
               publisher = {Jost Institute for Tribotechnology},
               address = {Lancashire, UK},
               booktitle = {30th Conference for Condition Monitoring and Diagnostic Engineering Managemen (COMADEM)},
               pages = {377--386},
               note = {\url{https://digitalcollection.zhaw.ch/handle/11475/8014}},
}

@misc{Granhag2023,
      title={Best practice for cleaning of ship hulls},
      author={Granhag, Lena and Javadi, Mehran and Ytreberg, Erik},
      year={2023},
      note={\emph{Swedish Maritime and Water Authority}.  \url{https://research.chalmers.se/publication/535739}}
}

@article{GUPTA2022,
         title = {Ship performance monitoring using machine-learning},
         journal = {Ocean Engineering},
         volume = {254},
         number = {},
         pages = {111094},
         year = {2022},
         note = {\url{https://doi.org/10.1016/j.oceaneng.2022.111094}},
         author = {Prateek Gupta and Adil Rasheed and Sverre Steen}
}

@article{handayani2023,
         title={{Navigating Energy Efficiency: A Multifaceted Interpretability of Fuel Oil Consumption Prediction in Cargo Container Vessel Considering the Operational and Environmental Factors}},
         author={Handayani, Melia Putri and Kim, Hyunju and Lee, Sangbong and Lee, Jihwan},
         journal={Journal of Marine Science and Engineering},
         volume={11},
         number={11},
         pages={2165},
         year={2023},
         note={\url{https://doi.org/10.3390/jmse11112165}}
}

@misc{hansen2012gameover,
      author    = {James Hansen},
      title     = {Game Over for the Climate},
      publisher = {The New York Times},
      year      = {2012},
      month     = {May},
      day       = {9},
      note      = {The New York Times. Accessed: 2025-12-29. \url{https://www.nytimes.com/2012/05/10/opinion/game-over-for-the-climate.html}},
}

@book{hastie2009,
      title={{The Elements of Statistical Learning: Data Mining, Inference, and Prediction}},
      author={Hastie, Trevor and Tibshirani, Robert and Friedman, Jerome H},
      edition={2nd},
      series = {Springer Series in Statistics},
      year={2009},
      note={\url{https://doi.org/10.1007/b94608}},
      address ={New York, NY},
      publisher={Springer}
}

@article{HUA2018,
         title = {En-route operated hydroblasting system for counteracting biofouling on ship hull},
         journal = {Ocean Engineering},
         volume = {152},
         number = {},
         pages = {249--256},
         year = {2018},
         note = {\url{https://doi.org/10.1016/j.oceaneng.2018.01.050}},
         author = {Jian Hua and Yin-Sen Chiu and Chin-Yang Tsai}
}

@article{HUANG2022,
         title = {Machine learning in sustainable ship design and operation: A review},
         journal = {Ocean Engineering},
         volume = {266},
         number = {},
         pages = {112907},
         year = {2022},
         note = {\url{https://doi.org/10.1016/j.oceaneng.2022.112907}},
         author = {Luofeng Huang and Blanca Pena and Yuanchang Liu and Enrico Anderlini}
}

@Article{Huotari2021,
         AUTHOR = {Huotari, Janne and Manderbacka, Teemu and Ritari, Antti and Tammi, Kari},
         TITLE = {{Convex Optimisation Model for Ship Speed Profile: Optimisation under Fixed Schedule}},
         JOURNAL = {Journal of Marine Science and Engineering},
         VOLUME = {9},
         YEAR = {2021},
         NUMBER = {7},
         pages = {730},
         note = {\url{https://doi.org/10.3390/jmse9070730}},
}

@online{IMO2023,
        author    = {{IMO}},
        title     = {{Marine Environment: Biofouling [Online]}},
        year      = {2023},
        note      = {Accessed: 2026-01-13, \url{https://www.imo.org/en/ourwork/environment/pages/biofouling.aspx}}
}

@book{James2021,
      title={{An Introduction to Statistical Learning with Applications in R}},
      author={James, Gareth and Witten, Daniela and Hastie, Trevor and Tibshirani, Robert},
      edition={2nd},
      year={2021},
      publisher={Springer},
      address={New York, NY},
      note={\url{https://doi.org/10.1007/978-1-0716-1418-1}}
}

@book{Kamath2021,
      title={{Explainable Artificial Intelligence: An Introduction to Interpretable Machine Learning}},
      author={Kamath, Uday and Liu, John},
      edition={1st},
      year={2021},
      publisher={Springer},
      address={Cham},
      note={\url{https://doi.org/10.1007/978-3-030-83356-5}}
}

@Incollection{Karp1972,
              author="Karp, Richard M.",
              editor="Miller, Raymond E. and Thatcher, James W. and Bohlinger, Jean D.",
              title="Reducibility among Combinatorial Problems",
              bookTitle="Complexity of Computer Computations",
              series="The IBM Research Symposia Series",
              year="1972",
              publisher="Springer",
              address="Boston, MA",
              edition="1st",
              pages="85--103",
              note={\url{https://doi.org/10.1007/978-1-4684-2001-2_9}}
}

@book{Kellerer2004,
      title={Knapsack Problems},
      author={Kellerer, H. and Pferschy, U. and Pisinger, D.},
      edition = {1st},
      year={2004},
      publisher={Springer},
      address = {Berlin, Heidelberg},
      note={\url{https://doi.org/10.1007/978-3-540-24777-7}},
}

@article{LANG2022110387,
         title = {Comparison of supervised machine learning methods to predict ship propulsion power at sea},
         journal = {Ocean Engineering},
         volume = {245},
         number = {},
         pages = {110387},
         year = {2022},
         note = {\url{https://doi.org/10.1016/j.oceaneng.2021.110387}},
         author = {Xiao Lang and Da Wu and Wengang Mao}
}

@article{LAURIE2021,
         title = {Machine learning for shaft power prediction and analysis of fouling related performance deterioration},
         journal = {Ocean Engineering},
         volume = {234},
         number = {},
         pages = {108886},
         year = {2021},
         note = {\url{https://doi.org/10.1016/j.oceaneng.2021.108886}},
         author = {Anastasia Laurie and Enrico Anderlini and Jesper Dietz and Giles Thomas}
}

@inproceedings{Lewis2009,
               author = {Lewis, John A. and Coutts, Ashley D. M.},
               editor={D{\"u}rr, S. and Thomason, J.C.},
               publisher = {John Wiley \& Sons},
               address = {Hoboken, NJ},
               title = {{Biofouling Invasions}},
               booktitle = {Biofouling},
               chapter = {24},
               pages = {348--365},
               note = {\url{https://doi.org/10.1002/9781444315462.ch24}},
               year = {2009},
}

@ARTICLE{Loyola2019,
         author={Loyola-González, Octavio},
         journal={IEEE Access}, 
         title={{Black-Box vs. White-Box: Understanding Their Advantages and Weaknesses From a Practical Point of View}}, 
         year={2019},
         volume={7},
         number = {},
         pages={154096--154113},
         note={\url{https://doi.org/10.1109/ACCESS.2019.2949286}}
}

@inproceedings{Lundberg2017,
               author = {Lundberg, Scott M. and Lee, Su-In},
               title = {A unified approach to interpreting model predictions},
               year = {2017},
               isbn = {9781510860964},
               publisher = {Curran Associates Inc.},
               address = {Red Hook, NY},
               booktitle = {Proceedings of the 31st International Conference on Neural Information Processing Systems},
               pages = {4768--4777},
               numpages = {10},
               series = {NIPS'17}
}

@article{ma2023,
         title={{An Interpretable Gray Box Model for Ship Fuel Consumption Prediction Based on the SHAP Framework}},
         author={Ma, Yiji and Zhao, Yuzhe and Yu, Jiahao and Zhou, Jingmiao and Kuang, Haibo},
         journal={Journal of Marine Science and Engineering},
         volume={11},
         number={5},
         pages={1059},
         year={2023},
         note={\url{https://doi.org/10.3390/jmse11051059}},
}

@article{Mathews1896,
         title={{On the Partition of Numbers}},
         author={Mathews, George B},
         journal={Proceedings of the London Mathematical Society},
         volume={s1-28},
         number={1},
         pages={486--490},
         year={1896},
         note = {\url{https://doi.org/10.1112/plms/s1-28.1.486}}
}

@book{molnar2022,
      title      = {Interpretable Machine Learning},
      author     = {Christoph Molnar},
      year       = {2022},
      subtitle   = {A Guide for Making Black Box Models Explainable},
      publisher  = {Independently published},
      edition    = {2nd},
      note       = {\url{https://christophm.github.io/interpretable-ml-book}}
}

@article{Molnar2008,
         title={Assessing the global threat of invasive species to marine biodiversity},
         author={Molnar, Jennifer L and Gamboa, Rebecca L and Revenga, Carmen and Spalding, Mark D},
         journal={Frontiers in Ecology and the Environment},
         volume={6},
         number={9},
         pages={485--492},
         year={2008},
         note = {\url{https://doi.org/10.1890/070064}}
}

@article{moreira2021,
         title={{Neural Network Approach for Predicting Ship Speed and Fuel Consumption}},
         author={Moreira, L{\'u}cia and Vettor, Roberto and Guedes Soares, Carlos},
         journal={Journal of Marine Science and Engineering},
         volume={9},
         number = {2},
         pages={119},
         year={2021},
         note={\url{https://doi.org/10.3390/jmse9020119}}
}

@incollection{Nakamura2025,
              author="Nakamura, Junji and Ishida, Masayoshi",
              title={{Global Warming and CO2 Emissions}},
              bookTitle="Blueprint for a Methanol Society: Toward Carbon-Neutrality",
              year="2025",
              publisher="Springer Nature",
              edition={1st},
              address="Singapore",
              pages="3--26",
              note={\url{https://doi.org/10.1007/978-981-95-3465-4_1}}
}

@article{Nikolaidis2022,
         author = {Nikolaidis, George and Themelis, Nikos},
         title = {Examining the performance of retrofit measures in real ship operation using data-driven models},
         journal = {Ship Technology Research},
         volume = {69},
         number = {3},
         pages = {170--180},
         year = {2022},
         note = {\url{https://doi.org/10.1080/09377255.2022.2109327}}
}

@article{Oliveira2022,
         title = {A novel tool for cost and emission reduction related to ship underwater hull maintenance},
         journal = {Journal of Cleaner Production},
         volume = {356},
         number = {},
         pages = {131882},
         year = {2022},
         author = {Olivera, D. R. and Lagerstr\"om, M. and Granhag, L. and Werner, S. and Larsson, A. I. and Ytreberg, E.},
         note = {\url{https://doi.org/10.1016/j.jclepro.2022.131882}},
}

@book{Papadimitriou1994,
      author    = {Papadimitriou, Christos H.},
      title     = {Computational Complexity},
      publisher = {Addison-Wesley},
      address   = {Reading, Mass.},
      edition   = {1st},
      year      = {1994},
      note      = {ISBN 978-0201530827}
}

@article{parviainen2018,
         title={How can stakeholders promote environmental and social responsibility in the shipping industry?},
         author={Parviainen, Tuuli and Lehikoinen, Annukka and Kuikka, Sakari and Haapasaari, P{\"a}ivi},
         journal={WMU Journal of Maritime Affairs},
         volume={17},
         number = {},
         pages={49--70},
         year={2018},
         note = {\url{https://doi.org/10.1007/s13437-017-0134-z}}
}

@article{Pedregosa2011,
         author = {Pedregosa, Fabian and Varoquaux, Ga\"{e}l and Gramfort, Alexandre and Michel, Vincent and Thirion, Bertrand and Grisel, Olivier and Blondel, Mathieu and Prettenhofer, Peter and Weiss, Ron and Dubourg, Vincent and Vanderplas, Jake and Passos, Alexandre and Cournapeau, David and Brucher, Matthieu and Perrot, Matthieu and Duchesnay, \'{E}douard},
         title = {Scikit-learn: Machine Learning in Python},
         year = {2011},
         publisher = {JMLR.org},
         volume = {12},
         number = {},
         journal = {Journal of Machine Learning Research},
         pages = {2825--2830},
         numpages = {6},
         note = {\url{https://doi.org/10.5555/1953048.2078195}}
}

@book{ravichandiran2019,
      title={{Hands-On Deep Learning Algorithms with Python}},
      author={Ravichandiran, Sudharsan},
      edition={1st},
      year={2019},
      address ={Birmingham},
      publisher={Packt Publishing},
      note = {ISBN 978-1789344158}
}

@book{Rothman2020,
      title={{Hands-On Explainable AI (XAI) with Python}},
      author={Rothman, Denis},
      year={2020},
      edition = {1st},
      publisher={Packt Publishing},
      address={Birmingham},
      note ={ISBN 978-1800202764}
}

@incollection{Rumala2023,
              title = {{How You Split Matters: Data Leakage and Subject Characteristics Studies in Longitudinal Brain MRI Analysis}},
              author = {Rumala, Dewinda},
              booktitle = {Medical Image Computing and Computer Assisted Intervention -- MICCAI 2023},
              series = {Lecture Notes in Computer Science},
              volume = {14227},
              pages = {235--245},
              year = {2023},
              publisher = {Springer},
              address = {Cham},
              note = {\url{https://doi.org/10.1007/978-3-031-45249-9_23}}
}

@article{Schultz2011,
         title={Economic impact of biofouling on a naval surface ship},
         author={Schultz, Michael P and Bendick, JA and Holm, ER and Hertel, WM},
         journal={Biofouling},
         volume={27},
         number={1},
         pages={87--98},
         year={2011},
         note = {\url{https://doi.org/10.1080/08927014.2010.542809}}
}

@article{Smola2004,
         title={A tutorial on support vector regression},
         author={Smola, Alex J and Sch{\"o}lkopf, Bernhard},
         journal={Statistics and Computing},
         volume={14},
         number={3},
         pages={199--222},
         year={2004},
         note={\url{https://doi.org/10.1023/B:STCO.0000035301.49549.88}}
}

@article{soner2019,
         title={Statistical modelling of ship operational performance monitoring problem},
         author={Soner, Omer and Akyuz, Emre and Celik, Metin},
         journal={Journal of Marine Science and Technology},
         volume={24},
         pages={543--552},
         year={2019},
         note={\url{https://doi.org/10.1007/s00773-018-0574-y}}
}

@article{song2020,
         title={{Review of Underwater Ship Hull Cleaning Technologies}},
         author={Song, C and Cui, W},
         journal={Journal of Marine Science and Application},
         volume={19},
         number = {},
         pages={415--429},
         year={2020},
         note={\url{https://doi.org/10.1007/s11804-020-00157-z}}
}

@article{Srinivasan2007,
         title={Managing the use of copper-based antifouling paints},
         author={Srinivasan, Mridula and Swain, Geoffrey W},
         journal={Environmental Management},
         volume={39},
         pages={423--441},
         year={2007},
         note = {\url{https://doi.org/10.1007/s00267-005-0030-8}}
}

@ARTICLE{Tamburri2020,
         AUTHOR={Tamburri, Mario N.  and Davidson, Ian C.  and First, Matthew R.  and Scianni, Christopher  and Newcomer, Katherine  and Inglis, Graeme J.  and Georgiades, Eugene T.  and Barnes, Janet M.  and Ruiz, Gregory M. },
         TITLE={{In-Water Cleaning and Capture to Remove Ship Biofouling: An Initial Evaluation of Efficacy and Environmental Safety}},
         JOURNAL={Frontiers in Marine Science},
         VOLUME={7},
         YEAR={2020},
         NOTE={\url{https://doi.org/10.3389/fmars.2020.00437}}
}

@article{Tibshirani1996,
         author = {Tibshirani, Robert},
         title = {{Regression Shrinkage and Selection Via the Lasso}},
         journal = {Journal of the Royal Statistical Society: Series B (Methodological)},
         volume = {58},
         number = {1},
         pages = {267--288},
         year = {1996},
         note = {\url{https://doi.org/10.1111/j.2517-6161.1996.tb02080.x}}
}

@article{Townsin2003,
         author = {Townsin, R L},
         title = {{The Ship Hull Fouling Penalty}},
         journal = {Biofouling},
         volume = {19},
         number = {sup1},
         pages = {9--15},
         year = {2003},
         publisher = {Taylor \& Francis},
         note = {\url{https://doi.org/10.1080/0892701031000088535}}
}

@article{UZUN2019,
         title = {Time-dependent biofouling growth model for predicting the effects of biofouling on ship resistance and powering},
         journal = {Ocean Engineering},
         volume = {191},
         number = {},
         pages = {106432},
         year = {2019},
         note = {\url{https://doi.org/10.1016/j.oceaneng.2019.106432}},
         author = {Dogancan Uzun and Yigit Kemal Demirel and Andrea Coraddu and Osman Turan}
}

@article{VALCHEV2022,
         title = {Numerical methods for monitoring and evaluating the biofouling state and effects on vessels’ hull and propeller performance: A review},
         journal = {Ocean Engineering},
         volume = {251},
         number = {},
         pages = {110883},
         year = {2022},
         note = {\url{https://doi.org/10.1016/j.oceaneng.2022.110883}},
         author = {Iliya Valchev and Andrea Coraddu and Miltiadis Kalikatzarakis and Rinze Geertsma and Luca Oneto}
}

@book{wade2020,
      title={{Hands-On Gradient Boosting with XGBoost and scikit-learn}},
      author={Wade, Corey},
      year={2020},
      edition = {1st},
      note = {ISBN 978-1839218354},
      publisher={Packt Publishing},
      address={Birmingham}
}

@article{WANG2023,
         title = {Innovative approaches to addressing the tradeoff between interpretability and accuracy in ship fuel consumption prediction},
         journal = {Transportation Research Part C: Emerging Technologies},
         volume = {157},
         number = {},
         pages = {104361},
         year = {2023},
         note= {\url{https://doi.org/10.1016/j.trc.2023.104361}},
         author = {Haoqing Wang and Ran Yan and Shuaian Wang and Lu Zhen}
}

@book{Wichert2021,
      author = {Wichert, Andreas and Sa-Couto, Luis},
      title = {{Machine Learning -- A Journey to Deep Learning}},
      publisher = {World Scientific},
      year = {2021},
      note = {\url{https://doi.org/10.1142/12201}},
      address = {Singapore},
      edition   = {1st}
}

@article{Wu2022,
         title={Economic analysis of ship operation using a new antifouling strategy},
         author={Wu, Y and Hua, J and Wu, D L},
         journal={Ocean Engineering},
         volume={266},
         number = {},
         pages={113038},
         note = {\url{https://doi.org/10.1016/j.oceaneng.2022.113038}},
         year={2022}
}

@article{YAN2020,
         title = {Development of a two-stage ship fuel consumption prediction and reduction model for a dry bulk ship},
         journal = {Transportation Research Part E: Logistics and Transportation Review},
         volume = {138},
         number = {},
         pages = {101930},
         year = {2020},
         note = {\url{https://doi.org/10.1016/j.tre.2020.101930}},
         author = {Ran Yan and Shuaian Wang and Yuquan Du},
}

@article{ZHOU2022,
         title = {An adaptive hyper parameter tuning model for ship fuel consumption prediction under complex maritime environments},
         journal = {Journal of Ocean Engineering and Science},
         volume = {7},
         number = {3},
         pages = {255--263},
         year = {2022},
         note = {\url{https://doi.org/10.1016/j.joes.2021.08.007}},
         author = {Tianrui Zhou and Qinyou Hu and Zhihui Hu and Rong Zhen},
}

@article{Zis2020,
         title = {Ship weather routing: A taxonomy and survey},
         journal = {Ocean Engineering},
         volume = {213},
         number = {},
         pages = {107697},
         year = {2020},
         author = {Thalis P. V. Zis and Harilaos N. Psaraftis and Li Ding},
         note = {\url{https://doi.org/10.1016/j.oceaneng.2020.107697}}
        }

\newpage
\appendix

\renewcommand\thefigure{\thesection.\arabic{figure}}   
\setcounter{figure}{0}    

\renewcommand\thetable{\thesection.\arabic{table}}   
\setcounter{table}{0}   

\setcitestyle{authoryear,open={(},close={)}}

\section{Appendix}\label{sec:appendix}

\counterwithin{theo}{section}


\subsection{Theorem~\ref{theorem:np-hard}}


In this subsection of the Appendix, we prove that the most general formulation of the \gls{CSO} is NP-hard.
Theorem~\ref{theorem:np-hard} summarizes this result, which implies that no polynomial time algorithm can be expected to compute the global
minimum of \eqref{eq:cleaning}. 

\vspace{0.2cm}


\begin{theo}{theorem:np-hard}
    The cleaning schedule decision problem ``Does there exist a cleaning schedule $\zz$ that achieves a cost of $C$ or less for an instance of the problem~\eqref{eq:cleaning}?'' is NP-hard.
\end{theo}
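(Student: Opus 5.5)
We reduce from the \emph{Knapsack} (or Subset Sum / Partition) decision problem, which is NP-complete by \citet{Karp1972}. Given a knapsack instance with items $j=1,\dots,n$, values $v_j\in\mathbb{N}$, weights $w_j\in\mathbb{N}$, capacity $W$ and target value $V$, the question is whether some subset $S\subseteq\{1,\dots,n\}$ satisfies $\sum_{j\in S} w_j\le W$ and $\sum_{j\in S}v_j\ge V$. We will build, in polynomial time, an instance $(\cc,\mathbf{X},f)$ of \eqref{eq:cleaning} and a threshold $C$ such that a schedule $\zz$ with cost at most $C$ exists if and only if the knapsack instance is a yes-instance. The key freedom is that $f$ may depend on the entire history $\zz_{:j}$, since $\mathbb{H}$ contains all decision prefixes. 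Only the final voyage's fuel-cost function needs to ``see'' the whole vector $\zz$.

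Concretely, set $z_j=1$ to mean ``item $j$ is selected'' and put $c_j\coloneqq w_j$. The data matrices $\mathbf{X}_j$ are irrelevant, so take $m_j=1$ and $\mathbf{X}_j=0$. Put $f(\mathbf{X}_j,\zz_{:j})\coloneqq 0$ for $j<n$, and define the last voyage's cost by
\begin{equation*}
f(\mathbf{X}_n,\zz_{:n}) \coloneqq \begin{cases} 0 & \text{if } \sum_{j=1}^n v_j z_j \ge V,\\ M & \text{otherwise},\end{cases}
\end{equation*}
with $M\coloneqq W+1$ and threshold $C\coloneqq W$. The total cost of $\zz$ is then $\sum_j w_j z_j$ plus either $0$ or $M$. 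If a feasible knapsack subset exists, its indicator vector has cost $\sum_{j\in S}w_j\le W=C$. Conversely, any $\zz$ with cost $\le C<M$ must incur the zero branch, so $\sum v_jz_j\ge V$, and its cleaning cost is $\sum w_jz_j\le W$. This gives the equivalence.

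It remains to check that the reduction is polynomial. The vectors $\cc$ and $\mathbf{X}$ have size $O(n)$, and $f$ is given by an explicit description, a weighted sum and a comparison, that is evaluable in $O(n)$ time. The step I expect to need the most care is exactly this encoding issue. Because $f$ is an arbitrary function on an exponentially large domain, the problem must be understood as taking $f$ as a polynomial-time-computable oracle or circuit, not as a full table, or the size accounting breaks. I would state this convention explicitly, noting that the constructed $f$ is a simple threshold function and is even representable by a small decision tree of the kind $g$ might be. Under that convention, membership of the decision problem in NP is not required for NP-hardness, so the reduction completes the proof.
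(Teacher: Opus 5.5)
Your reduction is correct. It follows the same skeleton as the paper's proof: reduce 0-1 knapsack, identify $\mathbf{z}$ with the item selection, let the cleaning costs carry one linear functional, and let a history-dependent threshold penalty in $f$ enforce the other constraint. The difference is that the two knapsack constraints play swapped roles.

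The paper sets $\mathbf{c}\coloneqq-\mathbf{v}$ and $C\coloneqq-V$, so the value target becomes the cost threshold. It enforces the capacity through a penalty $C+n\cdot\max(|\mathbf{c}|)+1$ that $f$ charges whenever the selected weight exceeds $W$; the weights enter via $\mathbf{X}\coloneqq\mathbf{w}$, and $f$ reads the prefix $\mathbf{X}_{:i}$. You instead put the weights into $\mathbf{c}$, use the capacity as the threshold $C=W$, and enforce the value target with a single penalty $M=W+1$ on the last voyage.

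The paper's encoding keeps the knapsack objective itself as the cleaning-cost term, since $\sum_j c_jz_j=-\sum_j v_ju_j$. The price is negative cleaning costs. Your encoding gains several things:
\begin{itemize}
\item It proves hardness already on instances with nonnegative cleaning and fuel costs, matching the convention $c_j\in\mathbb{R}_+$ stated in Section~\ref{sec:CSO}.
\item Your $f$ fits the signature $f(\mathbf{X}_j,\mathbf{z}_{:j})$ of \eqref{eq:cleaning}. It is a well-defined single function because the length of $\mathbf{z}_{:j}$ identifies the voyage. The paper's $f$ reads earlier voyages' data, which is outside that signature.
\item Because your penalty is positive and charged once, the ``no'' direction needs only $w_j\ge 0$.
\end{itemize}

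The paper's penalty, by contrast, is negative whenever $V>n\cdot\max_i|v_i|+1$. It is also incurred by every voyage whose prefix weight exceeds $W$. Its ``no'' direction accounts only for the last voyage's penalty, so it can fail on such instances. For example, $n=2$, $\mathbf{v}=\mathbf{w}=(1,1)^\top$, $W=0$, $V=100$ is a no-instance, yet $\mathbf{z}=(1,1)^\top$ costs $2\cdot(-97)-2=-196\le C$. Penalizing only the final voyage, as you do, repairs this.

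Finally, your explicit convention that $f$ is supplied as a polynomial-time evaluable description is exactly what the paper's closing sentence tacitly assumes but never states.
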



\begin{prf}{prf:theorem_np_hard}
    To proof Theorem~\ref{theorem:np-hard}, we reduce the well-known NP-hard 0-1 knapsack decision problem [\citet{Mathews1896, Dantzig1957, Kellerer2004}] to our cleaning schedule decision problem. 
    In the 0-1 knapsack problem of size $n$, we enumerate a set of items $1,\dots,n$. 
    Each item $i$ has a value $v_i \in \mathbb{R}$ and a weight $w_i \in \mathbb{R}$. 
    There is a maximum weight $W \in \mathbb{R}$ and target value~$V \in \mathbb{R}$. 
    The decision variable $u_i \in \{0, 1\}$ is 1 if item $i$ is included in the knapsack and 0 otherwise. 
    The bold notation 
    $\ww \coloneqq \begin{bmatrix} w_1, \ldots, w_n \end{bmatrix}^\top$, 
    $\vv \coloneqq \begin{bmatrix} v_1, \ldots, v_n \end{bmatrix}^\top$, and
    $\uu \coloneqq \begin{bmatrix} u_1, \ldots, u_n \end{bmatrix}^\top$
    is used to stack those weights, values and 0-1 decisions, respectively, into vectors according to the order of the $n$ items.
    The decision problem then asks, ``Does there exist a selection of items $\uu$, whose values total to at least $V$ but whose weights are at most $W$ for an instance of the problem~\eqref{eq:knapsack}?''. 
    The proof that this problem is NP-hard was first given by \citet[p. 100]{Karp1972} through a sequence of reductions.
    \begin{align}
    \label{eq:knapsack}
    \begin{split}
    \underset{\uu}{\text{maximize}} \quad & \sum_{i=1}^{n} v_i \cdot u_i
    \\
    \text{subject to} \quad 
    & \sum_{i=1}^{n} w_i \cdot u_i \leq W,
    \\
    & \uu \in \{0,1\}^n.
    \end{split}
    \tag{KNAPSACK}
    \end{align}
    Given an instance of \eqref{eq:knapsack} $\left(\vv, \ww, V, W\right)\in \left(\mathbb{R}^n, \mathbb{R}^n, \mathbb{R}, \mathbb{R}\right)$ for $n$ items,  we transform it into an instance of the \eqref{eq:cleaning} decision problem $(\cc, \mathbf{X}, f, C)\in (\mathbb{R}^n, \mathbb{X}^n, \mathbb{X}\times\mathbb{R}\rightarrow\mathbb{R},\mathbb{R})$ for $n$ voyages.
    Notice that, since we are dealing with the decision problem, the additional parameter $C\in\mathbb{R}$ is the target cost.  This is a standard practice to convert integer optimization problems into 0-1-decision problems to prove NP-hardness~[\citet[Theorem~2]{Papadimitriou1994}].
    For the sake of this reduction, we design the ship profiles for a given voyage as a single real number ($\mathbb{X}=\mathbb{R}$) rather than a data matrix so that each $\xx_i\in\mathbb{R}$ for $i=1,\dots,n$. 
    The transformation is then as follows:
    \begin{equation*}
    C\coloneqq-V,
    \;
    \cc\coloneqq-\vv,
    \;
    \xx\coloneqq\ww,
    \;
    f(\xx_{:i},\zz_{:i})\coloneqq
    \begin{cases}
    C+n\cdot\max(|\cc|)+1 & \text{ if } \sum \mathbf{X}_i \cdot z_i > W,\\
    0       & \text{ otherwise}.
    \end{cases}
    \end{equation*}

    Suppose there exists a selection of items $\uu^*\in\{0,1\}^n$ that satisfy the 0-1 knapsack decision problem such that the weight constraint is met $\sum w_i \cdot u_i^* \leq W$ and the target value is achieved $\sum v_i \cdot u_i^* \geq V$. 
    Consider the cleaning schedule $\zz^*=\uu^*$. 
    In the transformed instance, we have $f(\xx_{:i}, \zz_{:i}^*)=0$ for each $i$. 
    Therefore, the cost of this cleaning schedule is $\sum_{i=1}^{n}f(\xx_{:i}, \zz_{:i}^*) + \sum_{i=1}^{n} c_i \cdot z_i^* = 0 + \sum_{i=1}^{n} -v_i \cdot u_i^*\leq-V= C$ satisfying the cleaning scheduling decision problem.
    
    Suppose instead that there does not exist a selection of items that satisfy the 0-1 knapsack decision problem. 
    This means for all selections of items $\uu'\in\{0,1\}^n$, either the weight constraint is violated $\sum w_i \cdot u'_i > W$ or the target value is not achieved $\sum v_i \cdot u'_i < V$. 
    Consider the corresponding cleaning schedules $\zz'=\uu'$ in the transformed problem. 
    In the first case where $\sum w_i \cdot u'_i > W$, the function $f(\xx_{:n}, \zz_{:n}')$ evaluates to $C+n\cdot\max(|\cc|)+1$ ensuring that the cost cannot be less than $C$. 
    In the second case where $\sum v_i \cdot u'_i < V$, we find that the sum of cleaning costs is $\sum c_i \cdot z'_i > C$, so the overall cost cannot be less than C.
    Thus, the cleaning schedule problem is unsatisfied for $\zz'\in\{0,1\}^n$. 
    
    To finish the proof, we conclude that a solution to the 0-1 knapsack decision problem exists if and only if a solution to the transformed problem exists. 
    Therefore, should there be some polynomial time algorithm to solve instances of \eqref{eq:cleaning}, we could transform any instance of \eqref{eq:knapsack} of size $n$ into an instance of \eqref{eq:cleaning} of size $n$ and by solving it as a proxy to achieve polynomial time algorithm for 0-1 knapsack. 

\end{prf}


\subsection{Theorem~\ref{theo:algo_1}}


In this subsection of the Appendix, we provide important properties for Algorithm~\ref{alg:brute_force}.
In particular, Theorem~\ref{theo:algo_1} derives the number of function calls to $f$ that are required to find the global solution of \eqref{eq:cleaning}.
Overall, these results further show motivate the introduction of \eqref{eq:cleaning-2} as a more tractable problem.



\begin{theo}{theo:algo_1}
    Given an arbitrary instance  $\left(\cc, \mathbf{X}, f\right)\in \left(\mathbb{R}^n, \mathbb{X}^n, \mathbb{X}^n\times\mathbb{H} \rightarrow \mathbb{R}\right)$ of \eqref{eq:cleaning}:
    \begin{enumerate}[noitemsep,topsep=0pt]
        \item The brute force algorithm finds a global minimum with exactly $n\cdot2^{n}$ calls to  $f$.
        \item If no assumptions are made on the structure of $f$, then no other deterministic algorithm can guarantee to find an optimal cleaning schedule with fewer than $n\cdot2^{n}$ calls to  $f$.
    \end{enumerate}
\end{theo}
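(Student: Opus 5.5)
For Part 1, I would simply count the calls made by Algorithm~\ref{alg:brute_force}. The for-loop in Line~\ref{line:brute_for_loop} iterates over all $2^n$ vectors $\zz\in\{0,1\}^n$. For each one, Line~\ref{line:brute_obj_evaluation} evaluates the objective of \eqref{eq:cleaning}, which needs exactly one call $f(\mathbf{X}_j,\zz_{:j})$ for each $j=1,\dots,n$ and no other calls. This gives $n\cdot 2^n$ calls in total. Correctness is immediate: the algorithm computes the exact objective value of every feasible point and keeps the smallest, so the returned $\zz^*$ is a global minimum of the finite problem.

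For Part 2, I would use an adversary argument in the black-box query model, where the deterministic algorithm learns about $f$ only through its calls. The first step is to identify the distinct inputs on which $f$ can be queried. These are the pairs $(\mathbf{X}_j,\zz_{:j})$ with $j\in\{1,\dots,n\}$ and $\zz_{:j}\in\{0,1\}^j$, giving $\sum_{j=1}^n 2^j = 2^{n+1}-2$ relevant pairs. A pair with $j<n$ is shared by many schedules, so this count is smaller than $n2^n$.

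This is the main obstacle. A naive adversary that perturbs a single unqueried value of $f$ only shows that every relevant pair must be queried, which is a lower bound of $2^{n+1}-2$, not $n2^n$. To reach exactly $n2^n$, I would make the instance break the sharing. The plan is to choose $\mathbf{X}$ so that the inputs differ by schedule, for instance by letting the adversary's $f$ depend on the whole instance. The paper's typing of $f$ on $\mathbb{X}^n\times\mathbb{H}$ in this theorem suggests $f$ receives all of $\mathbf{X}$. Alternatively, I would count calls with multiplicity, treating the call for voyage $j$ under schedule $\zz$ as a separate oracle access $f_j(\zz)$.

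Under that interpretation, the oracle is a table of $n2^n$ independent values $f_j(\zz)$. The adversary answers every query with $0$. Suppose the algorithm stops after fewer than $n2^n$ queries and outputs some $\hat\zz$. Then some entry $(j',\zz')$ was never queried. If $\zz'\neq\hat\zz$, the adversary sets $f_{j'}(\zz')$ to a large negative value, making $\zz'$ strictly better than $\hat\zz$. If $\zz'=\hat\zz$, it sets $f_{j'}(\hat\zz)$ to a large positive value, so that $\hat\zz$ is no longer optimal. In both cases the new $f$ agrees with every answer the algorithm received, so the algorithm's deterministic run, and hence its output, is unchanged, yet that output is wrong. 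I expect the write-up to spend its care on stating the query model precisely, since the sharpness of the constant $n2^n$ depends on it.
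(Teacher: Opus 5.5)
Your argument is essentially the paper's: Part 1 is the same count, and Part 2 is the same adversary step. You find an unqueried voyage--schedule pair by pigeonhole, then lower its value if it belongs to a schedule other than the output and raise it if it belongs to the output. You also make explicit the per-$(j,\mathbf{z})$ oracle model that the paper's pigeonhole over $n\cdot 2^n$ pairs silently relies on, since the paper's perturbation of $f(\mathbf{X}_{:j},\mathbf{z}'_{:j})$ actually shifts every schedule with prefix $\mathbf{z}'_{:j}$, possibly including the returned $\mathbf{z}^*$.

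In the write-up, discard the idea of choosing $\mathbf{X}$ to break the sharing, because the voyage-$j$ input depends on $\mathbf{z}$ only through $\mathbf{z}_{:j}$, whatever $\mathbf{X}$ and $f$ are. Also state that the multiplicity model is not optional. In the literal model, querying each of the $2^{n+1}-2$ distinct inputs once and summing already finds the optimum, so the $n\cdot 2^n$ bound of Part 2 fails there for every $n\geq 2$.
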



\begin{prf}{prf:algo_1}

    \begin{enumerate}    
    \item 
    On all instances, Algorithm~\ref{alg:brute_force} loops $\left| \{0,1\}^n \right|=2^n$ times. 
    For each loop, Line~\ref{line:brute_obj_evaluation} evaluates the objective value, which requires $n$ calls to $f$. 
    Thus, overall, it makes exactly $n\cdot 2^n$ calls to $f$.

    \item  
    Suppose, towards a contradiction, that some deterministic algorithm $\mathcal{A}$ can find an optimal solution in fewer than $n\cdot2^{n}$ calls to $f$. 
    Let $I_1\coloneqq\left(\cc,\mathbf{X},f_{I_1}\right)$ be an arbitrary problem instance.
    Let $\zz^*$ be the solution returned by $\mathcal{A}$ when run on instance $I_1$, which achieves objective value $\phi^*$. 
    Note there may be multiple optimal solutions but $\mathcal{A}$ always returns the first one in lexicographical search order.
    By the pigeonhole principle there exists some voyage $j\in\{1,\dots,n\}$ and some cleaning schedule $\zz'\in\{0,1\}^n$ such that algorithm  $\mathcal{A}$ does not call $f(\xx_{:j},\zz'_{:j})$. 
    Consider the case where $\zz^*\neq\zz'$. 
    Construct a new instance $I_2\coloneqq(\cc,\mathbf{X}, f_{I_2})$ such that $f_{I_2}$ returns the same value as $f_{I_1}$ on all inputs apart from $\xx_{:j},\zz'_{:j}$ where
    \[f_{I_2}(\xx_{:j},\zz'_{:j})\coloneqq\sum_{i\neq j}f_{I_1}(\xx_{:i}, \zz'_{:i}) - \sum_{i\neq j} c_i \cdot z_i+\phi^*-1.\] 
    In this new instance, the optimal objective value achieved by $\zz'$ is $\phi^*-1$, which is less than $\zz^*$. 
    Since algorithm $\mathcal{A}$ is deterministic and does not call $f$ on instance $(\xx_{:j},\zz'_{:j})$, it cannot distinguish between the two instances $I_1$ and $I_2$ so it must produce the same output of $\zz^*$ on both. 
    But this means it fails to find the optimal schedule, for instance,~$I_2$. 
    Now consider the case where $\zz^*=\zz'$. We can construct a new instance $I_3\coloneqq(\cc,\mathbf{X}, f_{I_3})$ where $f_{I_3}:(\xx_{:j},\zz'_{:j})\mapsto\infty$. 
    Using the same argument, we conclude that $\mathcal{A}$ fails to find the optimal schedule, for instance $I_3$. 
    These two cases show the contradiction.
    \end{enumerate}
\end{prf}


\subsection{Proof of Theorem~\ref{theo:sub_problem}}\label{sec:proof_substructure}


In this part of the Appendix, we provide the proof of Theorem~\ref{theo:sub_problem}, which justifies the application of dynamic programming to find a solution of \eqref{eq:cleaning-2}.


\vspace{0.2cm}

\begin{prf}[Theorem~\ref{theo:sub_problem}]{prf:sub_problem}
    \emph{Case $z_i=1$}. Assume that it is optimal to clean before voyage $i$, which implies that the hull fouling metric for voyage $i$ is set to zero $b_i=0$. 
    By definition, $\begin{bmatrix} \hat{z}_{i+1},\dots,\hat{z}_{n} \end{bmatrix}^\top \coloneqq\Psi[i+1, i]$ is the optimal cleaning schedule for the subsequent voyages if we clean at time point $i$. 
    Consider the cleaning schedule $\zz^{(\text{clean})}=\begin{bmatrix}1, \hat{z}_{i+1},\dots,\hat{z}_{n} \end{bmatrix}^\top$, which achieves the following objective value
    \begin{align}
    \label{eq:lambda_clean}
        \begin{split}
            \Phi[i,j] & =\sum_{k=i}^{n}\Tilde{f}(\mathbf{X}_k, b_k) + \sum_{k=i}^{n} c_k \cdot z_k \\
                &= \Tilde{f}(\mathbf{X}_i,b_i) + c_i + \sum_{k=i+1}^{n} \Tilde{f}(\mathbf{X}_k, b_k) + \sum_{k=i+1}^{n} c_k \cdot z_k \\
                &= \Tilde{f}(\mathbf{X}_i,0) + c_i + \Phi[i+1,i].
        \end{split}
    \end{align}

    \noindent \emph{Case $z_i=0$}. 
    Assume that it is optimal to not clean before voyage $i$.
    This would imply that the hull fouling metric before voyage $i$ takes the value $b_i=\sum_{k=j}^{i-1} B_k$, which is the accumulation of the fouling over the voyages $\{j,j+1,...,i-1\}$. 
    By definition, $\begin{bmatrix} \hat{z}_{i+1},\dots,\hat{z}_{n}\end{bmatrix}^\top \coloneqq\Psi[i+1, j]$ is the optimal cleaning schedule for the subsequent voyages if we do not clean at time point~$i$. 
    Consider the cleaning schedule $\zz^{(\text{not})}=\begin{bmatrix} 0, \hat{z}_{i+1},\dots,\hat{z}_{n} \end{bmatrix}^\top$, which achieves the following objective value
    \begin{align}
    \label{eq:lambda_not}
    \begin{split}
    \Phi[i,j]   &=\sum_{k=i}^{n}\Tilde{f}(\mathbf{X}_k, b_k) + \sum_{k=i}^{n} c_k \cdot z_k  \\
              &= \Tilde{f}(\mathbf{X}_i,b_i) + c_i \cdot 0 + \sum_{k=i+1}^{n}f(\mathbf{X}_k, b_k) + \sum_{k=i+1}^{n} c_k \cdot z_k \\
              &= \Tilde{f}\left(\mathbf{X}_i, \sum_{k=j}^{i-1} B_k\right) + \Phi[i+1,j].
    \end{split}
    \end{align}
\end{prf}


\subsection{Linear Regression}
\label{subsec:Linear_Regression}

A widely used statistical method for modeling the relationship between a dependent variable~$\mathbf{y}$ and a set of independent variables is the \gls{LinReg} [\citet[p. 24]{Fahrmeir2021}]. 
The main idea of the approach is to express the response variable as a non-deterministic linear function of the covariates. 
For an empirical dataset, this can be translates into equations of the following form
\begin{equation*}
    \mathbf{y} = \mathbf{X} \boldsymbol{\beta} + \boldsymbol{\varepsilon},
\end{equation*}
where $ \boldsymbol{\beta} \in \mathbb{R}^k$ denotes the true unknown regression coefficients, and $\boldsymbol{\varepsilon} \in \mathbb{R}^m$ is the unknown random error term. 
To estimate the regression coefficients, a popular approach is to minimize the sum of squared residuals that is given as
\begin{equation*}
    \underset{\boldsymbol{\beta}}{\text{minimize}} \ \sum_{t=1}^m \bigl( y_t - \mathbf{x}_t \boldsymbol{\beta} \bigr)^2,
\end{equation*}
and is also known as Ordinary Least Squares [\citet[p. 26]{Fahrmeir2021}]. 
The main advantages of the \gls{LinReg} are its simplicity, and interpretability [\citet[p. 43]{hastie2009}].


\subsection{Least Absolute Shrinkage and Selection Operator}

The \gls{LASSO} regression introduced by \citet{Tibshirani1996} is another common statistical method to linearly model the connection between a set of covariates and a continuous outcome. 
To estimate the regression coefficients, \gls{LASSO} minimizes the penalized least squares criterion, given as
\begin{equation*}
    \underset{\boldsymbol{\beta}}{\text{minimize}} \ \frac{1}{2} \cdot \sum_{t=1}^m \bigl(y_t - \mathbf{x}_t \boldsymbol{\beta} \bigr)^2 + \lambda \cdot \sum_{l=1}^k \lvert \beta_l \rvert,
\end{equation*}
where $\lambda \cdot \sum_{l=1}^k \lvert \beta_l \rvert$ with $\lambda > 0$ represents the penalty term. 
This approach is similar to \gls{LinReg} but additionally incorporates regularization to prevent overfitting and automatic feature selection [\citet[p. 170]{Fahrmeir2021}]. 
Note that the user has to specify the parameter $\lambda$ that decides over the strength of the penalization. 
In our numerical experiments, we perform a random search of $\lambda$ values between 0.1 and 5.


\subsection{Multilayer Perceptron}

A \gls{ML} algorithm that allows for more complex relationships between input and output is the \gls{MLP}. 
The main idea of this method is to use a series of non-linear transformations instead of just calculating weighted sums of the independent variables [\citet[p. 427]{Wichert2021}]. 
The general structure of \gls{MLP} then can be summarized based on three components: input layer, hidden layer(s), and output layer [\citet[p. 11]{ravichandiran2019}]. 
While the former component is only used to enter the variables, the hidden layers perform the non-linear transformations of the features based on activation functions [\citet[pp. 12--13]{ravichandiran2019}]. 
For example, the procedure at the first hidden layer can be described as
\begin{equation*}
    h_1 \bigl(\mathbf{X} \mathbf{W}_1 + \mathbf{B}_1 \bigr) = \mathbf{Z}_1,
\end{equation*}
where $h_1: \mathbb{R}^{m \times k} \rightarrow \mathbb{R}^{m \times u_1}$ denotes the activation function with $u_1$ being the number of neurons in the layer, $\mathbf{W}_1 \in \mathbb{R}^{k \times u_1}$ represents the weight matrix, and $\mathbf{B}_1 \in \mathbb{R}^{m \times u_1}$ is the so-called bias term. 
Furthermore, $\mathbf{Z}_1 \in \mathbb{R}^{m \times u_1}$ denotes the output of the first layer which replaces $\mathbf{X}$ as input in the second layer. 
The procedure then continues until the output layer is reached, which has to ensure that the outcome of the model has the correct dimensions. 
Due to this set-up, the \gls{MLP} is generally more powerful than the two linear models, but it comes at the price that the user must specify several hyperparameters. 
Unfortunately, the optimal choices for these parameters are often unknown as they highly depend on the structure of the underlying problem. 
For example, there exist several options for the activation function, the number of hidden layers, and the amount of neurons per layer that significantly influence the performance of \gls{MLP}. 
In practice, the hyperparameters are often selected by evaluating different combinations.
In our numerical experiments, we try each of the following hyperparameter combinations:
\begin{verbatim}
 "hidden_layer_sizes": [(50, 50, 50), (50, 100, 50), (25, 50, 100, 100, 50, 25)],
 "activation": ["relu", "tanh"],
\end{verbatim}

\subsection{Support Vector Regression}
\label{subsec:Support_Vector_Regression}

Another alternative approach is \gls{SVR} [\citet{Smola2004}], which aims to find a function that approximates the data within a specified tolerance, rather than minimizing squared residuals. 
Formally, \gls{SVR} solves the following optimization problem:
\begin{align*}
    \label{eq:SVR}
    \underset{\boldsymbol{\beta}, b, \boldsymbol{\xi}, \boldsymbol{\xi}^*}{\text{minimize}}\quad
    & \frac{1}{2} \|\boldsymbol{\beta}\|^2 + C \cdot  \sum_{t=1}^m (\xi_t + \xi_t^*),\\
    \text{subject to}\quad
    &y_t - (\mathbf{x}_t \boldsymbol{\beta} + b) \leq \varepsilon + \xi_t, \\
    &(\mathbf{x}_t \boldsymbol{\beta} + b) - y_t \leq \varepsilon + \xi_t^*, \\
    &\xi_t, \xi_t^* \geq 0,
\end{align*}
where $\xi_t, \xi_t^* \in \mathbb{R}$ are slack variables allowing violations of the $\varepsilon$-insensitive margin, and $C > 0$ is a regularization parameter controlling the trade-off between model complexity and tolerance to deviations. 
The main advantages of \gls{SVR} comes from its robustness to outliers, and its ability to handle non-linear relationships through its dual formulation and kernel trick [\citet[p. 193]{Smola2004}].
However, this approach also introduces several hyperparameters.
In our numerical experiments, we use the \gls{SVR} implementation of the Python library \verb|scikit-learn| [\citet{Pedregosa2011}] with the following hyperparameter search space:

\begin{verbatim}
    "C": (0.01, 0.1, 1, 10, 100, 1000),
    "gamma": (10, 1, 0.1, 0.01, 0.001, 0.0001),
    "kernel": ("rbf",)
\end{verbatim}


\subsection{k-Nearest Neighbors}

All previously explained algorithms require the estimation of parameters that constrain the model to a predefined functional form. 
One approach that circumvents this inflexibility is the \gls{kNN}. 
It is one of the simplest non-parametric methods and essentially only requires the storage of the whole dataset [\citet[p. 105]{James2021}]. 
The outcome of any observation~$\mathbf{x}_0$ is then predicted by a comparison to all other instances, where similarity is measured based on a distance measure. 
A common choice, in this regard, is the Minkowski metric (p-norm), calculated as
\begin{equation*}
    \bigl\lVert \mathbf{x}_0 - \mathbf{x}_t \bigr\rVert^p = \biggl ( \sum_{l=1}^q \lvert x_{0l} - x_{tl} \rvert ^p\biggl)^{\frac{1}{p}},
\end{equation*}
where $p \in \mathbb{N}$ is the choice of norm, and $q \in \mathbb{N}$ represents the number of features (not to be confused with $k \in \mathbb{N}$, which, in this case is the number of nearest neighbors). 
Based on such a metric a similarity ranking can then be constructed, which is used to create a neighborhood $\mathcal{N}_k(\mathbf{x}_{0})$ of the $k$ most similar observations. 
Afterwards, the outcome for the instance $\mathbf{x}_0$ is calculated as an average of the response variables from this neighborhood [\citet[p. 14]{hastie2009}]. 
Due to this rather simple set-up, \gls{kNN} only requires the selection of two hyper-parameters: $k$ and $p$.
We tried a range values of $k$ between 1 and 101, and $p$ as either the manhattan or euclidean norm.


\subsection{Random Forest}

The \gls{RF} is a non-parametric approach that is based on regression trees. 
It is an ensemble learning technique that combines multiple \glspl{DT} to create an overall more accurate and robust model. 
The main idea of a \gls{DT} is to ask a series of yes-no questions based on the input variables that divide the observations into partitions [\citet[p. 664]{Bishop2006}]. 
The outcome of an instance is then predicted by the average outcome in the respective group. 
\gls{RF} extends this methodology by simultaneously growing multiple de-correlated trees based on a set of bootstrap samples [\citet[pp. 587--588]{hastie2009}]. 
The overall prediction is then aggregated by averaging over the individual tree outputs. 
Another peculiarity is the introduction of more randomness by selecting random subsets of variables per yes-no-question, which leads to a greater tree diversity [\citet[pp. 588--589]{hastie2009}]. 
Note that due to this more sophisticated set-up, the user again has to provide several hyperparameters such as the number of de-correlated trees, the percentage of randomly selected variables that are considered per split, or the allowed depth per \gls{DT}.
In our analysis, we perform a random search of the following space:

\begin{verbatim}
    "n_estimators": [200],
    "max_features": np.arange(0.2, 1.05, 0.05),
    "max_depth": randint(4, 30),
    "min_samples_leaf": randint(5, 50),
\end{verbatim} 


\subsection{Extremely Randomized Trees}

The \gls{ET} are another tree-based method that can be seen as an extension of \gls{RF}. 
It introduces even more randomness by determining the binary decision per yes-no question based on random thresholds for each variable [\citet[p. 169]{geron2022}]. 
Due to this adaption, the trees become even more diverse, and the algorithm also gets much faster than \gls{RF}, which aims to find optimal thresholds. 
Nevertheless, most of the hyperparameters of \gls{RF} remain and accordingly must be specified before the model estimation.
In our numerical experiments, we search the same space of hyperparameters as given for \gls{RF} above.


\subsection{Extreme Gradient Boosting}

The last non-parametric and tree-based approach that is used in this paper is \gls{XGB}. 
While \gls{RF} and \gls{ET} grow multiple independent \glspl{DT}, the main idea of \gls{XGB} is to fit a series of trees that aim to correct the predictions of their predecessors [\citet[pp. 110--112]{wade2020}]. 
Formally, for an arbitrary instance $t$ this procedure can be expressed as
\begin{equation*}
    \hat{y}_t = \sum_{T=1}^{\bar{T}} f_T(\mathbf{x}_t) \quad \text{with} \quad f_T \in \mathcal{F},
\end{equation*}
where $\bar{T} \in \mathbb{N}$ denotes the number of additive functions, and $\mathcal{F} = \bigl\{f_T(\mathbf{x}_t) = w_{q(\mathbf{x}_t)} \bigr\} \bigl(q: \mathbb{R}^k \rightarrow d \in \{1, \ldots, D \}, w_d \in \mathbb{R} \bigr)$ represents the space of regression trees with $D \in \mathbb{N}$. 
Within this space, $q$ denotes the structure of each tree and allocates an instance to a leaf index $d$ with a corresponding leaf score $w_d$. 
The leaf scores $w_d$ of the $T^{th}$ tree are then estimated based on minimizing a regularized objective function that also contains a penalty term for the complexity of the additional tree [\citet[p. 786--787]{chen2016}]. 
Note that for this \gls{ML} algorithm also several hyperparameters must be specified by the user [\citet[ch. 6]{wade2020}]. 
For example, the number of additive trees $\bar{T}$, the allowed depth of an individual tree or the strength of the regularization terms must be selected.
In our numerical experiments, we use the \gls{XGB} implementation of the Python library \verb|XGBoost| [\citet{chen2016}] with the following hyperparameter search space:
\begin{verbatim}
    "colsample_bytree": np.arange(0.2, 1.05, 0.05),
    "gamma": uniform(0, 2.5),
    "learning_rate": uniform(0.01, 0.3),
    "reg_alpha": uniform(40, 180),
    "reg_lambda": uniform(0, 1),
    "max_depth": randint(2, 18),
    "n_estimators": randint(150, 700),
    "subsample": np.arange(0.2, 1.05, 0.05)
\end{verbatim} 


\subsection{K-Fold Cross Validation}

Figure~\ref{fig:k-fold-cross-val} illustrates the main idea of K-fold cross validation with K = 6.
In this framework, one of the six folds is set aside as test dataset for the final model evaluation. 
The remaining folds are used for variable selection and hyperparameter tuning.
In particular, the models are fitted five times for each configuration by cycling through the folds one to five as the validation set while using the remaining four folds for training.
The overall validation score is then computed as the average of the individual scores and serves to assess the quality of a given feature set or hyperparameter choice [\citet[pp. 32--33]{Bishop2006}; \citet[pp. 241--242]{hastie2009}].

\begin{figure}[H]
    \centering
    \includegraphics[width=0.5\linewidth]{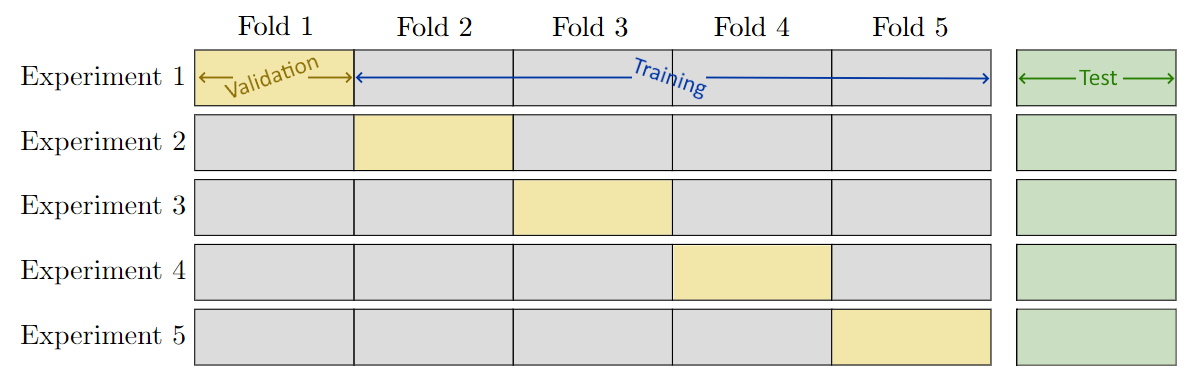}
    \caption{Visualization of K-fold cross validation with a final test set.}
    \label{fig:k-fold-cross-val}
\end{figure}

\subsection{Further Regression Scores}

To evaluate the performance of the \gls{ML} algorithms, we use three performance metrics: $\text{R}^2$, \gls{RMSE} and \gls{MAE}. 
While the results of the $\text{R}^2$ score are directly incorporated in Subsection~\ref{sec:selection_of_fuel_prediction_function}, the \gls{RMSE} and \gls{MAE} values are reported in this subsection of the Appendix for the sake of brevity.
Based on the results shown in Table~\ref{tab:summary_RMSE}, it can be observed that the model rankings based on the \gls{RMSE} and the  $\text{R}^2$ score are identical.
Overall, \gls{XGB} outperforms all other \gls{ML} models.

\begin{table}[H]
    \scriptsize
    \centering
    \captionsetup{justification=centering}
    \caption{\gls{RMSE} scores of the \gls{ML} models on the test datasets.}
\begin{tabular}{l|ccccccccc}
\toprule
& \acrshort{LinReg}
& \acrshort{LASSO}
& \acrshort{kNN}
& \acrshort{MLP}
& \acrshort{SVR}
& \acrshort{ET}
& \acrshort{RF}
& \acrshort{XGB}\\
\midrule
$V_1$   &  26.8676 &  26.7702 & 29.9897 & 17.3455 & 10.8206 & 9.4063 & 8.7529 & \textbf{6.2752}\\
$V_2$   &  20.5689 &  20.6605 & 30.6290 & 18.1058 & 11.0696 & 9.0086 & 9.2723 & \textbf{7.7433}\\
$V_3$   &  20.2513 &  20.2396 & 30.8415 & 12.5006 & 6.5161  & 5.8514 & 5.9280 & \textbf{4.0179}\\
$V_4$   &  11.4218 &  11.5367 & 20.9961 & 11.4799 & 12.2190 & 8.5980 & 8.9163 & \textbf{6.8672}\\
$V_5$   &  16.0106 &  16.0452 & 27.4599 & 15.2965 & 10.6673 & 7.8016 & 6.9768 & \textbf{6.6745}\\
$V_6$   &  47.5881 &  47.4514 & 49.4213 & 27.7973 & 14.0532 & 9.7829 & 9.3958 & \textbf{8.5962}\\
$V_7$   &  21.7915 &  21.8496 & 28.2506 & 14.1761 & 8.4934  & 8.6389 & 9.7555 & \textbf{8.2419}\\
$V_9$   &  20.6359 &  20.8467 & 26.0503 & 13.7851 & 8.8230  & 6.2017 & 6.0882 & \textbf{4.1277}\\
$V_{10}$&  16.9664 &  16.9650 & 26.4931 & 14.5940 & 11.5048 & 5.8268 & 5.6535 & \textbf{5.4880}\\
\bottomrule
\end{tabular}

    \label{tab:summary_RMSE}
\end{table}

Table~\ref{tab:summary_MAE} reports the \gls{MAE} scores for each vessel and model. 
In contrast to the \gls{RMSE} results, \gls{XGB} does not consistently outperform the other approaches, as the other two tree-based methods provide slightly better outcomes for a few vessels. 
In particular, \gls{RF} is the best model in three out of nine cases, and \gls{ET} outperforms all other \gls{ML} algorithms in two out of nine cases. 
These ranking differences can be explained by the importance of larger prediction errors. 
Specifically, the \gls{RMSE} and the $\text{R}^2$ score penalize high errors more strongly by squaring all errors, while the \gls{MAE} only considers the absolute difference.

\begin{table}[H]
    \scriptsize
    \centering
    \captionsetup{justification=centering}
    \caption{\gls{MAE} scores of the \gls{ML} models on the test datasets.}
\begin{tabular}{l|ccccccccc}
\toprule
& \acrshort{LinReg}
& \acrshort{LASSO}
& \acrshort{kNN}
& \acrshort{MLP}
& \acrshort{SVR}
& \acrshort{ET}
& \acrshort{RF}
& \acrshort{XGB}\\
\midrule
$V_1$   & 20.4051 & 19.3193 & 19.8012 &  13.0955 & 8.8206 & 8.4063           & 7.7529          & \textbf{6.1702}\\
$V_2$   & 9.3461  & 9.6350  & 19.4036 &  13.4157 & 7.1978  & 7.1606          & \textbf{6.9578} & 7.5517\\
$V_3$   & 9.3176  & 9.2890  & 17.7586 &  10.1653 & 6.3763  & 4.5847          & 4.4440          & \textbf{3.9315}\\
$V_4$   & 7.0739  & 7.1368  & 10.5170 &  11.7771 & 5.5238  & \textbf{4.4080} & 4.7042          & 5.5446\\
$V_5$   & 8.1376  & 8.1762  & 17.3489 &  14.3571 & 7.3773  & 6.9749          & \textbf{6.5662} & 6.8213\\
$V_6$   & 26.8383 & 27.4823 & 19.0256 &  14.9325 & 12.4432 & 7.3608          & \textbf{6.5948} & 7.8703\\
$V_7$   & 11.7862 & 11.8897 & 14.6954 &  9.6867  & 8.1088  & \textbf{5.4271} & 6.2481          & 5.8392\\
$V_9$   & 11.1912 & 11.8410 & 15.5353 &  10.4029 & 5.9223  & 4.4587          & 4.4701          & \textbf{2.9728}\\
$V_{10}$& 10.7153 & 10.5808 & 18.2649 &  9.0718  & 9.1003  & 5.9203          & 5.8910          & \textbf{5.0052}\\
\bottomrule
\end{tabular}

    \label{tab:summary_MAE}
\end{table}


\subsection{Bootstrap Confidence Intervals of Performance Metrics}
\label{sec:bootstrapping}

For the sake of brevity, Table~\ref{tab:XGB_detailed} in Subsection~\ref{sec:selection_of_fuel_prediction_function} only reported point estimates for the three performance metrics: \gls{RMSE}, \gls{MAE} and $\text{R}^2$ score.
To study the uncertainty behind these results, this section of the Appendix provides corresponding \glspl{CI}.
For this purpose, we bootstrap 100,000 independent samples with replacement based on the training and testing split of each vessel, and repeatedly score the model's performance [\citet[pp. 655--656]{fox2015}].
In total, this leads to 36 \glspl{CI} that are simultaneously constructed for each evaluation criterion. 
To ensure that the familywise error rate stays at $\alpha = 0.05$, the confidence level is adjusted based on the Bonferroni correction [\citet{dunn1961}] leading to $\alpha = \frac{0.05}{36} \approx0.0014$.  
The low ($\alpha$), medium and high ($1-\alpha$) percentiles are presented in Tables~\ref{tab:XGB_bootstrapping_RMSE}, \ref{tab:XGB_bootstrapping_MAE} and~\ref{tab:XGB_bootstrapping_R2}.

\begin{table}[H]
    \scriptsize
    \centering
    \caption{\gls{RMSE} bootstrap \glspl{CI} results for the \gls{XGB} model whose training is described in Section~\ref{sec:numerical_experiments}.}
    \begin{tabular}{l | ccc | ccc }
\toprule
& \multicolumn{3}{c}{Training \gls{RMSE}} &  \multicolumn{3}{c}{Test \gls{RMSE}} \\
& Low $0.14\%$ & Med $50.00\%$ & High $99.86\%$ & Low $0.14\%$ & Med $50.00\%$ & High $99.86\%$ \\
\midrule
$V_1$    & 1.9131   & 2.0067   & 2.1675   & 5.8628   & 6.2659   & 6.6767 \\
$V_2$    & 3.5962   & 3.8042   & 4.0527   & 7.0472   & 7.5049   & 7.9726 \\
$V_3$    & 0.5668   & 0.6189   & 0.6792   & 3.5733   & 4.0010   & 4.4711 \\
$V_4$    & 2.5193   & 2.7142   & 2.9633   & 6.4384   & 6.9332   & 7.6156 \\
$V_5$    & 1.0277   & 1.1175   & 1.2120   & 5.9388   & 6.5944   & 7.7847 \\
$V_6$    & 1.6050   & 1.6928   & 1.7873   & 7.7066   & 8.6141   & 9.4907 \\
$V_7$    & 3.4994   & 3.6774   & 3.8838   & 7.1598   & 8.1078   & 9.1830 \\
$V_9$    & 2.3398   & 2.5132   & 2.7144   & 3.7714   & 4.0364   & 4.3271 \\
$V_{10}$ & 1.6891   & 1.7962   & 1.9136   & 5.3827   & 5.6037   & 5.8318 \\
\midrule
\bottomrule
\end{tabular}
    \label{tab:XGB_bootstrapping_RMSE}
\end{table}

\begin{table}[H]
    \scriptsize
    \centering
    \caption{\gls{MAE} bootstrap \glspl{CI} given for the \gls{XGB} model whose training is described in Section~\ref{sec:numerical_experiments}.}
    \begin{tabular}{l | ccc | ccc }
\toprule
& \multicolumn{3}{c}{Training \gls{MAE}} &  \multicolumn{3}{c}{Test \gls{MAE}} \\
& Low $0.14\%$ & Med $50.00\%$ & High $99.86\%$ & Low $0.14\%$ & Med $50.00\%$ & High $99.86\%$ \\
\midrule
$V_1$    & 1.1843   & 1.2530   & 1.3209   & 5.5687   & 6.2680   & 6.8778  \\
$V_2$    & 4.5828   & 4.8522   & 5.0774   & 6.6710   & 7.3887   & 8.2529  \\
$V_3$    & 0.5827   & 0.6131   & 0.6429   & 3.3106   & 3.9388   & 4.5517  \\
$V_4$    & 1.5849   & 1.6863   & 1.7934   & 4.9216   & 5.5938   & 6.0942  \\
$V_5$    & 1.0781   & 1.1233   & 1.1773   & 6.0968   & 6.7239   & 7.5140  \\
$V_6$    & 1.7144   & 1.8600   & 2.0268   & 6.7174   & 7.9455   & 8.9072  \\
$V_7$    & 1.9282   & 2.0127   & 2.1030   & 5.0456   & 5.6326   & 6.1831  \\
$V_9$    & 1.3578   & 1.4203   & 1.4810   & 2.4520   & 2.8794   & 3.2993  \\
$V_{10}$ & 1.4199   & 1.5016   & 1.5938   & 4.9048   & 5.2469   & 5.5879  \\
\midrule
\bottomrule
\end{tabular}
    \label{tab:XGB_bootstrapping_MAE}
\end{table}

\begin{table}[H]
    \scriptsize
    \centering
    \caption{$\text{R}^2$ bootstrap \glspl{CI} given for the \gls{XGB} model whose training is described in Section~\ref{sec:numerical_experiments}.}
    \begin{tabular}{l | ccc | ccc }
\toprule
& \multicolumn{3}{c}{Training $\text{R}^2$} &  \multicolumn{3}{c}{Test $\text{R}^2$} \\
& Low $0.14\%$ & Med $50.00\%$ & High $99.86\%$ & Low $0.14\%$ & Med $50.00\%$ & High $99.86\%$ \\
\midrule
$V_1$    & 0.9964   & 0.9972   & 0.9977   & 0.9617   & 0.9681   & 0.9739 \\
$V_2$    & 0.9799   & 0.9806   & 0.9813   & 0.9053   & 0.9069   & 0.9084 \\
$V_3$    & 0.9988   & 0.9996   & 0.9996   & 0.9827   & 0.9869   & 0.9905 \\
$V_4$    & 0.9914   & 0.9919   & 0.9922   & 0.9139   & 0.9172   & 0.9197 \\
$V_5$    & 0.9982   & 0.9987   & 0.9991   & 0.9721   & 0.9803   & 0.9838 \\
$V_6$    & 0.9873   & 0.9910   & 0.9940   & 0.9026   & 0.9237   & 0.9284 \\
$V_7$    & 0.9834   & 0.9837   & 0.9977   & 0.9355   & 0.9410   & 0.9452 \\
$V_9$    & 0.9958   & 0.9959   & 0.9994   & 0.9920   & 0.9937   & 0.9952 \\
$V_{10}$ & 0.9972   & 0.9974   & 0.9985   & 0.9749   & 0.9828   & 0.9896 \\
\midrule
\bottomrule
\end{tabular}
    \label{tab:XGB_bootstrapping_R2}
\end{table}

\newpage
\subsection{Python implementation}\label{sec:python_imple}
Here we provide the Python 3.12 implementation of Algorithms~\ref{alg:brute_force} and~\ref{alg:dynamic_programming} and an example problem instance.
The most up-to-date code should be accessed through the project's GitHub repository:
\url{https://github.com/SamuelWardPhD/Cleaning-Schedule-Optimization}

\begin{lstlisting}[style=custom,language=Python]
import itertools
import numpy as np
import pandas as pd
from typing import Callable, Sequence

def example_problem_instance():
    # Number of voyages
    n = 5

    # Initial level of biofouling (100 days)
    b0 = 100

    # Cost of cleaning at each port (USD)
    c = np.array((45_000, 44_000, 39_000, 36_000, 42_000))

    # Each voyage increases biofouling level by 25 days
    B = np.array((25, 25, 25, 25, 25))

    # Example DataFrame for each of the five voyages
    X = (
        # Voyage 1
        pd.DataFrame({
            "speed_through_water_kn": [8.5, 9.2, 10.0, 11.3, 12.1, 10.8, 9.7, 11.9, 13.0, 12.4],
            "draught_plus_wave_height_m": [9.8, 10.1, 10.0, 10.4, 10.6, 10.3, 10.2, 10.7, 10.9, 10.8]
        }),

        # Voyage 2
        pd.DataFrame({
            "speed_through_water_kn": [9.0, 9.6, 10.4, 11.0, 12.0, 12.3, 11.1, 12.7, 13.2, 12.8],
            "draught_plus_wave_height_m": [9.7, 9.9, 10.0, 10.1, 10.2, 10.1, 9.8, 10.3, 10.4, 10.2]
        }),

        # Voyage 3
        pd.DataFrame({
            "speed_through_water_kn": [8.2, 8.9, 9.5, 10.1, 10.7, 10.3, 9.6, 10.8, 11.5, 11.0],
            "draught_plus_wave_height_m": [10.0, 10.3, 10.5, 10.7, 10.8, 10.6, 10.4, 10.9, 11.0, 10.8]
        }),

        # Voyage 4
        pd.DataFrame({
            "speed_through_water_kn": [9.5, 10.2, 10.8, 11.6, 12.5, 12.0, 11.3, 12.9, 13.5, 13.0],
            "draught_plus_wave_height_m": [9.6, 9.8, 10.0, 10.1, 10.2, 10.0, 9.9, 10.3, 10.4, 10.2]
        }),

        # Voyage 5
        pd.DataFrame({
            "speed_through_water_kn": [8.8, 9.4, 10.1, 10.9, 11.7, 11.2, 10.5, 11.8, 12.6, 12.1],
            "draught_plus_wave_height_m": [9.9, 10.0, 10.2, 10.3, 10.5, 10.4, 10.1, 10.6, 10.7, 10.5]
        }),
    )

    # Regression function to predict voyage cost
    def f(_X, _b):
        fuel_cost_USD_per_kg = 0.493
        fuel_oil_consumption_kg = sum(
            _X['draught_plus_wave_height_m'][i]*(_X['speed_through_water_kn'][i]**2) + 36*_b
            for i in range(n)
        )
        return fuel_oil_consumption_kg*fuel_cost_USD_per_kg

    # Return a parameterization that can be passed to Algorithms 1 and 2
    return n, b0, c, X, B, f


def algorithm_1_brute_force_search(
        n: int,
        b0: float,
        c: Sequence,
        X: Sequence,
        B: Sequence,
        f: Callable,
        verbose=3,
):
    """
    :param n:          Number of voyages. Should be an integer greater than one;
    :param b0:         Initial value of the biofouling measure. Non-negative float;
    :param c:          Numpy array where c[i] is the cost of cleaning before voyage i;
    :param B:          Numpy array where B[i] is the measure of biofouling that accumulates for voyage i;
    :param X:          Array where X[i] is the profile of voyage i.  Each element may be a DataFrame or other object;
    :param f:          Function f which outputs the cost given a voyage profile;
    :param verbose:    Level of output. Set to zero to supress all print statements;
    """

    # Title
    if verbose:
        print("\n===== Brute Force =====")

    # The objective function of the integer program
    # sum_{i=1,...,n}( f(X[i], b[i]) + c[i]*z[i] )
    def objective(_z):
        b = np.zeros(n, dtype=float)
        count = b0
        for i in range(n):
            if _z[i] == 1:
                count = 0
            b[i] = count
            count += B[i]

        return sum((
            f(X[i], b[i]) for i in range(n)
        )) + sum((
            c[i]*_z[i] for i in range(n)
        ))

    # Initialise variables that will store the best solution found so far
    z_best = None
    obj_best = np.inf

    # Iterate through every combination of decision variables
    # I.e. (0, 0, ...) then (1, 0, ...) then (0, 1, ...) then (1, 1, ...) and so on
    for progress, z in enumerate(itertools.product((0, 1), repeat=n)):
        obj = objective(z)

        # Maintain the best solution so far
        if obj < obj_best:
            obj_best = obj
            z_best = z

        # Update
        if verbose >= 2 and (progress+1) in (((2**n)*p)//100 for p in range(20, 101, 20)):
            print(f"   {progress+1:>3} out of {2**n} combinations searched ({(progress+1)/(2**n):.1%})")

    # Output results to console
    if verbose:
        print(f"Optimal cleaning schedule z*:   {z_best}")
        print(f"Optimal object value ($cost):   {obj_best:.2f}")

    return z_best, obj_best


def algorithm_2_dynamic_cleaning_schedule_optimiser(
        n: int,
        b0: float,
        c: Sequence,
        X: Sequence,
        B: Sequence,
        f: Callable,
        b0_is_B0: bool = False,
        verbose=3,
):
    """
    :param n:          Number of voyages. Should be an integer greater than one;
    :param b0:         Initial value of the biofouling measure. Non-negative float;
    :param c:          Numpy array where c[i] is the cost of cleaning before voyage i;
    :param B:          Numpy array where B[i] is the measure of biofouling that accumulates for voyage i;
    :param X:          Array where X[i] is the profile of voyage i.  Each element may be a DataFrame or other object;
    :param f:          Function f which outputs the cost given a voyage profile;
    :param b0_is_B0:   Set this to true if B[0] = b0. Otherwise, we will set B = [b0] + B;
    :param verbose:    Level of output. Set to zero to supress all print statements;
    """

    # Validation
    assert len(c) >= n, f"Invalid parameterization: len(c)={len(c)} should be equal to n={n}."
    assert len(B) >= n, f"Invalid parameterization: len(B)={len(B)} should be equal to n={n}."
    assert len(X) >= n, f"Invalid parameterization: len(X)={len(X)} should be equal to n={n}."

    # Title
    if verbose:
        print("\n===== Dynamic Programming =====")

    # To understand the dynamic programing algorithm you must consider the following sub-problem
    # CLEANING[i,j] is the sub-problem of solving for the subset of variables ( z_i, ... , z_n ) given that z_j=1
    #
    # Phi[i,j] is the optimal objective value for CLEANING[i,j]
    # Psi[i,j] is the optimal cleaning schedule for CLEANING[i,j]
    Phi = np.zeros((n, n))
    Psi = [[tuple((0 for __ in range(i + 1))) for _ in range(n - i)] for i in reversed(range(n))]

    # We wish B[0]=b_0 to refer to the initial biofouling and B[1] to refer to the first voyage
    if not b0_is_B0:
        B = np.concatenate((np.array([b0]), B[:-1]))

    # Vector of zeros
    zero = 0 if len(B.shape) == 1 else np.zeros(B.shape[1])

    # Work backwards through the voyages
    for i in reversed(range(n)):

        if verbose >= 2:
            print(f"   --- Voyage {i + 1} ---")

        # The cost assuming we clean at voyage i is the sum of
        #   + The cost of cleaning C[i]
        #   + The fuel cost of the voyage with zero biofouling f(X[i], 0)
        #   + The optimal cost of future voyages' Phi[i + 1, i + 1]
        phi_clean = c[i] + f(X[i], zero) + (0 if i + 1 == n else Phi[i + 1, i + 1])
        z_clean = tuple((1,)) if i + 1 == n else (tuple((1,)) + Psi[i + 1][i + 1])

        # Assume that we last cleaned at before voyage
        for j in reversed(range(i + 1)):

            # The cost assuming we have not cleaned since voyage j
            #   + The fuel cost of the voyage with b biofouling f(X[i], b)
            #   + The optimal cost of future voyages' Phi[i + 1, j]
            b = np.sum(B[j:i + 1], axis=0)
            phi_fouled = f(X[i], b) + (0 if i + 1 == n else Phi[i + 1, j])
            z_fouled = tuple((0,)) if i + 1 == n else (tuple((0,)) + Psi[i + 1][j])

            if verbose >= 2:
                print("".join((
                    f"j: {j:<4}",
                    f"b: {np.sum(B[j:i + 1]):<6}",
                    f"phi_clean: {phi_clean:<10.2f}",
                    f"phi_fouled: {phi_fouled:<10.2f}",
                    f"B[{j}:i + 1]: {B[j:i + 1]}"
                )))

            # If it is optimal to clean, record so in tables Phi and Psi
            if phi_clean <= phi_fouled:
                Phi[i][j] = phi_clean
                Psi[i][j] = z_clean

            # If it is optimal to level the biofouling, record so in tables Phi and Psi
            else:
                Phi[i][j] = phi_fouled
                Psi[i][j] = z_fouled

    # Output results to console
    if verbose:
        print("   --- Results ---")
        print(f"Optimal cleaning schedule z*:   {Psi[0][0]}")
        print(f"Optimal object value ($cost):   {Phi[0][0]:.2f}")

    return Psi[0][0], Phi[0][0]


def main():
    # Get the parameters for the example problem instance
    n, b0, c, X, B, f = example_problem_instance()

    # Solve with brute force
    algorithm_1_brute_force_search(n, b0, c, X, B, f)

    # Solve with dynamic programing
    algorithm_2_dynamic_cleaning_schedule_optimiser(n, b0, c, X, B, f)


if __name__ == '__main__':
    main()

__author__ = __maintainer__ = "Samuel Ward"
__email__ = "s.ward@soton.ac.uk"
__credits__ = ["Samuel Ward", "Marah-Lisanne Thormann"]
__version__ = "7.0.2"

\end{lstlisting}


\newpage
\printglossary[type=\acronymtype, title=List of Abbreviations]
\printglossary

\end{document}